\theoremstyle{plain}
\newcommand{\newreptheorem}[2]{\newtheorem*{rep@#1}{\rep@title}\newenvironment{rep#1}[1]{\def\rep@title{#2 \ref*{##1}}\begin{rep@#1}}{\end{rep@#1}}}
\DeclarePairedDelimiter\abs{\lvert}{\rvert}%
\DeclarePairedDelimiter\norm{\lVert}{\rVert}%
\let\oldabs\abs
\def\abs{\@ifstar{\oldabs}{\oldabs*}}
\let\oldnorm\norm
\def\norm{\@ifstar{\oldnorm}{\oldnorm*}}
\theoremstyle{plain}
\newtheorem{theorem}{Theorem}[section]
\newtheorem{lemma}[theorem]{Lemma}
\newtheorem{corollary}[theorem]{Corollary}
\theoremstyle{remark}
\newtheorem{definition}[theorem]{Definition}
\newtheorem{remark}[theorem]{Remark}
\newcommand{\newtau}{h}
\newcommand{\Z}{\mathbb{Z}}
\newcommand{\R}{\mathbb{R}}
\newcommand{\N}{\mathbb{N}}
\newcommand{\Q}{\mathbb{Q}}
\newcommand{\eps}{\varepsilon}
 \newcommand{\1}{\mathbbm{1}}
\newcommand{\Prob}[1]{\mathbb{P}\left(#1\right)}
\newcommand{\Probbig}[1]{\mathbb{P}\big(#1\big)}
\newcommand{\ProbBig}[1]{\mathbb{P}\Big(#1\Big)}
\newcommand{\Exp}{\mathbb{E}}
\newcommand{\ud}{\mathrm{d}}
\DeclareMathOperator*{\argmin}{argmin}
\DeclareMathOperator*{\range}{Range}
\DeclareMathOperator*{\supp}{supp}
\newcommand{\ov}[1]{\newbar{#1}}
\newcommand\restr[2]{{
  \left.\kern-\nulldelimiterspace 
  #1 
  \vphantom{\big|} 
  \right|_{#2} 
  }}
\newcommand{\action}{A}
\newcommand{\maxPath}{\newbar{\gamma}}
\newcommand{\discr}[1]{d(#1)}
\newcommand{\spacevar}[1]{F(#1)}
\newcommand{\round}[1]{\left[ #1 \right]}
\newcommand{\paths}{\Gamma}
\newcommand{\rectAction}{\ov{A}}
\newcommand{\shape}{\Lambda}
\newcommand{\rectShape}{\Psi}
\newcommand{\cone}{\mathcal{K}}
\newcommand{\Ng}{\mathfrak{N}}
\newcommand{\Fc}{\mathcal{F}}
\newcommand{\Bc}{\mathcal{B}}
\newcommand{\Nc}{\mathcal{N}}
\newcommand{\Uc}{\mathcal{U}}
\newcommand{\Ic}{\mathcal{I}}
\newcommand{\Pp}{\mathbb{P}}
\newcommand{\E}{\mathbb{E}}
\newcommand{\cov}{\mathsf{cov}}
\newcommand{\probBi}{r}
\newcommand{\epf}{{{\hfill $\Box$ \smallskip}}}
\newcommand{\tj}{j}
\newif\ifshowcomments
\newcommand{\newbar}[1]{{#1}^*}
\author[1]{Yuri Bakhtin}
\author[2,3]{Konstantin Khanin}
\author[2,4]{Andr\'as M\'esz\'aros}
\author[2]{Jeremy Voltz}
\affil[1]{Courant Institute of Mathematical Sciences, New York University, USA}
\affil[2]{Department of Mathematics, University of Toronto, Canada}
\affil[3]{Beijing Institute of Mathematical Sciences and Applications (BIMSA), Beijing, China}
\affil[4]{HUN-REN Alfr\'ed R\'enyi Institute of Mathematics, Budapest, Hungary}
\title{Last Passage Percolation in a Product-Type Random Environment} 
\date{}
\begin{document}

\maketitle

\begin{abstract}
    We consider a last passage percolation model in dimension $1+1$ with potential given by the product of a spatial i.i.d.\ potential with symmetric bounded distribution and an independent i.i.d.\ in time sequence of signs. We assume that the density of the spatial potential near the edge of its support behaves as a power, with exponent $\kappa>-1$. 
    We investigate the linear growth rate of the actions of optimal point-to-point lazy random walk paths as a function of the path slope and describe the structure of the resulting shape function. It has a corner at $0$ and, although its restriction to positive slopes cannot be linear, we prove that it has a flat edge near~$0$ if $\kappa>0$.
   
    For optimal point-to-line paths, we study their actions and  locations of favorable edges that the paths tends to reach and stay at.
    Under an additional assumption on the time it takes for the optimal path to reach the favorable location, we prove that appropriately normalized actions converge to a limiting distribution that can be viewed as a counterpart of the Tracy--Widom law. Since the scaling exponent and the limiting distribution depend only on the parameter~$\kappa$, our results provide a description of a new universality class.

    \end{abstract}

\section{Introduction}
The term {\it directed polymers} refers to a class of models describing elastic chains interacting with their random environment. Such models have been intensively studied in the last 40 years.

A standard model in the discrete setting is based on nearest-neighbour random walk paths $\gamma_n(i), \, 0\leq i \leq n$, of length $n\in\N$ on the integer lattice 
$\Z^d$. In this paper, we will work with lazy walks, that is, $\Z^d$-valued paths satisfying  $\|\gamma(i+1)-\gamma(i)\|\leq 1$, where $\|\cdot\|$ is the Euclidean norm. It is natural to treat the argument $i$ as time. Although we give the basic definitions for a general $d$, later we will restrict our attention to the case $d=1$.

The random environment is given by a random potential $\Phi=(\Phi(x,i), \, x\in \Z^d, i\in \Z)$ which is time-dependent in general. 

The random 
energy or action accumulated by a path $\gamma_n$ from the environment is given by
\begin{equation}
\label{eq:action-general}
A_n(\Phi;\gamma_n)=\sum_{i=1}^n\Phi(\gamma_n(i),i).
\end{equation}

Given the inverse temperature $\beta>0$ and 
a realization of the random potential $\Phi$, one can define the polymer measure which is the probability distribution on lazy walk 
paths started at the origin, i.e., satisfying $\gamma_n(0)=0$,
by
\begin{equation*}
P_n(\Phi; \gamma_n)=\frac{1}{Z_n(\Phi;\beta)}\exp{\{-\beta A_n(\Phi;\gamma_n)\}},    
\end{equation*}
where  the random partition function 
$Z_n(\Phi;\beta)=\sum_{\gamma_n}\exp{\{-\beta A_n(\Phi;\gamma_n)\}}$ is a normalizing factor.

We are interested in the asymptotic
behavior as $n \to \infty$ of the polymer measure and of the 
partition function  $Z_n(\Phi;\beta)$ for typical realizations of the environment $\Phi$. In the zero temperature case where $\beta= +\infty $, the polymer measure is concentrated on the ground states, i.e.,
action-minimizing paths
$\newbar{\gamma}_n(\Phi)= \argmin_{\gamma_n} {A_n(\Phi, \gamma_n)}$.  These minimizers and their actions can be viewed as a last passage percolation (LPP) model.\footnote{A note on terminology: One could argue that since we want to minimize the action, it would be more appropriate to refer to our model as a first passage percolation model. However, since time is directed in our case, we decided to go with the name last passage percolation which is used more often for directed models.} 
One often makes assumptions on the distribution of $\Phi$ ensuring that
for almost every realization of $\Phi$ and for all $n\in\N$, there is a unique path minimizing $A_n(\Phi,\cdot)$ among those starting at $0$. Thus, we are interested in asymptotic geometrical properties of minimizers $\newbar{\gamma}_n=\newbar{\gamma}_n(\Phi)$ and their actions $\newbar{A}_n=A_n(\Phi; \newbar{\gamma}_n)$.

The setting where  the random variables $\Phi(x,i)$ are i.i.d.\ is the most studied in
the mathematical and physical literature. The case $d=1$ corresponding to the KPZ phenomenon is of special interest. It is predicted 
that under very mild conditions on the distribution of $\Phi(x,i)$, the asymptotic statistical properties of optimal  paths and their actions are universal.
Moreover, the same universal behavior is expected for positive and zero temperature. For zero temperature this, in particular, means that there exist nonrandom constants $A, \newtau_1$ such that as $n \to \infty $
$$ \frac{\newbar{A}_n -An}{\newtau_1 n^{1/3}} \stackrel{d}{\longrightarrow}TW_{GOE},$$
where $TW_{GOE}$ is the Tracy--Widom distribution for the Gaussian Orthogonal Ensemble. Furthermore, for some positive constant $\newtau_2$, the distribution of 
$$\frac{\newbar{\gamma}_n(\Phi)}{\newtau_2n^{2/3}}$$  also converges to a universal law. The area of KPZ was extremely active in the last 20 years (\cite{PR_SP, AQC, BOR, SEP, DOT, PD, MQR, DV}). The literature on KPZ phenomenon is huge. Here we just cited 
a few papers where the reader can find other references.
We also want to point out that most of the results are related to
exact formulas for particular models, that is for a particular
distribution of the disorder potential $\Phi$. At the same time,
the problem of universality remains largely open. 

In this paper, we consider $d=1$ and a random potential $\Phi$ of a different type, given by
\[\Phi(x,i)=F(x)B(i),\]
where
 $F=(F(x))_{x \in \Z}$ and $B=(B(i))_{i\in\Z}$ 
 are two mutually independent families of i.i.d.\ random variables.
The action~\eqref{eq:action-general} then can be rewritten as
\begin{equation}
\label{eq:action-product}
A_n(B,F;\gamma_n)=\sum_{i=1}^n F(\gamma_n(i))B(i).
\end{equation}

Additionally, we assume that random variables $B(i)$ take values $\pm1$ with probabilities~$1/2$ and random variables
$F(x)$ have a density $\varrho(x)$ which is a positive, even and continuous function on an open interval $(-c,c)$ which vanishes outside of this interval:
\begin{align}\nonumber
& \varrho(x)=\varrho(-x), \, \varrho(x)>0 \, &\text{ for } x \in (-c,c), \\& \varrho(x)=0 &\text{ for } x \notin (-c,c),\label{p(x)} \\ 
& \lim_{x\to c} \frac{\varrho(x)}{|c-x|^\kappa}=q, &\text{ where } q>0, \quad -1< \kappa.\nonumber
\end{align}

Note that since $\varrho$ is even, we also have $\lim_{x\to -c} \frac{\varrho(x)}{|c+x|^\kappa}=q$.

The exponent $\kappa$ determines the asymptotic behavior of $\varrho(x)$ near the boundary of its support.  The condition $\kappa>-1$ is necessary, because the density needs to be integrable. For $\kappa>0$, the density $\varrho(x)$ is a continuous function on $\R^1$.

There are several motivations for considering such a random environment.
From the physics point of view the model corresponds to a spatially disordered media which interacts with external time-dependent forces.
The product structure corresponds to the situation where the spatial correlation length of the time-dependent external forces is much larger than the correlation length of the spatially disordered media.

We are also motivated by the comparison of our model with the models from the KPZ universality class. Our model is simpler and very flexible since it allows for a broad class of densities $\varrho$. We will see that the large scale properties of our model depend only on the parameter $\kappa$, see~\eqref{p(x)}, thus defining a new universality class. 
Some of these large scale properties are drastically different from the KPZ behavior.  
This concerns, for example, the regularity properties of the shape functions characterizing the dependence of the linear rate of growth of action of optimal point-to-point paths on the path slope. For models in the KPZ universality class, the shape functions conjecturally have no corners or flat edges.   Differentiability of shape functions for a class of LPP-type models in continuous space was recently proved in~\cite{bakhtin-dow-2023-1}, \cite{bakhtin-dow-2023-2}.
 In contrast,  the shape function for our model is not smooth, with a corner at $0$ (this feature is more typical for First Passage Percolation models with static random environment~\cite{haggstrom1995asymptotic}, or  models with a columnar defect \cite{basu2014last,ahlberg2016inhomogeneous}). In addition, 
it has two flat edges near $0$ but there is a point $\alpha_c$ separating  
the linear behaviour from nonlinear one. This point $\alpha_c$ can be viewed as a point of phase transition.

Additionally, the product structure
of the random potential allows to study separately the statistical properties with respect to the processes $F$ and $B$. For example,
it is interesting to study statistics with respect to $B$ for a fixed typical realization of the space component $F$. In this case the problem of finding action-minimizing paths has similarity with problems in the theory of stochastic optimal control. We believe that studying the rich
structure corresponding to the product-type random environment is an interesting problem on its own.

The paper has the following structure.
In Section~\ref{secresults}, we formulate the main results and introduce a few notations.
In Section~\ref{secbasic} and Section~\ref{seclinear}, the results related to the behavior of the shape function are proven.
Section~\ref{secpoisson} contains the proof of convergence to the Poisson process for
the rescaled discrepancies. 
In Section~\ref{sec:properties_of_the_maximizing_path}, we prove the properties the optimal path with free endpoint listed in Theorem~\ref{thm:free}.  
In Section~\ref{sec:distributional_convergence}, we prove conditional result on the limiting distribution for the rescaled optimal action.
Finally, in Section~\ref{secopenproblems}, we discuss open problems and related conjectures.

\section{Formulation of the main results}\label{secresults}
Throughout the paper, we work with a probability space
$(\Omega,\Fc,\Pp)$ supporting 
random variables  $F=(F(x))_{x \in \Z}$ and $B=(B(i))_{i\in\Z}$ with properties described in the Introduction.

Recalling the definition of the action $A_n$ in \eqref{eq:action-product}, 
for $n\in\N$ and $k\in\{-n,-n+1,\ldots,n\}$, we denote
$$\newbar{A}(B,F; n,k) = \min_{\gamma_n: \gamma_n(n)=k} A_n(B,F;\gamma_n).$$

One of the main objects in this paper is the shape function characterizing the rate of linear growth of $\newbar{A}(B,F; n,k)$ as $n\to\infty$ when the endpoint 
$(n,k)=(n,x_n)=(n,[\alpha n])$ is taken to infinity along a line with slope $\alpha\in[-1,1]$. Here $[\cdot]$ denotes the integer part. The existence of such linear rates is standard in the theory of disordered systems. More precisely, the following result is an immediate consequence of Kingman's subadditive ergodic theorem.

\begin{theorem}\label{shape function} There is a continuous, even and concave deterministic function $\shape:[-1,1]\to\R_+$  and an event $\Omega_0\in\Fc$ with $\Prob{\Omega_0}=1$ such that for all $(B,F)\in\Omega_0$ and all $\alpha\in[-1,1]$, we have
\begin{equation}
\label{eq:def_shape_f}
\frac{\newbar{A}(B,F;n,[\alpha n])}{n} \to -\shape(\alpha).
\end{equation}
\end{theorem}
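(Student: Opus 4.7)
The plan is to view $-\newbar{A}(B,F;n,k)$ as a superadditive quantity in the endpoint $(n,k)$, apply Kingman's subadditive ergodic theorem along each direction of rational slope, and then use concavity together with a simple Lipschitz estimate in $k$ to extend to all $\alpha\in[-1,1]$ on a single almost sure event.

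Fix a rational slope $\alpha=p/q\in[-1,1]$ and restrict to $n=qm$, so that $\alpha n\in\Z$. For any $0<n_1<n$, concatenating an optimal path from $(0,0)$ to $(n_1,\alpha n_1)$ with an optimal path from $(n_1,\alpha n_1)$ to $(n,\alpha n)$ in the shifted environment yields an admissible lazy-walk path (the restriction $|\alpha|\le 1$ ensures admissibility), so
\[
\newbar{A}(B,F;n,\alpha n)\le\newbar{A}(B,F;n_1,\alpha n_1)+\newbar{A}\bigl(\theta_{(n_1,\alpha n_1)}(B,F);n-n_1,\alpha(n-n_1)\bigr),
\]
where $\theta_{(s,y)}$ denotes the joint time-and-space shift, which is measure-preserving on the product law of $(B,F)$ and ergodic through its time component. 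Hence $X_{i,j}:=-\newbar{A}(\theta_{(qi,\alpha qi)}(B,F);q(j-i),\alpha q(j-i))$ is stationary, superadditive, and uniformly bounded, $|X_{i,j}|\le cq(j-i)$, since $|F|\le c$ and $|B|\le 1$. Kingman's theorem then produces a deterministic constant $\shape(\alpha)$ with $\newbar{A}(B,F;qm,\alpha qm)/(qm)\to -\shape(\alpha)$ almost surely and in $L^1$. An elementary Lipschitz bound, obtained by modifying the last few steps of an optimal path (each of which contributes action bounded by $c$), gives $|\newbar{A}(B,F;n,k)-\newbar{A}(B,F;n,k')|\le 2c|k-k'|$ whenever both endpoints are admissible, and lets one replace $qm$ by a general $n$ and $\alpha qm$ by $[\alpha n]$ without affecting the limit.

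Applying the same concatenation inequality at two rational slopes $\alpha_1,\alpha_2$ with rational weight $\lambda\in[0,1]$ and passing to the limit yields $\shape(\lambda\alpha_1+(1-\lambda)\alpha_2)\ge\lambda\shape(\alpha_1)+(1-\lambda)\shape(\alpha_2)$, so $\shape$ is concave on $\Q\cap[-1,1]$ and extends uniquely to a continuous concave function on all of $[-1,1]$. Continuity at the endpoints $\pm 1$, where only the straight path is admissible, is verified directly by noting that $\newbar{A}(B,F;n,\pm n)=\sum_{i=1}^n F(\pm i)B(i)$ is a sum of i.i.d.\ mean-zero bounded terms, so the strong law forces $\shape(\pm 1)=0$. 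Evenness of $\shape$ follows from the symmetry of $\varrho$: under reflection $\gamma\mapsto -\gamma$, the environment $(F(x))$ is replaced by $(F(-x))$, which has the same joint law, so $\newbar{A}(B,F;n,k)\stackrel{d}{=}\newbar{A}(B,F;n,-k)$. Nonnegativity $\shape\ge 0$ follows from the upper bound $\newbar{A}(B,F;n,[\alpha n])\le\sum_{i=1}^n F([\alpha i])B(i)$, which, conditionally on $F$, is a sum of independent bounded mean-zero terms whose normalized limit is $0$ by Kolmogorov's strong law.

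Finally, the single full-measure event $\Omega_0$ is obtained by intersecting the countably many Kingman events at rational slopes. On this intersection one has convergence for every rational $\alpha$; for general $\alpha\in[-1,1]$ one sandwiches $[\alpha n]$ between $[\alpha^\pm n]$ for rationals $\alpha^-<\alpha<\alpha^+$, applies the Lipschitz bound together with the already established continuity of $\shape$, and sends $\alpha^\pm\to\alpha$. The main technical point I expect to spend some care on is precisely this uniform-in-$k$ extension, since Kingman's theorem as stated produces an almost sure event depending on the direction, whereas the statement demands simultaneous convergence for all $\alpha\in[-1,1]$ on one and the same event.
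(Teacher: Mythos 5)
You are right that the skeleton (Kingman along rational directions, then a sandwich to get all slopes on one full-measure event) is the paper's strategy, but two of your bridging steps do not hold as stated. The central one is the claimed deterministic bound $|\newbar{A}(B,F;n,k)-\newbar{A}(B,F;n,k')|\le 2c|k-k'|$ ``by modifying the last few steps of an optimal path''. If the optimal path $\gamma$ to $(n,k)$ approaches $k$ at full speed from below, i.e.\ $\gamma(n-j)=k-j$ for $j=0,\dots,L$, then no path ending at $k'>k$ can agree with $\gamma$ after time $n-L$: from position $k-j$ at time $n-j$ the site $k'$ is simply unreachable. Any surgery must start before this final climb, whose length $L$ is not controlled by $|k-k'|$, and the naive estimate then produces an error of order $L$ rather than $|k-k'|$. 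Even the constant is wrong in trivial cases: for $n=2$, $F(0)\approx-c$, $F(1)\approx c$, $F(2)\approx -c$, $B=(+1,-1)$ one finds $\newbar{A}(2,2)-\newbar{A}(2,1)\approx 4c$. Whether a fixed-time spatial Lipschitz bound with a constant independent of $n$ is true at all is a genuinely nontrivial question, and you rely on it twice: to pass from the arithmetic subsequence $n=qm$ to all $n$, and to sandwich irrational slopes between rational ones. The paper avoids this entirely by comparing endpoints through the \emph{time} direction: the two-parameter quantity satisfies \eqref{eq:action_bound}, and the approximating space-time points are chosen with the cone conditions of Appendix~\ref{sec:appendix}, so the comparison error is $c$ times a short time increment, hence $o(n)$ as the approximating slopes converge.

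The second gap is continuity at the endpoints $\pm1$. Your direct computation along the straight path only identifies the value $\shape(\pm1)=0$; it does not show $\lim_{\alpha\to 1^-}\shape(\alpha)=0$, and a concave function on $[-1,1]$ can perfectly well jump at the boundary (take $\equiv c$ on $(-1,1)$ and $0$ at $\pm1$), so your ``unique continuous concave extension'' from rational slopes is not justified at $\pm1$. This is precisely where the paper spends real effort: Lemma~\ref{lem:shape-continuous} proves $\shape(1-\varepsilon)\le c\varepsilon+(1-\varepsilon)\lambda(\varepsilon)$ by balancing a path-counting (entropy) estimate against Hoeffding's inequality, and nothing in your outline plays that role. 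A smaller but genuine issue of the same kind occurs at $\alpha=0$: there the shift you invoke is the time shift crossed with the identity on $F$, which is not ergodic (every block sees the same $F$-window around the origin), so Kingman alone yields only a possibly random limit; the paper proves ergodicity at nonzero slopes via a decoupling lemma that uses the nonzero spatial shift and treats $\alpha=0$ separately by an explicit construction (Lemma~\ref{lem:shape-at-0}).
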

We prove this theorem in Section~\ref{sec:shape-existence}.
The function $\shape$ provided by this theorem is called the shape function. The minus sign in~\eqref{eq:def_shape_f} is chosen to ensure that $\Lambda$ is nonnegative.

It is easy to see that any action-minimizing path tends to stay for a long time at the endpoints of special edges $\{x,x+1\}, \, x \in \Z^1$.
Such special edges have the following property: $F(x)$ and 
$F(x+1)$ are close to $c$ and $-c$, and have different signs. The path
$\gamma$ visits points $x,\, x+1$ intermittently choosing one of them at each time $i$ according to the sign of $B(i)$.

\begin{definition}
\label{def:discrepancy}
For each $x \in \Z$, we define the {\it discrepancy} of the edge $\{x , x + 1\}$ to be
		\begin{equation}
		\label{eq:discrepancy_definition} 
			d(x) = 2c - \abs{\spacevar{x+1} - \spacevar{x}}.
		\end{equation}
\end{definition}

Since there are edges with arbitrary small discrepancy, it is easy to 
see that $\shape(0)=c$. 
On the other hand, $\shape(1)=0$ since for $\alpha=1$ there exists only one path $\gamma_n$ and its action is of the order of $n^{1/2}$. Our next result shows that $\shape(\alpha)$ has a corner at $0$ and that, its restriction to $[0,1]$ is a concave
nonlinear function.

\begin{theorem}\label{shape function non-linear}
The following relations hold for the shape function $\shape$:
\begin{align}
\shape(0)&=c,\label{eq:shape-at-0}\\
\shape(1)&=\shape(-1)=0, \label{eq:shape-at-1}\\
\shape(\alpha)&> c(1-|\alpha|), \quad 0<|\alpha| < 1.\label{eq:nonlinearity}
\end{align}
In addition, there is a constant $K>0$ such that
	\begin{align}
		\label{eq:rect_shape_bound1}
		\shape(\alpha) \le c - K|\alpha|,\quad \alpha\in [-1,1],
	\end{align}
 i.e., $\shape$ has a corner at  $\alpha=0$.
\end{theorem}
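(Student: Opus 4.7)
I treat the four statements in turn, leaving the strict inequality \eqref{eq:nonlinearity} for last, since it is the most delicate of the four.

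For \eqref{eq:shape-at-0}, the upper bound $\shape(0)\le c$ follows at once from $|F(\cdot)B(\cdot)|\le c$. For the matching lower bound, I fix $\eps>0$ and note that, almost surely, there is an edge $\{x_0,x_0+1\}$ within finite distance of the origin with discrepancy smaller than $\eps$; the path that walks to $x_0$ in $O(1)$ steps and thereafter, at each step $i$, chooses $\gamma(i)\in\{x_0,x_0+1\}$ so that $\sign F(\gamma(i))=-\sign B(i)$ earns action at most $-(c-\eps/2)n+O(1)$, giving $\shape(0)\ge c-\eps/2$. For \eqref{eq:shape-at-1}, the lazy-walk constraint together with $\gamma(n)=\pm n$ forces $\gamma(i)=\pm i$, so $A_n=\sum_{i=1}^n F(\pm i)B(i)$ is a sum of i.i.d.\ bounded centered variables, and $A_n/n\to 0$ a.s.\ by the SLLN.

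For the corner bound \eqref{eq:rect_shape_bound1}, the plan is to derive a uniform lower bound on the action of any slope-$\alpha$ path. Writing $A_n=\sum_x F(x)S(x)$ with $S(x)=\sum_{i:\gamma(i)=x}B(i)$ and using $|S(x)|\le N(x):=|\{i:\gamma(i)=x\}|$ together with $\sum_x N(x)=n$,
\begin{equation*}
|A_n|\;\le\;\sum_x|F(x)|N(x)\;=\;cn-\sum_x(c-|F(x)|)N(x)\;\le\;cn-\sum_{x\in V}(c-|F(x)|),
\end{equation*}
where $V$ denotes the range of $\gamma$, which is a discrete interval by laziness and connectedness. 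Since $\gamma(0)=0$ and $\gamma(n)=\lfloor\alpha n\rfloor$, every integer between $0$ and $\lfloor\alpha n\rfloor$ lies in $V$, so by the SLLN for the i.i.d.\ nonnegative variables $c-|F(x)|$,
\begin{equation*}
\sum_{x\in V}(c-|F(x)|)\;\ge\;\sum_{x=0}^{\lfloor|\alpha|n\rfloor}(c-|F(x)|)\;=\;\mu|\alpha|n+o(n)\quad\text{a.s.},
\end{equation*}
with $\mu:=\E[c-|F|]>0$. Hence $\newbar{A}(n,\lfloor\alpha n\rfloor)\ge-(c-\mu|\alpha|)n+o(n)$ a.s., which proves \eqref{eq:rect_shape_bound1} with $K=\mu$.

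The strict inequality \eqref{eq:nonlinearity} is the subtle part. Concavity of $\shape$ and the boundary values yield the weak bound $\shape(\alpha)\ge c(1-|\alpha|)$, and by concavity it suffices to exhibit a single $\alpha_0\in(0,1)$ at which $\shape(\alpha_0)>c(1-\alpha_0)$. The naive strategy of alternating at one good edge for time $(1-\alpha_0)n$ and then walking straight to the endpoint only saturates the chord, so the construction must use the time budget more cleverly. My plan is to exhibit a path that combines alternation at a near-extremal edge (contributing rate $-(c-\eps)$) with adaptively-timed moves during the transitional segments: the direction of each non-zero step is chosen using the observed sign of $B(i)$ and the realized values of $F$ at the two adjacent sites, so that each such adaptive move contributes on average $-\E|F|<0$ rather than the null expected contribution of a deterministic straight walk. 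Balancing the alternation time and the adaptive-move time so that the overall slope equals $\alpha_0$ produces an overall rate strictly below $-c(1-\alpha_0)$. The main obstacle, which I expect to be the hardest part, is to show that this adaptive gain accumulates to a genuine $\Theta(n)$ correction and is not lost to the lazy-walk constraint or to the need to synchronize with the good edges used for alternation; this is precisely where the density assumption \eqref{p(x)} and the positivity of $\E|F|$ come into play.
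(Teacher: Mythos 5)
Your treatments of \eqref{eq:shape-at-0}, \eqref{eq:shape-at-1} and \eqref{eq:rect_shape_bound1} are essentially correct. For \eqref{eq:shape-at-0} note only that the endpoint constraint $\gamma_n(n)=0$ forces you to append an $O(1)$-step return from the good edge to the origin (as the paper does), which changes the action by $O(1)$ and nothing else. Your corner bound is a pleasant pathwise variant of the paper's argument: the paper lower-bounds the action via the first-visit times of the sites $1,\dots,[\alpha n]$ and passes to expectations, while you use $|S(x)|\le N(x)$, $\sum_x N(x)=n$ and the nonnegativity of $c-|F(x)|$ to get an almost sure bound; both routes give the same constant $K=c-\Exp|F(0)|$.

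The genuine gap is \eqref{eq:nonlinearity}. Your reduction by concavity to producing one $\alpha_0\in(0,1)$ with $\shape(\alpha_0)>c(1-\alpha_0)$ is correct (and is how the paper concludes), but the construction achieving this is exactly what you leave open, and the heuristic you propose is insufficient as stated. If the adaptive moves gain $-\Exp|F|$ on average but are paid for with extra steps whose contribution is only mean zero, you do not beat the chord: every extra transit step displaces an alternation step worth about $-c$, so the scheme ``wait at a typical site until $B$ is favorable, then step'' gains $\Exp|F|$ per site while paying on average one displaced alternation step, i.e.\ a cost $c>\Exp|F|$, and a short computation (e.g.\ at slope $1/4$ with $c=1$, $\Exp|F|=1/2$, the total rate is $-5/8$ versus the chord value $-3/4$) shows it lands strictly on the wrong side of the chord. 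To win, the inserted steps themselves must earn close to $-c$, which forces you to pause only at sites where $|F|$ is near $c$ and only when the current sign of $B$ is aligned against it. This is precisely what the paper does: walking ballistically rightward, it inserts a single extra step at a pair with $F(i-1)<-\tfrac34 c$ and $F(i)>\tfrac34 c$ only when the next sign is $+1$; a conditional-expectation computation gives an expected gain of at least $\tfrac34 c$ per such pair against an expected time overhead of $\tfrac12$ step (charged at rate $c$), and Hoeffding's inequality, applied separately to the even- and odd-indexed increments to recover independence despite the adaptive choices, upgrades this to an almost sure $\Theta(n)$ gain, yielding $\shape(1/\ell)>c(1-1/\ell)$ for an explicit $\ell>1$. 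Until your proposal supplies a construction with this kind of quantitative accounting (gain per inserted step strictly exceeding the $c$ the chord charges for it) together with a concentration argument handling the dependence created by adaptivity, statement \eqref{eq:nonlinearity} — the most delicate part of the theorem — remains unproven.
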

Relations~\eqref{eq:shape-at-0}, \eqref{eq:shape-at-1}, \eqref{eq:rect_shape_bound1}
are established in Section~\ref{sec:shape-existence}. We prove the nonlinearity property~\eqref{eq:nonlinearity} in Section~\ref{sec:nonlinearity}.

However, for small $\alpha$, the shape function is linear on both sides of $0$. Namely, there is an interval around 
$\alpha=0$ on which identity holds in place of inequality~\eqref{eq:rect_shape_bound1}.
\begin{theorem}\label{shape function linear}
Assume that the exponent $\kappa$ in \eqref{p(x)} is positive.
Then, there exists $\alpha_0\in(0,1)$ such that
\[
\shape(\alpha)=c - K|\alpha|,\quad \alpha\in[-\alpha_0, \alpha_0],
\]
where $K=(c-\shape(\alpha_0))/\alpha_0$.
\end{theorem}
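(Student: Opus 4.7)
The plan is to identify a specific $\alpha_0 \in (0,1)$ at which the concavity lower bound on $\shape$ is tight, and then leverage concavity to extend linearity across the whole interval $[-\alpha_0, \alpha_0]$. Let $K_* := -\shape'(0^+) = \inf_{\alpha>0}(c - \shape(\alpha))/\alpha$; the concavity of $\shape$ from Theorem~\ref{shape function} combined with Theorem~\ref{shape function non-linear} guarantees that $K_*$ is well-defined and strictly positive (since the ratio $(c-\shape(\alpha))/\alpha$ is nondecreasing in $\alpha>0$ by concavity, and bounded below by the $K$ of Theorem~\ref{shape function non-linear}). If we can exhibit $\alpha_0 > 0$ satisfying $\shape(\alpha_0) \geq c - K_* \alpha_0$, the reverse inequality follows immediately from the definition of $K_*$, giving $\shape(\alpha_0) = c - K_* \alpha_0$. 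Then for $\alpha \in [0, \alpha_0]$, the chord inequality for concave functions yields $\shape(\alpha) \geq (1-\alpha/\alpha_0) c + (\alpha/\alpha_0)\shape(\alpha_0) = c - K_* \alpha$, while the infimum characterization of $K_*$ gives $\shape(\alpha) \leq c - K_* \alpha$; combining these, and invoking evenness of $\shape$, yields $\shape(\alpha) = c - K |\alpha|$ on $[-\alpha_0,\alpha_0]$ with $K = K_* = (c - \shape(\alpha_0))/\alpha_0$.

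The central step is therefore to construct, for some specific $\alpha_0 > 0$, a sequence of paths $\gamma_n$ from $(0,0)$ to $(n, \lfloor \alpha_0 n \rfloor)$ with $\limsup_{n\to\infty} A_n(B,F;\gamma_n)/n \leq -(c - K_*\alpha_0)$ almost surely; by the variational definition of the shape function, this forces $\shape(\alpha_0) \geq c - K_* \alpha_0$. The hypothesis $\kappa > 0$ enters through the density of favorable edges: $\Pp(d(x) \leq \delta)$ scales as $\delta^{2\kappa+2}$ as $\delta \to 0$, so a Borel--Cantelli argument shows that in any interval of length $L$ one can almost surely find an edge with discrepancy at most of order $L^{-1/(2\kappa+2)}$. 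The candidate path has three parts: first, a short traversal from the origin to the location $x^*$ of a very favorable edge $\{x^*,x^*+1\}$ within $o(n)$ of the origin; second, a long sitting phase at this edge, contributing asymptotic action $-c\cdot (\text{sit duration}) + o(n)$ because $d(x^*) \to 0$; third, a ``flight'' of duration $T$ and slope $\gamma = \alpha_0 n/T$ reaching $(n, \lfloor \alpha_0 n\rfloor)$, whose action is close to $-\shape(\gamma) T$ by applying Theorem~\ref{shape function} (Kingman's subadditive theorem) to the flight segment, viewed as an independent instance of the same LPP model shifted in time. Summing yields total asymptotic action $-cn + \alpha_0 n \cdot (c - \shape(\gamma))/\gamma + o(n)$.

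The main obstacle is to make the per-displacement flight cost $(c - \shape(\gamma))/\gamma$ attain $K_*$ at some macroscopic $\alpha_0 > 0$, not merely converge to $K_*$ in the limit $\gamma \to 0^+$. The simple single-flight construction above, with $\gamma \in [\alpha_0, 1]$ forced by the feasibility constraint $T \leq n$, only reproduces the concavity lower bound and cannot be sharp at any fixed $\alpha_0 > 0$ unless $\shape$ is already known to be linear there. The remedy, using $\kappa > 0$ in an essential way, is a multi-scale construction: subdivide the required displacement $\alpha_0 n$ into many shorter flights between successively located favorable edges of carefully decreasing discrepancy, tuning the slope of each sub-flight so that the aggregate per-displacement cost stabilizes at exactly $K_*$ rather than at the strictly larger value $(c - \shape(\gamma))/\gamma$ for any $\gamma$ bounded away from $0$. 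Controlling the sum of errors arising from (a) the traversals between consecutive favorable edges, (b) the subadditive fluctuations on each sub-flight, and (c) the sublinear losses from sitting at edges of nonzero discrepancy, is the principal technical hurdle, and relies crucially on the quantitative density and concentration estimates for the favorable-edge process made available by the assumption $\kappa > 0$.
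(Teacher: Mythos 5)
Your reduction via concavity is sound: it suffices to produce one $\alpha_0>0$ with $\shape(\alpha_0)\ge c-K_*\alpha_0$, where $K_*=-\shape'(0^+)$, and chords then give linearity on $[-\alpha_0,\alpha_0]$. The gap is in the central step, and it is not merely a missing technicality. In your construction every macroscopic sub-flight of slope $\gamma_j$ and duration $\Delta t_j$ is charged (at best) $-\shape(\gamma_j)\Delta t_j+o(n)$ — and this is unavoidable, since the true action over that time window is bounded below by the point-to-point optimum, which is $-\shape(\gamma_j)\Delta t_j(1+o(1))$ — while sitting phases are charged at best $-c\cdot(\text{duration})+o(n)$. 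With total time $n$ and total displacement $\alpha_0 n$, the best aggregate you can ever certify is the value of the optimization problem: minimize $\sum_j \Delta x_j\,(c-\shape(\gamma_j))/\gamma_j$ subject to $\sum_j\Delta x_j=\alpha_0 n$ and $\sum_j\Delta x_j/\gamma_j\le n$. Writing $u=1/\gamma$, the function $u\mapsto u\bigl(c-\shape(1/u)\bigr)$ is convex (it is $cu$ minus the perspective of the concave function $\shape$) and nonincreasing, so by Jensen the minimum is attained at the single slope $\gamma=\alpha_0$ and equals $(c-\shape(\alpha_0))/\alpha_0$ per unit displacement. Hence no multi-scale tuning of sub-flight slopes can make the aggregate per-displacement cost ``stabilize at exactly $K_*$'': it is always at least $(c-\shape(\alpha_0))/\alpha_0$, which equals $K_*$ only if the theorem already holds at $\alpha_0$. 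Your remedy is therefore circular in exactly the same way as the single-flight version you correctly rejected; escaping the circle would require new estimates at mesoscopic scales that are not sketched and cannot be obtained by black-boxing sub-flights through the shape function.

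A further warning sign is where $\kappa>0$ enters your argument. The availability of an edge of discrepancy $O(L^{-1/(2\kappa+2)})$ in an interval of length $L$ (Lemma~\ref{lem:discrep_density_near_0} plus Borel--Cantelli) holds for every $\kappa>-1$, so your accounting, if it worked, would prove a flat edge for all $\kappa>-1$, contradicting the discussion in Section~\ref{secopenproblems} where the flat edge is conjectured to fail for $\kappa$ close to $-1$. The paper's proof uses a completely different mechanism: start from the \emph{optimal} path for the small slope $1/\ell$ on time horizon $\ell n$, surgically remove excursions (``loops'') through sites whose modified discrepancy $d^*$ is not too small (Section~\ref{secloops}), and compare the action of the projected path in the time-projected sign environment with $\mathbb{E}_B\newbar{A}$ via McDiarmid's inequality combined with a union bound over all admissible projection sets $\mathcal{U}_m$ (Lemmas~\ref{lemmaMcDiarmid}, \ref{lemmaunionconc}, \ref{lemmacomb}). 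There $\kappa>0$ (after reducing to $\kappa<1/3$) is used quantitatively, through Lemma~\ref{lemmafchoice}, to bound the \emph{number} of sites with small $d^*$ so that the gain $\sum_j e_j d^*(v_j)$ from the removed loops beats the entropy/concentration cost $(m/n)^{1/2-\kappa/4}\ell^{1/2+\kappa/4}$. Your proposal contains no counterpart of this surgery-plus-concentration step, and that is precisely the missing idea.
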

We prove this theorem in Section~\ref{seclinear}.

\begin{remark}
Theorem~\ref{shape function linear} is the only result where we need to assume that $\kappa>0$. All other results are true for all $\kappa>-1$. We expect that condition $\kappa>0$ can be extended for $\kappa> -\kappa_0$ for some $0<\kappa_0<1$. 
However, we conjecture that for $\kappa$ close to $-1$ the shape function does not have a linear piece (see discussion in Section 8).
\end{remark}

\begin{figure}
        \centering
    \includegraphics[width=5in]{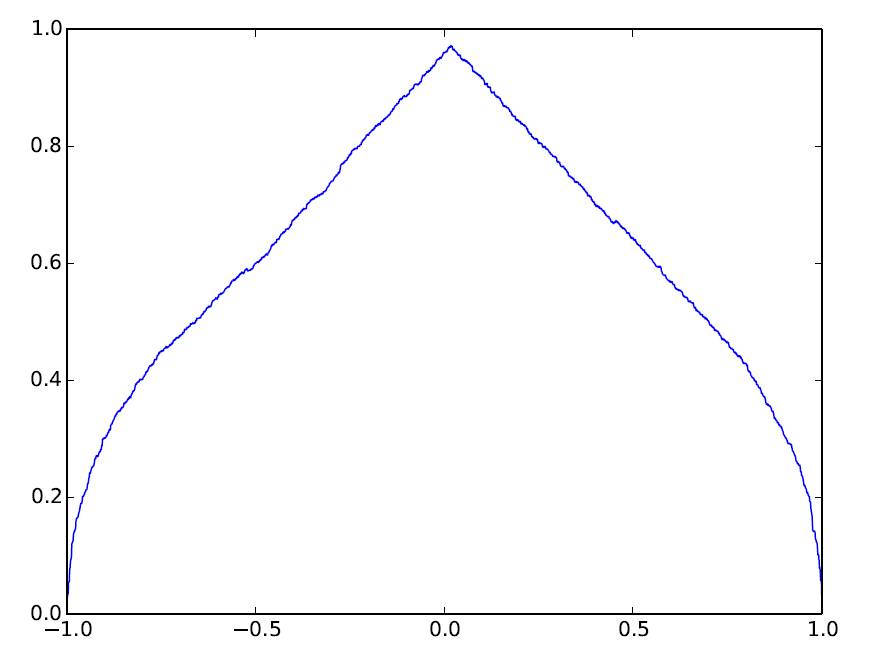}
        \caption{A simulation of the shape function $\shape$ for $\varrho=\frac{1}2 \mathbbm{1}_{(-1,1)}$. Note that although this density is not covered by Theorem~\ref{shape function linear}, $\shape$ seems to have a linear piece, see Section~\ref{secopenproblems} for further discussion.}
        
\end{figure}

We now discuss the properties of the optimal path $\newbar{\gamma}_n$ with a free endpoint $\newbar{\gamma}_n(n)$.
Above we indicated that with large probability the path $\newbar{\gamma}_n$
will reach a very favourable location, that is the edge $e_n=\{\ell_n,\ell_{n}+1\}$ with a small discrepancy $d_n=d(\ell_n)$, and then will stay at this edge for all remaining time. Let $\tau_n$ be the first time $\newbar{\gamma}_n$ visits $\ell_n$ or $\ell_{n}+1$. To understand how far this favourable location will be located, 
we notice that there are two main contributions increasing the minimal
action above the value $-cn$. The first contribution is coming  from the steps before time $\tau_n$, and it is of the order of $|\ell_n|$.
The second contribution is coming  from the steps after time $\tau_n$, and it is of the order of $nd_n/2$. It turns out that the action will be minimal, when these two contributions are in balance. Thus, the order
of $|\ell_n|$ is determined by the condition 
\begin{equation}
\label{eq:heuristic-rel-ell-d}
|\ell_n|\sim nd_n.    
\end{equation}
It is easy to see that for small $u>0$ and any $x\in \mathbb{Z}$, the probability that $d(x)$ is less than $u$ is of order $u^{2\kappa+2}$, see Lemma~\ref{lem:discrep_density_near_0} for a precise statement. This means that $|\ell_n|\sim d_n^{-2\kappa-2}$, and plugging this into~\eqref{eq:heuristic-rel-ell-d}, we obtain  
\begin{equation}
\label{l_n,d_n}
|\ell_n| \sim  n^\zeta, \ \ \ d_n \sim n^{\zeta-1}, 
\end{equation}
where 
\begin{equation}
\label{eq:def_zeta}    
\zeta=\frac{2\kappa+2}{2\kappa+3}.
\end{equation} 

More precisely, we have the following theorem.

\begin{theorem}\label{thm:free}
 Let
\begin{equation}
\label{eq:def_ell_d_tau}
\ell_n=\argmin_{x\in \range(\newbar{\gamma}_n)} d(x),\quad d_n=d(\ell_n)=\min_{x\in \range(\newbar{\gamma}_n)} d(x),\quad \tau_n=\min\{i\,:\,\newbar{\gamma}_n(i)\in\{\ell_n,\ell_n+1\}\}.
\end{equation}
With probability tending to $1$ as $n\to\infty$, we have $\newbar{\gamma}_n(i)\in\{\ell_n,\ell_n+1\} $ for all $i\ge \tau_n$.

Given any $\varepsilon>0$, there are $0<k_1<k_2$ such that for all large enough $n$, we have
\begin{align}
\Prob{k_1 n^\zeta\le  |\ell_n|\le k_2 n^{\zeta}}&\ge 1-\varepsilon,\label{eq:free1}\\
\Prob{k_1 n^{\zeta-1}\le  d_n\le k_2 n^{\zeta-1}}&\ge 1-\varepsilon,\label{eq:free2}\\
\Prob{k_1 n^{\zeta}\le  {cn+A(B,F,\newbar{\gamma}_n)}\le k_2 n^{\zeta}}&\ge 1-\varepsilon.\label{eq:free3}
\end{align}
\end{theorem}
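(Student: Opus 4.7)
The plan is to derive matching upper and lower bounds $\newbar{A}(B,F;\newbar{\gamma}_n) = -cn + \Theta(n^\zeta)$ in probability, extract the scalings for $|\ell_n|$ and $d_n$ via the balance \eqref{eq:heuristic-rel-ell-d}, and finally prove the staying property by an optimality argument. The underlying heuristic decomposes the nonnegative excess action $cn + \newbar{A}(B,F;\newbar{\gamma}_n)$ into a walking contribution of order $c|\ell_n|$ and a lazy contribution of order $(n-\tau_n) d_n /2$, since each lazy step at an edge of discrepancy $d$ produces $-c + d/2$ on average over $B$. Their sum, subject to the constraint that the best discrepancy inside a window of size $|\ell_n|$ is of order $|\ell_n|^{-1/(2\kappa+2)}$, is minimized precisely at $|\ell_n| \sim n^\zeta$ and $d_n \sim n^{\zeta-1}$.

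\textbf{Upper bound on the action.} I would apply the Poisson-process convergence of rescaled discrepancies from Section \ref{secpoisson} (or a direct second-moment/Borel--Cantelli argument) to produce, with probability at least $1-\varepsilon/2$, an edge $\{x^*,x^*+1\}$ with $|x^*|\le K n^\zeta$ and $d(x^*)\le K n^{\zeta-1}$. Consider the test path that walks directly to $x^*$ in $|x^*|$ steps and at each subsequent time $i$ selects the site of $\{x^*, x^*+1\}$ making $F(\cdot)B(i)$ smaller. The walking contribution is bounded in absolute value by $c|x^*| = O(n^\zeta)$, the lazy contribution has conditional mean $(n-|x^*|)(-c + d(x^*)/2)$, and a Hoeffding bound applied to differences of size $d(x^*)$ controls its fluctuation by $d(x^*)\sqrt{n\log n} = o(n^\zeta)$. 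Summing yields $\newbar{A}(B,F;\newbar{\gamma}_n) \le -cn + K' n^\zeta$, the upper tail of \eqref{eq:free3}.

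\textbf{Lower bound and derivation of the scalings.} For the lower bound I would use
\begin{equation*}
A_n(B,F;\gamma) + cn = \sum_{i=1}^{n}\bigl(F(\gamma(i))B(i) + c\bigr),
\end{equation*}
each summand being nonnegative since $|F|\le c$ and $B\in\{-1,+1\}$. Partitioning an arbitrary admissible trajectory into maximal lazy segments at edges $\{x, x+1\}$ of durations $m_x$ and transport segments, a Hoeffding estimate conditional on $F$ shows that the lazy contribution at an edge of discrepancy $d(x)$ is at least $\tfrac12 m_x d(x)(1-o(1))$ provided $m_x \gtrsim \log n$, while transport contributions are nonnegative term by term. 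Combining this with a Poisson estimate showing that, with probability at least $1-\varepsilon/2$, no edge in $[-k_1 n^\zeta, k_1 n^\zeta]$ has discrepancy below $k_1 n^{\zeta-1}$ for $k_1$ small, any path whose range is confined to this window has excess $\gtrsim k_1' n^\zeta$, yielding the lower tail of \eqref{eq:free3}. From the matching bounds I would then read off \eqref{eq:free1} and \eqref{eq:free2}: on one side, the upper bound on the excess forces both $c|\ell_n| \lesssim n^\zeta$ (walking contribution) and $(n-\tau_n)d_n \lesssim n^\zeta$ (lazy contribution at the best edge); on the other side, the Poisson estimate together with $|\ell_n|\le|\range(\newbar{\gamma}_n)|$ forces $d_n \gtrsim n^{\zeta-1}$ whenever $|\range(\newbar{\gamma}_n)|\lesssim n^\zeta$, and the lazy contribution $\gtrsim n d_n \gtrsim n^\zeta$ in turn forces $|\ell_n|\gtrsim n^\zeta$.

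\textbf{Staying property and main obstacle.} For the first assertion of the theorem, suppose by contradiction that with probability bounded away from $0$ the optimal path spends at least $\eta(n-\tau_n)$ steps outside $\{\ell_n,\ell_n+1\}$ after time $\tau_n$, for some fixed $\eta>0$. Replacing those steps by the stay-put strategy on $\{\ell_n,\ell_n+1\}$ (selecting the better of the two sites at each step according to $B$) would, on the event where \eqref{eq:free2} holds, lower the action by at least $\Omega(\eta n \cdot n^{\zeta-1}) = \Omega(\eta n^\zeta)$ with high probability, contradicting optimality of $\newbar{\gamma}_n$. The main obstacle in the proposal is the uniform control in the lower-bound step: since there are $3^n$ admissible lazy paths, a straightforward Hoeffding union bound is useless. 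One workaround is to first establish, using only the action upper bound, an a priori bound $|\range(\newbar{\gamma}_n)| = O(n^\zeta)$ in probability, and then to union bound only over paths with range of this size; an alternative is to analyze the backward dynamic-programming value function $V_i(x)$ directly. A subsidiary delicate point is that the argument must hold for the whole range $\kappa > -1$, which is why the transport contribution should be handled term by term (nonnegativity) rather than by an aggregated $\sqrt{n}$ Hoeffding estimate, since $\zeta \le 1/2$ whenever $\kappa \le -1/2$.
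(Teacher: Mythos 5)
Your upper-bound step (ballistic test path to a low-discrepancy edge found via Theorem~\ref{thm:convtoPPP}) and your range-confinement idea coincide with the paper's argument, but the crux of the lower bound --- an estimate on the excess action that is \emph{uniform over all admissible paths} --- is precisely the point you yourself flag as the ``main obstacle,'' and neither of your workarounds closes it. Even after confining the range to $O(n^\zeta)$, the number of lazy paths (equivalently, of lazy-segment decompositions with their time endpoints) is still exponential in $n$, while the per-segment Hoeffding failure probabilities you can afford (fluctuations of size $o(m\,d)$ on a segment of length $m$) are only stretched-exponentially small in $m$, so the union bound diverges; the dynamic-programming alternative is left undeveloped. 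The paper avoids any union bound over paths with a deterministic, pathwise inequality (Lemma~\ref{lm:bound_path_by_min_discr}): whenever $B(i)\neq B(i+1)$, the two-step contribution of \emph{any} lazy path is at least $-2c+\min_{x\in\range(\gamma)}d(x)$, so the excess of every path is at least (number of sign changes of $B$ over even times) $\times\, d_n$, and the only probabilistic input is a single law of large numbers for the sign changes of $B$. This also repairs your route to $d_n\lesssim n^{\zeta-1}$: your inequality $(n-\tau_n)d_n\lesssim n^{\zeta}$ both presupposes the not-yet-proved staying behavior after $\tau_n$ and would need $n-\tau_n\gtrsim n$, whereas the sign-change bound gives $\tfrac{n}{5}\,d_n\le cn+A(B,F,\newbar{\gamma}_n)$ directly, from which \eqref{eq:free2}, \eqref{eq:free1} and the lower tail of \eqref{eq:free3} follow as in your sketch.

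The staying property is a second genuine gap. Your exchange argument assumes the path spends at least $\eta(n-\tau_n)$ steps outside $\{\ell_n,\ell_n+1\}$ and derives a contradiction; at best this shows the \emph{fraction} of time spent outside tends to $0$, whereas the theorem asserts that with probability tending to $1$ the path never leaves $\{\ell_n,\ell_n+1\}$ after $\tau_n$ --- a single short excursion is fully consistent with your conclusion. Moreover, the claimed gain $\Omega(\eta n\, n^{\zeta-1})$ from switching to the stay-put strategy is not justified: an excursion may sit on a competing edge whose discrepancy exceeds $d_n$ by an arbitrarily small amount, so one needs the separation statement of Corollary~\ref{cor:separation_of_discrepancies}(ii) (with high probability $d(x)-d_n>\delta n^{\zeta-1}$ for every other visited $x$), a lower bound on the length of any profitable excursion (Lemma~\ref{long ex}, which itself requires controlling $F$ at the neighbours of low-discrepancy edges), and fluctuation bounds that hold \emph{uniformly over all sufficiently long time intervals}, since the excursion intervals are chosen by the optimizer; only then does the drift advantage $\tfrac12(s_2-s_1)(d(x)-d_n)$ dominate the fluctuation term $\bigl(N^+(s_1,s_2)-\tfrac{s_2-s_1}{2}\bigr)\bigl(F(x)+F(x+1)-F(\ell_n)-F(\ell_n+1)\bigr)$. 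None of these ingredients appear in your sketch, so both the lower-bound step and the first assertion of the theorem remain unproved as written.
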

We prove this result in Section~\ref{sec:properties_of_the_maximizing_path}.

This result formalizing the scaling relationships~\eqref{l_n,d_n} is, in fact, based on a scaling limit for renormalized point processes of discrepancies. Namely,
 for every $n$, we can define
a random point process \[\mu_n=\{(\,n^{-\zeta}x\,,\,  n^{1-\zeta} d(x)\,)\,: \, x\in \Z\}\] on the half-plane $\R\times(0,+\infty)$, and state the following result:

\begin{theorem}\label{thm:convtoPPP}
As $n\to \infty$,  the process $\mu_n$
converges in distribution in the vague topology to the Poisson point process $\mu$ on $\R\times(0,+\infty)$
with the intensity function \[q(x,y)= q^2p_\kappa y^{2\kappa+1} , \quad (x,y)\in \R\times(0,+\infty),\]
where $q$ is defined in \eqref{p(x)}, and
\begin{equation}
\label{eq:p_kappa}
p_\kappa=2\mathfrak{B}(\kappa+1,\kappa+1),    
\end{equation}
 $\mathfrak{B}(\cdot,\cdot)$ being the Euler beta function.
\end{theorem}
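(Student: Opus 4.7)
The plan is to verify vague Poisson convergence via Kallenberg's theorem: for every finite family of disjoint bounded rectangles $R^{(1)},\ldots,R^{(m)}\subset\R\times(0,+\infty)$, I would show that the joint counts $(\mu_n(R^{(j)}))_{j=1}^m$ converge in distribution to independent Poissons with parameters $\mu(R^{(j)})$. The two inputs are a one-site tail estimate for $d(x)$ near $0$ and a Chen--Stein bound exploiting the fact that $\{d(x)\}_{x\in\Z}$ is $1$-dependent (since $d(x)$ is a function of $F(x),F(x+1)$ only).

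First I would compute the tail $\Pp(d(0)\le u)$ as $u\to 0^+$. The event $\{d(0)\le u\}=\{|F(1)-F(0)|\ge 2c-u\}$ decomposes, by the symmetry of $\varrho$, into two equiprobable cases, and parametrizing the case $F(1)\approx c$, $F(0)\approx -c$ by $F(1)=c-s$, $F(0)=-c+t$ with $s,t\ge 0$ turns the event into $\{s+t\le u\}$, so \eqref{p(x)} gives
\[
\Pp(d(0)\le u)=2\int_0^u\!\!\int_0^{u-s}\varrho(c-s)\,\varrho(-c+t)\,dt\,ds\sim 2q^2\int_0^u\!\!\int_0^{u-s}s^\kappa t^\kappa\,dt\,ds.
\]
A scaling $s=ur$ reduces the right-hand side to a beta integral, and the identity $\mathfrak{B}(\kappa+1,\kappa+2)=\tfrac12\mathfrak{B}(\kappa+1,\kappa+1)=p_\kappa/4$ yields
\[
\Pp(d(0)\le u)\sim\frac{q^2\,p_\kappa}{2(\kappa+1)}\,u^{2\kappa+2},\qquad u\to 0^+.
\]
Writing a generic $R=[\alpha,\beta]\times[a,b]$ with $a>0$ and substituting $u=y\,n^{\zeta-1}$, the identity $(\zeta-1)(2\kappa+2)=-\zeta$ built into \eqref{eq:def_zeta} gives $\Pp(E_x)\sim n^{-\zeta}\int_a^b q^2 p_\kappa y^{2\kappa+1}\,dy$ for the single-site event $E_x=\{n^{1-\zeta}d(x)\in[a,b]\}$; summing over the $\sim(\beta-\alpha)n^\zeta$ integers $x\in I_n:=\{x\in\Z:n^{-\zeta}x\in[\alpha,\beta]\}$ then gives $\Exp\mu_n(R)\to\mu(R)$.

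Writing $W_n=\mu_n(R)=\sum_{x\in I_n}\1_{E_x}$, the Chen--Stein bound (Arratia--Goldstein--Gordon) with dependency neighbourhoods $\{x-1,x,x+1\}$ says that the total variation distance between $W_n$ and $\mathrm{Poisson}(\Exp W_n)$ is at most $b_1+b_2$, where $b_1=\sum_{x\in I_n}\sum_{|x'-x|\le 1}\Pp(E_x)\Pp(E_{x'})=O(n^\zeta\cdot n^{-2\zeta})\to 0$ and $b_2=\sum_{x\in I_n}\Pp(E_x\cap E_{x+1})$. The main obstacle, and the only place where the dependence actually enters, is controlling $b_2$: for $E_x\cap E_{x+1}$ to hold, $F(x+1)$ must lie within $O(n^{\zeta-1})$ of one endpoint of $[-c,c]$ and both $F(x)$ and $F(x+2)$ within $O(n^{\zeta-1})$ of the other, so by \eqref{p(x)} each of the three factors contributes $O(n^{(\zeta-1)(\kappa+1)})$, giving $\Pp(E_x\cap E_{x+1})=O(n^{3(\zeta-1)(\kappa+1)})$ and
\[
b_2=O\bigl(n^{\zeta+3(\zeta-1)(\kappa+1)}\bigr)=O\bigl(n^{-(\kappa+1)/(2\kappa+3)}\bigr)\to 0
\]
precisely because $\kappa>-1$. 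This proves $W_n\Rightarrow\mathrm{Poisson}(\mu(R))$. Joint convergence over disjoint $R^{(1)},\ldots,R^{(m)}$ follows by inner approximation (arranging pairwise positive gaps between the $x$-projections, which is legitimate because $\mu$ is non-atomic) combined with $1$-dependence: for all large $n$ the index sets $I_n^{(j)}$ are pairwise separated by more than $1$, so the variables $W_n^{(j)}$ are exactly independent, and Kallenberg's theorem closes the argument.
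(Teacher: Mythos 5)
Your tail computation for $d(0)$, the expectation asymptotics via $(\zeta-1)(2\kappa+2)=-\zeta$, and the Chen--Stein step for a single rectangle (including the three-site bound $\Pp(E_x\cap E_{x+1})=O(n^{3(\zeta-1)(\kappa+1)})$, which is exactly the covariance estimate the paper uses with Chen's $m$-dependent Poisson approximation) are all correct and in the same spirit as the paper's argument. The genuine gap is in the final step, where you pass from one rectangle to the full point-process convergence. You claim that for disjoint rectangles $R^{(1)},\ldots,R^{(m)}$ one can, by inner approximation, arrange pairwise positive gaps between the $x$-projections so that the index sets $I_n^{(j)}$ are separated by more than $1$ and the counts $W_n^{(j)}$ become exactly independent. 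This fails for the configuration that actually matters: two disjoint rectangles can have identical (or overlapping) $x$-projections and disjoint $y$-intervals, e.g.\ $[0,1]\times[1,2]$ and $[0,1]\times[2,3]$. No inner approximation creates a gap between coinciding $x$-projections, the index sets then overlap for every $n$, and the counts are functions of the same $F$-values, hence not independent at finite $n$. The same configurations reappear if you instead try to verify the avoidance-function hypothesis of the Kallenberg criterion, since the DC-ring there consists of finite unions of rectangles, including vertically stacked ones; so as written the argument does not establish convergence of the process, only of single-rectangle counts.

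The repair is short and uses only what you already have: for a finite union $U$ of disjoint rectangles, the count $\mu_n(U)=\sum_k \1_{\{(n^{-\zeta}k,\,n^{1-\zeta}d(k))\in U\}}$ is still a sum of $1$-dependent indicators, and your $b_1$, $b_2$ bounds go through verbatim with $b_*$ taken to be the largest upper $y$-coordinate among the rectangles; this yields Poisson convergence of $\mu_n(U)$, in particular $\Pp(\mu_n(U)=0)\to e^{-\mu(U)}$, and then Kallenberg's avoidance-function criterion (with the expectation bound only needed on single rectangles) closes the proof. This is precisely the route the paper takes: it never separates rectangles, but applies the $1$-dependent Poisson approximation directly to the union and reads off the avoidance probability via $h(k)=\1_{\{k=0\}}$.
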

We prove this theorem in Section~\ref{secpoisson}.

It follows from Theorem~\ref{thm:free} that we can choose a subsequence of the positive integers such that along this subsequence
\[\frac{cn+A(B,F;\newbar{\gamma}_n)}{n^\zeta}\]
converge weakly to a positive random variable.

Under the assumption below, we can prove that  this is even true without taking a subsequence. Moreover, we can also identify the limiting law. 

\medskip

\noindent
{\bf Assumption:}
 There exists a nonrandom constant $M>1$ such that 
 \begin{equation}
 \label{eq:conjecture-time-to-min-discr}
 \lim_{n\to\infty} \Prob{\tau_n<M|\ell_n|}=1.
 \end{equation}

We conjecture that there is $\kappa_0\in(-1,0)$ such that this assumption holds in our setting iff $\kappa> \kappa_0$, but at the moment we are not able to prove this claim.

\begin{theorem}\label{thm:weakconv}
If the Assumption holds, then there is a constant $\newtau$ such that for all $t\ge 0$, we have
\[\lim_{n\to\infty}\Prob{\frac{cn+ A(B,F;\newbar{\gamma}_n)}{\newtau n^\zeta}\ge t}=\exp\left(-t^{2\kappa+3}\right).\]
\end{theorem}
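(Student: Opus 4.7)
The plan is to decompose $\newbar A_n$ into a \emph{travel} part on $[0,\tau_n]$ and a \emph{stay} part on $[\tau_n,n]$, show that each concentrates within $o_{\Pp}(n^\zeta)$ of a deterministic functional of $(\ell_n,d_n)$, and then reduce the resulting minimization to a continuous functional of the limiting Poisson point process from Theorem~\ref{thm:convtoPPP}. By Theorem~\ref{thm:free}, with probability tending to $1$ we have $\newbar\gamma_n(i)\in\{\ell_n,\ell_n+1\}$ for all $i\ge\tau_n$; moreover, by optimality of $\newbar\gamma_n$ the prefix $\newbar\gamma_n|_{[0,\tau_n]}$ is itself an optimal point-to-point path from $0$ to $\newbar\gamma_n(\tau_n)$ in time $\tau_n$. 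This produces the split
\[
\newbar A_n=\newbar A(B,F;\tau_n,\newbar\gamma_n(\tau_n))+\sum_{i=\tau_n+1}^n F(\newbar\gamma_n(i))B(i).
\]

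For the travel part, the Assumption confines the slope $\alpha_n=|\newbar\gamma_n(\tau_n)|/\tau_n$ to $[1/M,1]$. Combining the global corner bound $\Lambda(\alpha)\le c-K|\alpha|$ from Theorem~\ref{shape function non-linear}, the flat-edge identity $\Lambda(\alpha)=c-K|\alpha|$ on $[-\alpha_0,\alpha_0]$ from Theorem~\ref{shape function linear}, and the Kingman-type asymptotic $\newbar A(B,F;\tau,[\alpha\tau])=-\tau\Lambda(\alpha)+o(\tau)$, one obtains $c\tau_n+\newbar A(B,F;\tau_n,\newbar\gamma_n(\tau_n))=K|\ell_n|+o_{\Pp}(n^\zeta)$, with the matching upper inequality witnessed by the explicit competing strategy that travels to $\ell_n$ at the flat-edge slope $\alpha_0$ in time $\lceil|\ell_n|/\alpha_0\rceil$ and then sits on $\{\ell_n,\ell_n+1\}$. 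For the stay part, writing $|F(\ell_n)|=c-\epsilon_1$, $|F(\ell_n+1)|=c-\epsilon_2$ with $\epsilon_1+\epsilon_2=d_n$, the optimal lazy choice contributes $F(\ell_n+1)$ when $B(i)=+1$ and $-F(\ell_n)$ when $B(i)=-1$, giving per-step conditional mean $-c+d_n/2$ and per-step conditional variance $(\epsilon_1-\epsilon_2)^2/4\le d_n^2/4$. Conditional independence then yields
\[
\sum_{i=\tau_n+1}^n F(\newbar\gamma_n(i))B(i)=-c(n-\tau_n)+\tfrac12(n-\tau_n)d_n+O_{\Pp}(d_n\sqrt n),
\]
with error $O_{\Pp}(n^{\zeta-1/2})=o_{\Pp}(n^\zeta)$. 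Since also $\tau_n d_n=O_{\Pp}(n^{2\zeta-1})=o(n^\zeta)$, the two pieces combine into the clean asymptotic $cn+\newbar A_n=K|\ell_n|+\tfrac n2 d_n+o_{\Pp}(n^\zeta)$.

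Applying the same competing-strategy construction to an arbitrary candidate edge $\ell$ yields the upper bound $cn+\newbar A_n\le K|\ell|+nd(\ell)/2+o_{\Pp}(n^\zeta)$, which combined with the previous display forces $K|\ell_n|+\tfrac n2 d_n=\min_\ell(K|\ell|+nd(\ell)/2)+o_{\Pp}(n^\zeta)$. After rescaling, $(n^{-\zeta}\ell_n,\,n^{1-\zeta}d_n)$ is, up to $o_{\Pp}(1)$, the minimizer over $\mu_n$ of the positive, continuous, proper functional $\Phi(u,v)=K|u|+v/2$ (whose sublevel sets are relatively compact in $\R\times(0,\infty)$ with finite $\mu$-measure). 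Theorem~\ref{thm:convtoPPP} and the vague-topology continuous mapping theorem then give $(cn+\newbar A_n)/n^\zeta \stackrel{d}{\longrightarrow} \min_{(u,v)\in\mu}\Phi(u,v)$. A direct Euler-beta computation yields
\[
\Pp\Bigl(\min_{(u,v)\in\mu}\Phi(u,v)>t\Bigr)=\exp\bigl(-\mu\{\Phi<t\}\bigr)=\exp\Bigl(-\frac{2q^2p_\kappa}{K}\int_0^{2t}(t-v/2)\,v^{2\kappa+1}\,dv\Bigr)=\exp(-C_\kappa t^{2\kappa+3}),
\]
with $C_\kappa=2^{2\kappa+3}q^2p_\kappa\mathfrak{B}(2\kappa+2,2)/K$, completing the argument upon setting $\newtau=C_\kappa^{-1/(2\kappa+3)}$.

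The chief obstacle is pinning down the travel cost exactly at $K|\ell_n|$. The lower bound $\ge K|\ell_n|+o(n^\zeta)$ follows from the weak corner estimate \eqref{eq:rect_shape_bound1} and is valid for all $\kappa>-1$, but the matching upper bound genuinely requires the flat edge provided by Theorem~\ref{shape function linear}, in order to realize the ideal travel slope $\alpha_0$ at travel cost exactly $K|\ell_n|$. The Assumption plays a complementary role: it keeps $\tau_n$ on the $n^\zeta$ scale so that both the remainder $\tau_n d_n$ and the Gaussian stay fluctuations stay $o(n^\zeta)$; by enlarging $M$ if needed (which only weakens the Assumption), one ensures $1/M\le\alpha_0$ and the competing upper-bound strategy becomes compatible with the actual travel window allowed by the Assumption.
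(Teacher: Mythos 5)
Your overall route is the same as the paper's: split the optimal action into a travel part and a stay part on the favorable edge, show the excess over $-cn$ equals $s|\ell_n|+\tfrac n2 d_n+o_{\Pp}(n^\zeta)$, compare with an arbitrary candidate edge via an explicit competitor, transfer to the Poisson limit of Theorem~\ref{thm:convtoPPP}, and compute the beta integral (your constant agrees with the paper's). The genuine gap is that you make Theorem~\ref{shape function linear} an essential ingredient: that theorem requires $\kappa>0$, whereas Theorem~\ref{thm:weakconv} is asserted, under the Assumption, for every $\kappa>-1$ (the paper explicitly notes that Theorem~\ref{shape function linear} is the only result needing $\kappa>0$). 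For $-1<\kappa\le 0$ your matching upper bound has no flat edge to travel along, and your own closing discussion concedes that the upper bound ``genuinely requires'' it, so your argument does not prove the stated theorem in its full range. The paper avoids this entirely: it splits at time $M|\ell_n|$ rather than at $\tau_n$, so the prefix is evaluated at the fixed slope $1/M$ via Lemma~\ref{lemma3}, the constant $s=M\bigl(c-\shape(1/M)\bigr)$ appears on both sides of the sandwich automatically, and the linearity of $\shape$ on $[0,1/M]$ is then \emph{derived from the Assumption itself} (Theorem~\ref{linearunderassumption}), identifying $s=-\shape'(0+)$ without any restriction on $\kappa$. (Relatedly, the constant $K$ in \eqref{eq:rect_shape_bound1} is $c-\Exp|F(0)|$ and need not coincide with the flat-edge slope; the inequality you actually need for the lower bound, $c-\shape(\alpha)\ge -\shape'(0+)\,\alpha$, is simply concavity.)

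Two further points need repair even in the range where your scheme works. First, splitting at the random time $\tau_n$ with random slope $\alpha_n\in[1/M,1]$ requires a uniform-in-slope version of the Kingman asymptotics applied along random times; the paper sidesteps this by working at time $M|\ell_n|$, where only almost sure convergence along $m\to\infty$ at slope $1/M$, together with $|\ell_n|\to\infty$ (Lemma~\ref{lntoinfty}), is used. Second, your justification of the continuous-mapping step is incorrect as stated: the sublevel set $\{(u,v):K|u|+v/2\le t\}$ is \emph{not} relatively compact in $\R\times(0,\infty)$, since it touches the boundary $v=0$ where the intensity $y^{2\kappa+1}$ may even blow up for $2\kappa+1<0$. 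One must truncate at $v\ge\eps$ and control, uniformly in $n$, the expected number of rescaled points with $v<\eps$ and $K|u|\le t$ (it is $O(\eps^{2\kappa+2})$), or argue via tightness and avoidance probabilities as the paper does; likewise, applying the almost sure shape asymptotics along the random candidate $x_n$ needs $|x_n|\to\infty$, which the paper proves separately. These last items are fixable with standard arguments; the $\kappa$-scope issue is the substantive defect.
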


\begin{remark}
The limiting distribution in Theorem~\ref {thm:weakconv} can
be viewed as a  counterpart of the Tracy--Widom law in our model. Note also that the limiting law is universal and depends only on the exponent $\kappa$.
\end{remark}

Since we believe that the Assumption holds once $\kappa> \kappa_0$, the following result is a conjectural extension of  Theorem~\ref{shape function linear}.  
\begin{theorem}\label{linearunderassumption}
If the Assumption holds, then $\shape$ restricted to the interval $\left[0,\frac{1}M\right]$ is linear.    
\end{theorem}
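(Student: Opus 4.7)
The plan is to combine the concavity of $\shape$ from Theorem~\ref{shape function} with a careful construction of paths based on the Assumption. By Theorem~\ref{shape function}, $\shape$ is concave on $[-1,1]$, and by Theorem~\ref{shape function non-linear}, $\shape(0)=c$ and there is a $K>0$ with $\shape(\alpha) \le c - K|\alpha|$. Hence the right derivative $v_0 := \shape'(0^+) \in [-\infty, 0]$ exists and satisfies $v_0 \le -K < 0$. Concavity yields on $[0,1]$ the tangent upper bound $\shape(\alpha) \le c + v_0 \alpha$, and on $[0,1/M]$ the chord lower bound $\shape(\alpha) \ge c + M\alpha(\shape(1/M) - c)$. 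Therefore, linearity of $\shape$ on $[0,1/M]$ is equivalent to the single identity
\[
\shape(1/M) \;=\; c + v_0/M,
\]
whose $\le$ direction is automatic from the tangent bound. The plan reduces to proving the reverse inequality $\shape(1/M) \ge c + v_0/M$.

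To establish this, I would construct, for each $\varepsilon > 0$ and all sufficiently large $n$, a lattice path $\gamma_n$ of length $n$ from $0$ to $\lfloor n/M \rfloor$ with action at most $(-c - v_0/M + \varepsilon)n$. The construction uses the Assumption to route $\gamma_n$ along an optimal free-endpoint-like trajectory, but to a favorable edge that is displaced by $\lfloor n/M \rfloor$ rather than the typical $O(n^\zeta)$. By Theorem~\ref{thm:convtoPPP}, in any window of length of order $n^\zeta$ around $\lfloor n/M \rfloor$ there is, with high probability, a favorable edge with discrepancy of order $n^{\zeta-1}$. Under the Assumption, the path can reach such an edge in time at most $M \lfloor n/M \rfloor = n$, which is precisely why the linear regime's cutoff is $1/M$. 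The remaining $O(n^\zeta)$ time is used to dwell at the favorable edge and make the final adjustment to hit $\lfloor n/M \rfloor$ exactly, contributing only $o(n)$ to the action.

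The main obstacle is the estimate of the movement-phase action, i.e.\ the contribution from the path segment from $0$ to the favorable edge near $\lfloor n/M \rfloor$. Unlike Phase~1 in the free-endpoint analysis of Theorem~\ref{thm:free}, this segment has length of order $n$, so its action is a leading-order term. What must be shown is that this movement-phase action is at most $(-c - v_0/M)n + o(n)$; equivalently, the effective action cost per unit of displacement during the movement phase is uniformly $-v_0 = K$ throughout $[0, 1/M]$. To establish this, I would subdivide the movement phase into a hierarchy of sub-intervals and combine the definition of the right derivative $v_0$ (which pins down the small-slope asymptotics of $\shape$) with the Assumption, which is used to ensure that a reach-and-dwell strategy analogous to that of Theorem~\ref{thm:free} can be applied locally on each sub-interval while the endpoints of consecutive sub-intervals can be aligned on their favorable edges at cost $o(n)$ in total. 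Carrying out this hierarchical estimate and controlling the transition costs across sub-intervals is the crux of the proof; once it is done, summing with the dwelling phase and letting $\varepsilon \to 0$ yields the desired inequality $\shape(1/M) \ge c + v_0/M$ and hence linearity on $[0, 1/M]$.
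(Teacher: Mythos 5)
Your reduction is sound: by concavity and $\shape(0)=c$, linearity on $[0,1/M]$ is equivalent to $\shape(1/M)\ge c+v_0/M$ with $v_0=\shape'(0^+)$, i.e.\ to producing paths of length $n$ ending at $[n/M]$ with action at most $(-c+|v_0|/M+\varepsilon)n$. (A minor point first: your identification $-v_0=K$ is unjustified; $K$ in Theorem~\ref{shape function non-linear} is just some constant with $\shape(\alpha)\le c-K|\alpha|$, e.g.\ $K=c-\Exp|F(0)|$ from Lemma~\ref{lem:corner}, and in general one only has $|v_0|\ge K$.) The genuine gap is the step you yourself call the crux: the claim that the movement phase can be performed at cost rate $|v_0|$ per unit of displacement within a time budget of $M$ per unit of displacement. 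The Assumption does not deliver this. It concerns the free-endpoint optimizer $\newbar{\gamma}_n$ reaching \emph{its own} favorable edge $\ell_n$, which lies at distance of order $n^{\zeta}=o(n)$, within time $M|\ell_n|$; it says nothing about reaching a prescribed favorable edge at macroscopic distance $[n/M]$, nor about the action cost of doing so. Moving ballistically costs about $c$ per unit displacement, which is strictly larger than $|v_0|$ (if $|v_0|=c$ then $\shape(\alpha)=c(1-\alpha)$, contradicting \eqref{eq:nonlinearity}), so the movement phase must itself alternate moving and dwelling, and the only a priori bound on the cost of a segment traversed at speed $1/M$ is $M\bigl(c-\shape(1/M)\bigr)$ per unit displacement --- which equals $|v_0|$ exactly when the theorem holds. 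Thus the asserted ``uniform cost rate $|v_0|$ throughout $[0,1/M]$'' is essentially the statement being proven, and the hierarchical subdivision does not break the circle: blocks traversed at very small slope (where the definition of $v_0$ gives rate $|v_0|+\varepsilon$) exceed the time budget, blocks at slope $1/M$ give only the unproven rate, and excising dwell times to concatenate blocks re-indexes the $B$-field seen downstream --- precisely the difficulty that forced the unconditional proof of Theorem~\ref{shape function linear} to develop all of Section~\ref{seclinear} (concentration over the families $\mathcal{U}_m$ and the loop decomposition), which your ``alignment at cost $o(n)$'' glosses over.

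The paper's conditional proof avoids any construction. On the event $\{\tau_n<M|\ell_n|\}$, combined with Theorem~\ref{thm:free} (the optimizer stays at $\{\ell_n,\ell_n+1\}$ after $\tau_n$), for any $M'>M$ the restrictions of $\newbar{\gamma}_n$ to $[0,M|\ell_n|]$ and to $[0,M'|\ell_n|]$ are both point-to-point optimizers with endpoints in $\{\ell_n,\ell_n+1\}$, so by Lemma~\ref{lemma3} (with $|\ell_n|\to\infty$ a.s., Lemma~\ref{lntoinfty}) their actions are $-M\shape(1/M)|\ell_n|+o(\ell_n)$ and $-M'\shape(1/M')|\ell_n|+o(\ell_n)$; since each step contributes at least $-c$, the latter is at least the former minus $(M'-M)c|\ell_n|$, yielding $M'\shape(1/M')\le M\shape(1/M)+(M'-M)c$. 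Concavity gives the reverse inequality, hence equality for every $M'>M$, i.e.\ linearity on $[0,1/M]$. If you want to rescue your route, you would need to extract from the Assumption a comparison of this two-time (or excision) type rather than a direct path construction to a macroscopic target.
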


We prove this result in Section~\ref{sec:distributional_convergence}.
\subsection{Notations}

Given the integers $x_1,x_2$ and $0\le t_1\le t_2$, let $\Gamma((t_1,x_1);(t_2,x_2))$ be the set of lazy walks starting at $x_1$ at time $t_1$ and ending at $x_2$ at time $t_2$, that is, the set of functions $\gamma:\{t_1,t_1+1,\dots,t_2\}\to \mathbb{Z}$ such that $\gamma(t_1)=x_1$, $\gamma(t_2)=x_2$ and $|\gamma(i)-\gamma(i+1)|\le 1$ for all $t_1\le i<t_2$. Given $f:\mathbb{Z}\to [-c,c]$, $b:\mathbb{N}\to \{\pm 1\}$ and  $\gamma\in\Gamma((t_1,x_1);(t_2,x_2))$, the action of $\gamma$ is defined as
\[A(b,f;\gamma)=\sum_{i=t_1+1}^{t_2} b(i)f(\gamma(i)).\]
The minimal possible action is denoted by
\begin{equation}
\label{eq:p2p-min-action}
\newbar{A}(b,f;(t_1,x_1),(t_2,x_2))=\min_{\gamma\in \Gamma((t_1,x_1);(t_2,x_2))} A(b,f;\gamma).
\end{equation}

We write $\Gamma(t,x)$ and $\newbar{A}(b,f;t,x)$ in place of $\Gamma((0,0);(t,x))$ and $\newbar{A}(b,f;(0,0),(t,x))$. Also, if there is no ambiguity, we write $\newbar{A}(t,x)$ and $\newbar{A}((t_1,x_1),(t_2,x_2))$ in place of $\newbar{A}(B,F;t,x)$ and $\newbar{A}(B,F;(t_1,x_1),(t_2,x_2))$, respectively.

We write $\mathbb{E}_B$, when we want to emphasize that the expectation is over the random choice of $B$,  while $F$ is kept fixed.  When we want to emphasize that the expectation is over both $B$ and $F$, we write $\mathbb{E}_{B,F}$.

To make use of the symmetries of our models, we define the integer part function $[\cdot]$ a bit unusual way,  that is,
\[[x]=\begin{cases}
\lfloor x\rfloor&\text{for $x\ge 0$,}\\
-\lfloor -x\rfloor&\text{for $x< 0$.}
\end{cases}\]
Here $\lfloor \cdot\rfloor$ is the floor function. In other words, we always round towards $0$.

We will often omit the integer parts for better readability. 

\section{Basic properties of the shape function}\label{secbasic}
\subsection{The discrepancy of an edge}

\begin{definition}
Consider all the paths that stay on the edge $\{x,x+1\}$. Let $\eta_x$ have the smallest action among these, that is, let
		\begin{equation}
		\label{eq:eta} 
			\eta_x (t) = 
				\begin{cases}
					x &\textrm{ if } \spacevar{x} \leq \spacevar{x+1} \textrm{ and } B(t) = 1, \\ 
					x &\textrm{ if } \spacevar{x} \geq \spacevar{x+1} \textrm{ and } B(t) = -1, \\ 
					x+1  &\textrm{ otherwise.}
				\end{cases}
		\end{equation}

\end{definition}

Note that if $\discr{x} < c$ (see Definition~\ref{def:discrepancy}), then necessarily $\spacevar{x}$ and $ \spacevar{x+1}$ have opposite signs.  Since we will focus on edges with small discrepancies, we will restrict our attention to looking at sites where this holds. 
  We will often use the following naive bound on the action of $\eta_x$. Assuming that $F(x)F(x+1)<0$, we have 
  \begin{equation}
  \label{eq:eta_lower_bound}
    \sum_{i=a+1}^{b} B(i)F(\eta_x(i)) \leq -(b-a)\min\left( |\spacevar{x}|, | \spacevar{x+1}| \right)  \leq (b-a)(\discr{x}-c).
  \end{equation}

For more precise control of the action of $\eta_x$, let us introduce the random variable 
\begin{align*}
 N^+(a,b) &= |\{ i \in [a+1,b]\,:\,B(i)=+1\}|, 
\end{align*}
and then compute the action of $\eta_x$ between $a$ and $b$:
\begin{align}
\sum_{i=a+1}^{b} B(i)F(\eta_x(i))
&= \min\left( \spacevar{x}, \spacevar{x+1}\right) \cdot N^+(a,b) - \max \left( \spacevar{x}, \spacevar{x+1}\right) \cdot (b-a-N^+(a,b)) \nonumber \\
\label{eq:eta_action}
&= \left( b-a \right) \left(  \frac{1}{2}\discr{x}-c \right) + \left(  N^+(a,b) - \frac{b-a}{2} \right)\left( \spacevar{x} + \spacevar{x+1} \right).
\end{align}

Note that $|\spacevar{x} + \spacevar{x+1}|\le d(x)$, so
\begin{equation}\label{eq:eta_action2}
\left|\sum_{i=a+1}^{b} B(i)F(\eta_x(i))-\left( b-a \right) \left(  \frac{1}{2}\discr{x}-c \right)\right|\le \left|  N^+(a,b) - \frac{b-a}{2} \right|d(x).
\end{equation}

And as we would expect, the contribution on a large time scale $T$ coming from a path staying on the edge $\{x, x+1\}$ will be $T\left(  \frac{\discr{x}}{2}-c \right)$, plus fluctuations of order $\sqrt{T}$.  

In the next lemma, we describe the distribution of $d(0)$ under the assumptions in \eqref{p(x)}. 

\begin{lemma}
\label{lem:discrep_density_near_0}
Let $p$ be the density of $d(0)$, then
\begin{equation}\label{eqdpdf}\lim_{u\to 0+} \frac{p(u)}{u^{2\kappa+1} }=p_\kappa q^2 
\end{equation}
and
\begin{equation}\label{eqdcdf}\lim_{u\to 0+} \frac{\Prob{d(0)\le u}}{u^{2\kappa+2}}=\frac{p_\kappa}{2\kappa+2} q^2,
\end{equation}
where $p_\kappa$ is defined in~\eqref{eq:p_kappa}.

\end{lemma}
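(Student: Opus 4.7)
The plan is to compute the density of $d(0)$ by first analyzing the density of the difference $Z=F(1)-F(0)$ near its extremes $\pm 2c$, and then use $d(0)=2c-|Z|$ together with the symmetry of $\varrho$.

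First I would write the density $f_Z$ of $Z$ as the convolution
\[f_Z(z)=\int_{-c}^{c} \varrho(x)\varrho(x+z)\,dx.\]
For $z=2c-t$ with $t>0$ small, $\varrho(x)\varrho(x+2c-t)$ is supported on $x\in[-c,-c+t]$ (using $\supp\varrho\subset(-c,c)$). Substituting $x=-c+s$ and using that $\varrho$ is even so $\varrho(-c+s)=\varrho(c-s)$, I obtain
\[f_Z(2c-t)=\int_0^{t}\varrho(c-s)\,\varrho\bigl(c-(t-s)\bigr)\,ds.\]
Since $d(0)=2c-|Z|$ and $Z$ is symmetric about $0$, the density of $d(0)$ at $u$ is $p(u)=2f_Z(2c-u)$, so it suffices to show
\[\int_0^{u}\varrho(c-s)\,\varrho\bigl(c-(u-s)\bigr)\,ds\sim q^2\,\mathfrak{B}(\kappa+1,\kappa+1)\,u^{2\kappa+1}\]
as $u\to 0+$; combined with the factor $2$ and \eqref{eq:p_kappa}, this gives \eqref{eqdpdf}.

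To establish this asymptotic, I would write $\varrho(c-s)=qs^\kappa+s^\kappa\epsilon(s)$ where $\epsilon(s)\to 0$ as $s\to 0+$ by assumption \eqref{p(x)}. Plugging this in and expanding, the main term is
\[q^2\int_0^{u} s^\kappa(u-s)^\kappa\,ds=q^2\,\mathfrak{B}(\kappa+1,\kappa+1)\,u^{2\kappa+1},\]
the first identity being the standard Beta-function integral after the substitution $s=ur$. The three remaining cross terms each contain at least one factor of $\epsilon$; for any $\delta>0$ we have $|\epsilon(s)|<\delta$ once $s$ is small enough, and since the integration range $[0,u]$ is contained in this small neighborhood for $u$ small, each cross term is bounded in absolute value by a constant multiple of $\delta\cdot u^{2\kappa+1}\mathfrak{B}(\kappa+1,\kappa+1)$. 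Letting $\delta\to 0$ yields the claimed asymptotic, proving \eqref{eqdpdf}.

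For \eqref{eqdcdf}, since $d(0)$ has the density $p$ just identified, $\Prob{d(0)\le u}=\int_0^{u} p(v)\,dv$. Given any $\delta>0$, \eqref{eqdpdf} provides $u_0>0$ such that $(p_\kappa q^2-\delta)v^{2\kappa+1}\le p(v)\le (p_\kappa q^2+\delta)v^{2\kappa+1}$ for $v\in(0,u_0)$; integrating over $(0,u)$ with $u<u_0$ and dividing by $u^{2\kappa+2}$, then sending $u\to 0+$ and $\delta\to 0$, gives the limit $\frac{p_\kappa q^2}{2\kappa+2}$.

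The only subtlety, and the main technical point, is justifying the dominant contribution of the Beta integral uniformly in the small error $\epsilon$: one has to notice that the whole range of integration shrinks to $0$ with $u$, so the pointwise convergence $\epsilon(s)\to 0$ is actually used uniformly on the range, and no separate control of $\varrho$ away from $\pm c$ is needed. The integrability assumption $\kappa>-1$ is exactly what makes the Beta integral $\mathfrak{B}(\kappa+1,\kappa+1)$ finite.
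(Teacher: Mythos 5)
Your proposal is correct and follows essentially the same route as the paper: express the density of $F(1)-F(0)$ as the convolution $\varrho*\varrho$, use symmetry to get $p(u)=2(\varrho*\varrho)(2c-u)$, reduce the integral near the edge to the Beta integral $\mathfrak{B}(\kappa+1,\kappa+1)$ with an error term that is uniformly small because the integration range $[0,u]$ shrinks, and integrate to obtain \eqref{eqdcdf}. No gaps.
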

\begin{proof}
Using the fact $\varrho$ is symmetric across $0$, the density of $F(1)-F(0)$ is given by the convolution $\varrho*\varrho$. Then it follows easily that
\[p(u)=\mathbbm{1}(u\ge 0)2(\varrho*\varrho)(2c-u).\]
Thus, for $u\ge 0$, we have
\begin{multline*}p(u)=2\int_0^u \varrho(c-t)\varrho(c-u+t)\,dt=2(q^2+\varepsilon(u))\int_0^u t^\kappa(u-t)^\kappa\,dt\\=2(q^2+\varepsilon(u)) u^{2\kappa+1} \int_0^1 x^\kappa (1-x)^{\kappa}dx=2(q^2+\varepsilon(u)) u^{2\kappa+1} \mathfrak{B}(\kappa+1,\kappa+1),\end{multline*}
where $\lim_{u\to 0} \varepsilon(u)=0$. Thus, we get \eqref{eqdpdf}. Then \eqref{eqdcdf} follows by integrating \eqref{eqdpdf}. 
\end{proof}

\subsection{Existence of the shape function}\label{sec:shape-existence}

Let 
\[
	\cone = \{(t,x) \in \R^2\,:\,0 \leq |x| \leq t\}.
\]

\begin{lemma}
\label{th:shape_function_exists}

There exists a deterministic function $\rectShape:\cone \to [-c,0]$ and an event $\Omega_1\in\Fc$ with 
$\Prob{\Omega_1}=1$
such that if $(B,F)\in\Omega_1$ and $(t,x)\in\cone$, then 
\begin{align} \label{eq:LLN_shape_function}
	\rectShape(t,x) = \lim_{n \to \infty} n^{-1} \rectAction \left(\round{nt}, \round{nx} \right). 
\end{align}
The function $\rectShape$ is subadditive, convex, homogeneous, and symmetric, i.e., for 
$(t_1, x_1),(t_2, x_2),(t,x)$ in~$\cone$,
\begin{align} \nonumber
	\rectShape(t_1, x_1) + \rectShape \left( t_2, x_2 \right) \ge \rectShape(t_1 + t_2, x_1 + x_2), 
\end{align}
 \begin{align} \nonumber
	r \rectShape(t_1, x_1) + (1-r) \rectShape(t_2, x_2) \ge \rectShape(rt_1 + (1-r)t_2, rx_1 + (1-r)x_2),\quad r\in[0,1],
\end{align}
\begin{align} \label{eq:homogeneity}
	\rectShape(t,x) = k^{-1} \rectShape(kt,kx),\quad k > 0,
\end{align}
\begin{align} \nonumber
	\rectShape(t,-x) = \rectShape(t,x).
\end{align}

\end{lemma}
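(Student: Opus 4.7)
The argument is a standard application of Kingman's subadditive ergodic theorem along integer directions, followed by a continuity / approximation step to cover all of $\cone$. Fix an integer direction $(t_0, x_0) \in \cone$ with $t_0 \ge 1$, and set
\[X_{m,n} = \rectAction((mt_0, mx_0), (nt_0, nx_0)), \quad 0 \le m < n.\]
Concatenation of an optimal path from $(0,0)$ to $(mt_0, mx_0)$ with one from $(mt_0, mx_0)$ to $(nt_0, nx_0)$ gives $X_{0,n} \le X_{0,m} + X_{m,n}$. After the substitution $i \mapsto i + mt_0$, $\gamma(i) \mapsto \gamma(i) + mx_0$, the variable $X_{m,n}$ is seen to equal $X_{0, n-m} \circ \theta^m$, where $\theta$ is the measure-preserving transformation on $(\Omega, \Fc, \Pp)$ that advances $B$ by $t_0$ time steps and translates $F$ by $x_0$ space units. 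Since $B$ and $F$ are independent i.i.d.\ sequences, $\theta$ is a Bernoulli shift, hence ergodic. The deterministic bound $|X_{m,n}| \le c\,t_0\,(n-m)$ supplies integrability, and Kingman's theorem yields a constant $\rectShape(t_0, x_0)$ together with an a.s.\ event on which $n^{-1} X_{0,n} \to \rectShape(t_0, x_0)$.

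To pass from integer to rational to real directions, I use a deterministic Lipschitz estimate
\[\bigl|\rectAction(s_1, y_1) - \rectAction(s_2, y_2)\bigr| \le c \bigl(|s_1 - s_2| + |y_1 - y_2|\bigr),\]
obtained by padding an optimal path to a nearby endpoint with at most $|s_1 - s_2| + |y_1 - y_2|$ additional steps, each contributing at most $c$ in absolute value. For rational $(t, x) \in \cone$ I write $(t, x) = (t_0, x_0)/k$ with integer $(t_0, x_0)$; applying Kingman along the subsequence $n = jk$ defines $\rectShape(t, x) := k^{-1}\rectShape(t_0, x_0)$ (independent of the choice of $k$ by restricting to a common refinement), and the Lipschitz estimate bridges from the subsequence $n = jk$ to all integers $n$. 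Intersecting the countably many a.s.\ events over the rational points of $\cone$ produces a single event $\Omega_1$ of full measure on which the limit exists for every rational direction. Subadditivity, positive homogeneity and symmetry of $\rectShape$ on rationals are inherited from the prelimit, and subadditivity implies the matching Lipschitz bound $|\rectShape(t_1, x_1) - \rectShape(t_2, x_2)| \le c(|t_1 - t_2| + |x_1 - x_2|)$, so $\rectShape$ extends continuously to all of $\cone$. The uniform Lipschitz estimate on $n^{-1}\rectAction([nt], [nx])$ then upgrades the rational convergence to convergence at every real $(t, x) \in \cone$ on the same $\Omega_1$.

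The remaining claims are routine in comparison. Subadditivity is the limit of the concatenation inequality, convexity is subadditivity together with positive homogeneity, and symmetry $\rectShape(t, -x) = \rectShape(t, x)$ follows because the map $y \mapsto -y$ induces a measure-preserving bijection of paths that preserves the distribution of the action (using evenness of the law of $F$). For the range, $\rectShape(t,x) \ge -ct$ is immediate from $|\rectAction([nt],[nx])| \le c[nt]$, while the non-positivity $\rectShape(t,x) \le 0$ is obtained by constructing a trial path that travels ballistically to the closest edge $\{y, y+1\}$ with small discrepancy and then uses $\eta_y$ on that edge: the cost of the first, $O(1)$-length leg is $O(1)$, while the stay on $\{y,y+1\}$ contributes at most $(n - O(1))(d(y) - c)$ by \eqref{eq:eta_lower_bound}, which is $\le 0$ for discrepancies $d(y) \le c$. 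The delicate point, and the main obstacle in this scheme, is not Kingman itself but the simultaneous a.s.\ statement: each directional application of Kingman carries its own null set, and it is precisely the deterministic Lipschitz estimate on $\rectAction$ that allows the uncountably many directions to be absorbed into the single countable family indexed by rationals.
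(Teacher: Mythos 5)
Your overall architecture (Kingman along lattice directions, then approximation) matches the paper's, but two of your key steps do not hold as stated. The central problem is the deterministic two-sided Lipschitz estimate $|\rectAction(s_1,y_1)-\rectAction(s_2,y_2)|\le c(|s_1-s_2|+|y_1-y_2|)$. Padding an optimal path only works when the second endpoint lies in the \emph{forward} cone of the first, and then it yields only the one-sided bound $\rectAction(s_1,y_1)\ge \rectAction(s_2,y_2)-c(s_2-s_1)$ for $(s_2-s_1,y_2-y_1)\in\cone$ (this is exactly \eqref{eq:action_bound}); there is no padding that compares two endpoints at the same time, and the two-sided inequality is in fact false. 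For instance, on the positive-probability event $B(1)=\cdots=B(s)=+1$ with $F(x^*)\approx -c$ at a single site $x^*\le s-3$ and $F\approx c$ at the other sites of $[1,s]$, one has $\rectAction(s,s)=\sum_{y\le s}F(y)$, while waiting two steps at $x^*$ gives $\rectAction(s,s-2)\le \sum_{y\le s-2}F(y)+2F(x^*)\approx \rectAction(s,s)-3c$, already beating your bound $2c$; with generic randomness, optimizing the waiting site makes the defect grow with $s$. Since this estimate is the linchpin of your passage from rational directions to all of $\cone$ (and of the claimed $c$-Lipschitz continuity of the prelimit and of $\rectShape$ -- which is itself doubtful near the boundary $|x|=t$, where the slope in $x$ need not be bounded by $c$), that step collapses. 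The paper's route avoids it: only the forward-cone bound \eqref{eq:action_bound} is used, first to sandwich $\rectAction(\round{nt},\round{nx})$ between values at the lattice points $mq(t,x)$ and $(m+1)q(t,x)$ on the same rational ray, then, for irrational $(t,x)$, between rational points chosen strictly inside the backward and forward cones of $(t,x)$; continuity on the interior comes from convexity, and the boundary rays $x=\pm t$ are treated by hand.

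The second gap is the ergodicity claim. For directions $(t_0,0)$ the transformation $\theta$ shifts $B$ but leaves $F$ fixed, so it is \emph{not} ergodic on the product space (every event in $\sigma(F)$ is invariant), and Kingman only gives an almost sure limit that is a priori a function of $F$; your scheme has no mechanism to show it is constant other than the flawed Lipschitz step. The paper handles $x=0$ by a separate argument (Lemma~\ref{lem:shape-at-0}), building trial paths that sit on edges of arbitrarily small discrepancy to identify the limit as $-ct$ deterministically. For $x_0\neq 0$ your conclusion is correct, but ``product of i.i.d.\ shifts, hence ergodic'' glosses over the fact that a product of ergodic maps need not be ergodic; one needs mixing of the factors (or the paper's explicit independence-of-distant-blocks argument). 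Both defects are repairable -- replace the two-sided Lipschitz bound by the cone-monotonicity sandwich, and treat the $x=0$ directions (more precisely, the deterministicity of their limits) separately -- but as written the proposal does not prove the lemma.
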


The basis of the proof is the subadditivity of path actions, which is clear from construction:,  
	\begin{align}\label{eqsuperadditivity}
		\rectAction(t_1, x_1) + \rectAction \left( (t_1, x_1), (t_1 + t_2, x_1 + x_2) \right) \geq \rectAction(t_1 + t_2, x_1 + x_2),\quad \text{ for all } (t_1, x_1), (t_2, x_2) \in \cone \cap \Z^2,
	\end{align}
 where we use the notation defined in~\eqref{eq:p2p-min-action}.
This allows us to use Kingman's subadditive ergodic theorem. See Appendix~\ref{sec:appendix}
for the detailed proof.

Let $\shape(x)=-\rectShape(1,x)$. As a straightforward consequence of Lemma~\ref{th:shape_function_exists}, we obtain the following statement. 

\begin{lemma}\label{lem:shape-exists}
There is an even concave  function $\shape:[-1,1]\to [0,c]$ such that with probability~1, for all $x\in [-1,1]$, 
\begin{equation*}
\shape(x)=-\lim_{n\to\infty} \frac{\newbar{A}(B,F;n,\lfloor nx\rfloor )}n.
\end{equation*}
\end{lemma}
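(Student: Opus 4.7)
The plan is to set $\shape(x) = -\rectShape(1,x)$ for $x \in [-1,1]$ and to read off every claimed property directly from Lemma~\ref{th:shape_function_exists}. The crucial preliminary observation is that for every $x \in [-1,1]$ the point $(1,x)$ lies in $\cone$, so $\rectShape(1,x)$ is well defined and takes a value in $[-c,0]$. Consequently $\shape$ maps $[-1,1]$ into $[0,c]$.

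For the almost sure convergence claim, I would work on the full probability event $\Omega_1$ produced by Lemma~\ref{th:shape_function_exists}. Taking $t=1$ in \eqref{eq:LLN_shape_function}, one obtains, for every $(B,F) \in \Omega_1$ and every $x \in [-1,1]$,
\[
\lim_{n\to\infty} \frac{\newbar{A}(B,F;n,\lfloor nx \rfloor)}{n} = \rectShape(1,x) = -\shape(x),
\]
where I have used $\lfloor n \rfloor = n$ for $n \in \N$. The key point here is that $\Omega_1$ does not depend on $x$, so the convergence holds simultaneously for \emph{all} $x \in [-1,1]$ on a single event of probability one, as the lemma requires.

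The remaining properties are immediate from their counterparts for $\rectShape$. Symmetry of $\rectShape$ in its second argument gives $\shape(-x) = -\rectShape(1,-x) = -\rectShape(1,x) = \shape(x)$, so $\shape$ is even. For concavity, take $x_1,x_2 \in [-1,1]$ and $r \in [0,1]$. Applying the convexity inequality of Lemma~\ref{th:shape_function_exists} at the two points $(1,x_1)$ and $(1,x_2)$,
\[
r\rectShape(1,x_1) + (1-r)\rectShape(1,x_2) \ge \rectShape\bigl(1,\, rx_1 + (1-r)x_2\bigr),
\]
and multiplying by $-1$ gives $r\shape(x_1) + (1-r)\shape(x_2) \le \shape(rx_1 + (1-r)x_2)$, which is concavity.

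There is no serious obstacle in this argument: all the work has already been done in Lemma~\ref{th:shape_function_exists} via Kingman's subadditive ergodic theorem. The only point that deserves a moment of care is ensuring that the almost sure statement is simultaneous over the continuum of slopes $x \in [-1,1]$; this is handled for free because Lemma~\ref{th:shape_function_exists} has been stated with the ``for all $(t,x) \in \cone$'' quantifier inside the probability-one event $\Omega_1$, rather than as a pointwise-in-$x$ claim.
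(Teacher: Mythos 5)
Your proposal is correct and is essentially the paper's own argument: the paper simply sets $\shape(x)=-\rectShape(1,x)$ and declares the lemma a straightforward consequence of Lemma~\ref{th:shape_function_exists}, exactly the route you take (with the useful extra care of noting that the probability-one event $\Omega_1$ is uniform in $x$). The only cosmetic discrepancy is that the lemma's statement uses $\lfloor nx\rfloor$ while \eqref{eq:LLN_shape_function} uses the round-toward-zero integer part $[nx]$; for $x<0$ these endpoints differ by at most one lattice site, which does not affect the linear-in-$n$ limit (and the paper glosses over this as well).
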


\begin{lemma}\label{lem:shape-continuous}
We have $\shape(1)=\shape(-1)=0$ and $\shape$ is continuous on $[-1,1]$.
\end{lemma}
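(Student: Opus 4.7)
My plan has three steps: compute $\shape(\pm 1)$ directly, obtain continuity on the open interval $(-1,1)$ by concavity, and then prove continuity at the two endpoints via a concentration argument.

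For the values at $\pm 1$, I observe that $\Gamma(n,n)$ consists of the unique diagonal path $\gamma(i)=i$ (any lazy walk covering displacement $n$ in $n$ steps must take $n$ up-steps), so
\[
\newbar{A}(n,n)=\sum_{i=1}^{n} B(i)F(i),
\]
a sum of i.i.d.\ random variables with mean zero (by independence of $B$ and $F$) and variance at most $c^{2}$. The strong law of large numbers gives $\newbar{A}(n,n)/n\to 0$ almost surely, hence $\shape(1)=0$; evenness of $\shape$ from Lemma~\ref{lem:shape-exists} then gives $\shape(-1)=0$.

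Since $\shape$ is concave on $[-1,1]$, it is continuous on the open interval $(-1,1)$, so only continuity at $\pm 1$ remains, and by evenness it suffices to treat $\alpha=1$. Concavity yields the easy inequality $\liminf_{\alpha\to 1^{-}}\shape(\alpha)\ge \shape(1)=0$, which together with $\shape\ge 0$ shows that this liminf equals $0$. The harder direction is the matching upper bound $\limsup_{\alpha\to 1^{-}}\shape(\alpha)\le 0$; concavity alone is insufficient, as a concave function on $[-1,1]$ may a priori jump downward at an endpoint. I will obtain this upper bound via a Hoeffding bound combined with a union bound over paths. Writing $k=\lfloor\alpha n\rfloor$ and $\delta=1-\alpha$, for each fixed $\gamma\in\Gamma(n,k)$ the quantity $A(B,F;\gamma)=\sum_i B(i)F(\gamma(i))$ is, conditionally on $F$, a sum of $n$ independent centered random variables of magnitude at most $c$, so Hoeffding's inequality combined with a union bound gives
\[
\Prob{\newbar{A}(n,k)\le -t}\le |\Gamma(n,k)|\exp\!\left(-\frac{t^{2}}{2nc^{2}}\right).
\]
A standard Cramér-type large deviation estimate for the sum of $n$ i.i.d.\ uniform $\{-1,0,1\}$ steps yields $|\Gamma(n,k)|\le \exp(n\psi(\delta))$ with $\psi(\delta)\to 0$ as $\delta\to 0$. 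Choosing $t=2cn\sqrt{\psi(\delta)}$ drives the right-hand side to $0$ as $n\to\infty$, and comparing with the almost sure convergence in Lemma~\ref{lem:shape-exists} yields $\shape(1-\delta)\le 2c\sqrt{\psi(\delta)}\to 0$, which completes the proof of continuity at $\alpha=1$.

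The main obstacle is the third step: a purely soft, concavity-based approach can at best yield $\liminf\ge 0$ and permits a downward jump at the endpoint, so the quantitative union bound over the combinatorially large set $\Gamma(n,k)$ is genuinely needed. It is sharp enough because, as $\alpha\to 1$, the entropy $\log|\Gamma(n,k)|$ becomes of smaller order than the Hoeffding scale $nc^{2}$, and this slack is exactly what makes the resulting bound on $\shape(\alpha)$ vanish in the limit.
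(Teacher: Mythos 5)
Your proposal is correct, and its overall architecture matches the paper's: evaluate $\shape(\pm 1)$ using the unique ballistic path, get interior continuity from concavity, and handle the endpoint by a union bound over $\Gamma(n,\lfloor(1-\delta)n\rfloor)$ in which the vanishing path entropy is beaten by a Hoeffding-type tail. The one genuine difference is inside the per-path concentration step. The paper cannot apply Hoeffding directly to $\sum_i B(i)F(\gamma(i))$ under the joint law of $(B,F)$, because a path revisits sites and the summands are then dependent; it works around this by keeping only the first-visit contributions $\sum_{j} F(j)B(t_j)$, which are i.i.d., and paying a crude extra cost $-\varepsilon nc$ for the remaining $\varepsilon n$ steps (this is the source of the $c\varepsilon$ term in the bound $\shape(1-\varepsilon)\le c\varepsilon+(1-\varepsilon)\lambda(\varepsilon)$). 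You instead condition on $F$: given $F$, the terms $B(i)F(\gamma(i))$ are independent, centered, and bounded by $c$ regardless of revisits, so Hoeffding applies to all $n$ terms with a bound uniform in $F$, and no first-visit decomposition or leftover term is needed. Your entropy bound $|\Gamma(n,k)|\le e^{n\psi(\delta)}$ with $\psi(\delta)\to0$ is the same estimate the paper derives by explicit binomial counting (their $\psi(\varepsilon)=-2\varepsilon\log\varepsilon$); invoking the rate function for i.i.d.\ $\{-1,0,1\}$ steps is an equally valid way to get it, though in a final write-up you should either state that bound explicitly or reproduce the short counting argument. Net effect: your conditioning trick is a mild simplification yielding $\shape(1-\delta)\le 2c\sqrt{\psi(\delta)}$ directly, while the paper's route produces an explicit elementary bound without appealing to conditional concentration; both give exactly what is needed, namely $\limsup_{\alpha\to1^-}\shape(\alpha)\le 0$, which together with $\shape\ge0$ and concavity closes the argument. (Minor cosmetic point: for $\shape(1)=0$ the strong law suffices, as you say; the paper's reference to the CLT is just another way to see the action is $o(n)$.)
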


\begin{proof}
The first claim follows since for $\alpha=1$ there is only one admissible path $\gamma_n$ terminating at point $n$ at time $n$, and its action is of the order of $n^{1/2}$ by the Central Limit Theorem. 

Since $\shape$ is concave, $\shape$ is continuous on $(-1,1)$. It remains to prove
$\lim_{x\to 1} \shape(x)=\shape(1)=0$. 
This will follow once we prove that
\begin{equation}\label{eqboundarycontinuity}
    \shape(1-\varepsilon)\le c\varepsilon+(1-\varepsilon)\lambda(\varepsilon)\text{ for all }\varepsilon<\frac{1}{2e},\end{equation}
where $\lambda(\varepsilon)=4c\sqrt{-\frac{\varepsilon}{1-\varepsilon}\log(\varepsilon)}$. 
By the union bound, we have
\begin{equation}
    \label{eqContUnion}\Probbig{\newbar{A}(n,(1-\varepsilon)n) \le -\varepsilon cn-(1-\varepsilon)\lambda(\varepsilon) n}\le \sum_{\gamma\in\Gamma(n,(1-\varepsilon)n)} \Probbig{{A}(B,F;\gamma) \le -\varepsilon cn-(1-\varepsilon)\lambda(\varepsilon) n}.
\end{equation}

Any $\gamma\in\Gamma(n,(1-\varepsilon)n)$ must contain at least $(1-\varepsilon)n$ up steps. For each 
$v\in\{0,1,\ldots,n\}$, there are ${{n}\choose{v}} 2^{n-v}$ lazy walks of length $n$ starting at $0$ and having exactly $v$ up steps. Of course, not all of these walks will end up at $(1-\varepsilon)n.$ Thus,
using an elementary estimate
\[
{n\choose k} \le \Big(\frac{ne}{k}\Big)^k,\quad k=0,1,\ldots, n,
\]
we obtain that for large enough $n$, we have 
\begin{equation}\label{eqgammacount}|\Gamma(n,(1-\varepsilon)n)|\le \sum_{v=(1-\varepsilon)n}^n {{n}\choose{v}} 2^{n-v}\le 2^{\varepsilon n}n {{n}\choose{\varepsilon n}}\le 2^{\varepsilon n}n\left(\frac{ne}{n\varepsilon}\right)^{\varepsilon n}\le \exp(-2\varepsilon\log(\varepsilon) n),\end{equation}
where at the last inequality, we used the assumption that $\varepsilon<\frac{1}{2e}$.

Any $\gamma\in\Gamma(n,(1-\varepsilon)n)$ visits the sites $\{1,2,\dots,(1-\varepsilon)n\}$ at least once. Let $t_j$ be the first time $\gamma$ visits $j$. Then
\[A(B,F;\gamma)\ge \sum_{i=1}^{(1-\varepsilon)n} F(i)B(t_i)-\varepsilon n c.\]

Here $(F(i)B(t_i))_{i=1}^{(1-\varepsilon)n}$ are i.i.d. random variables, $\mathbb{E}F(1)B(t_1)=0$, and $|F(1)B(t_1)|\le c$. Thus,  by Hoeffding's inequality, we have
\begin{equation}\label{eqContHoeffding}\Probbig{A(B,F;\gamma)\le -\varepsilon nc-(1-\varepsilon)\lambda(\varepsilon) n}\le \Prob{\sum_{i=1}^{(1-\varepsilon)n} F(i)B(t_i)\le -(1-\varepsilon)\lambda(\varepsilon) n }\le \exp\left(-\frac{\lambda^2(\varepsilon) (1-\varepsilon)n}{2c^2}\right).
\end{equation}

Combining \eqref{eqContUnion}, \eqref{eqgammacount} and \eqref{eqContHoeffding}, we obtain that for all $\varepsilon<\frac{1}{2e}$, we have
\[\Probbig{\newbar{A}(n,(1-\varepsilon)n) \le -c\varepsilon n-(1-\varepsilon)\lambda(\varepsilon) n}\le \exp(-2\varepsilon\log(\varepsilon) n)\exp\left(-\frac{\lambda^2(\varepsilon) (1-\varepsilon)n}{2c^2}\right).\]
Since the right hand side converges to $0$ as $n$ tends to $\infty$, \eqref{eqboundarycontinuity} follows. 
\end{proof}

Theorem~\ref{shape function} follows now from Lemmas~\ref{lem:shape-exists} 
and~\ref{lem:shape-continuous}.



\begin{lemma}\label{lem:shape-at-0}
We have 
\[\lim_{n\to\infty} \frac{A^*(B,F;n,0)}{n}=-c\quad\text{ almost surely.}\]
Therefore, $\shape(0)=c$.
\end{lemma}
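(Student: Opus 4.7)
The plan is to prove the two inequalities $\liminf_{n\to\infty} A^*(B,F;n,0)/n \ge -c$ and $\limsup_{n\to\infty} A^*(B,F;n,0)/n \le -c$ separately, almost surely. The lower bound is immediate from $|B(i)F(\gamma(i))| \le c$, which forces $A^*(B,F;n,0) \ge -cn$ deterministically. The substance is therefore in the upper bound, and for that I will exhibit a near-optimal path explicitly using a nearby edge of small discrepancy.

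Fix a rational $\eps > 0$. Since $F$ is i.i.d.\ with a continuous density whose support contains points arbitrarily close to $\pm c$, Lemma~\ref{lem:discrep_density_near_0} gives $\Pp(d(0) < \eps) > 0$, and stationarity/independence of $(d(x))_{x \ge 0}$ imply that $T_\eps := \min\{x \ge 0 : d(x) < \eps\}$ is almost surely finite. Working on a single almost sure event where $T_\eps < \infty$ for every rational $\eps > 0$ and the strong law of large numbers applies to $B$, I construct, for each $n > 2T_\eps + 1$, a path $\gamma_n \in \Gamma(n,0)$ in three stages: walk from $0$ to $T_\eps$ using $T_\eps$ up-steps; follow $\eta_{T_\eps}$ for $n - 2T_\eps - 1$ steps; and finally return to $0$ in $T_\eps + 1$ steps (allowing one lazy step if $\eta_{T_\eps}$ leaves us at $T_\eps$ rather than $T_\eps + 1$ so that the endpoint and parity match).

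The action of the two travel stages is bounded by $2cT_\eps + c$ in absolute value, which is $o(n)$. For the middle stage, identity~\eqref{eq:eta_action2} gives
\[
\Bigl|\sum_{i=T_\eps+1}^{n-T_\eps-1} B(i)F(\eta_{T_\eps}(i)) - (n-2T_\eps-1)\bigl(\tfrac{1}{2}d(T_\eps) - c\bigr)\Bigr| \le \Bigl|N^+(T_\eps, n-T_\eps-1) - \tfrac{n-2T_\eps-1}{2}\Bigr| \cdot d(T_\eps),
\]
and the right-hand side is $o(n)$ almost surely by the SLLN for $B$. Dividing by $n$ and letting $n \to \infty$ yields
\[
\limsup_{n\to\infty} \frac{A^*(B,F;n,0)}{n} \le \limsup_{n\to\infty} \frac{A(B,F;\gamma_n)}{n} \le \tfrac{1}{2}d(T_\eps) - c < \tfrac{\eps}{2} - c.
\]
Letting $\eps \downarrow 0$ along rationals gives $\limsup A^*(B,F;n,0)/n \le -c$ almost surely, and combined with the trivial lower bound the limit equals $-c$.

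The conclusion $\shape(0) = c$ is then immediate from Lemma~\ref{lem:shape-exists} (or from the definition $\shape(0) = -\rectShape(1,0)$). The only mildly delicate part is the bookkeeping in the path construction to enforce $\gamma_n(n) = 0$ exactly, which is handled by inserting a single lazy step when $\eta_{T_\eps}$ finishes at the "wrong" endpoint; this correction contributes at most $c$ to the action, hence nothing after dividing by $n$. No novel estimate beyond the already-established identity~\eqref{eq:eta_action2} and finiteness of $T_\eps$ is required.
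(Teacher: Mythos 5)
Your proof is correct and follows essentially the same route as the paper: locate an edge of discrepancy below $\eps$ at an almost surely finite distance, travel to it ballistically, sit on it via $\eta_x$, and return, with the travel cost being $o(n)$. The only cosmetic difference is that you control the middle stage with~\eqref{eq:eta_action2} plus the strong law for $B$, whereas the paper uses the cruder deterministic bound~\eqref{eq:eta_lower_bound}, which already suffices.
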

\begin{proof}
Let $\varepsilon>0$. By the law of large numbers, almost surely there is an $x>0$ such that $d(x)\le \varepsilon$. 

For $n>2x$, let $\gamma_{n,x}$ move ballistically up to site $x$, optimally stay on the edge $\{x, x+1\}$ (following $\eta_{x}$ as defined in~\eqref{eq:eta}) as long as possible, and then move ballistically back down to~$0$.  
That is, 
\[
\gamma_{n,x}(i) = \begin{cases}
i, & 0 \leq i \leq x, \\
\eta_{x}(i), & x < i < n-x, \\
n-i, &  n-x \leq i \leq n.
\end{cases}
\]
Using \eqref{eq:eta_lower_bound}, we see that
\[A(B,F;\gamma)\le (2x+1)c-(n-2x-1)(c-d(x))\le -n(c-2\varepsilon)\]
for all large enough $n$. Tending to $\varepsilon$ with $0$, the statement follows.
\end{proof}

\begin{lemma} \label{lem:corner} For all $\alpha\in [-1,1]$,
	\begin{align*}
		\label{eq:rect_shape_bound}
		\shape(\alpha) \leq c - |\alpha|(c-D),
	\end{align*}
	where $D = \Exp |\spacevar{0}|$.
 Thus, $\shape$ has a corner at  $\alpha=0$.
\end{lemma}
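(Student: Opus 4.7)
The plan is to establish, uniformly over all paths, the lower bound
\[
A(B,F;\gamma) \;\ge\; -cn + \alpha n(c-D) - o(n) \qquad \text{a.s., } \gamma \in \Gamma(n,[\alpha n]),
\]
which upon minimizing in $\gamma$ and dividing by $-n$ produces the claimed inequality $\shape(\alpha) \le c - \alpha(c-D)$ via Lemma~\ref{lem:shape-exists}. By the evenness of $\shape$ provided by Lemma~\ref{th:shape_function_exists}, it suffices to treat $\alpha \ge 0$.

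For any $\gamma \in \Gamma(n,[\alpha n])$, set $T_x = |\{1\le i\le n : \gamma(i)=x\}|$ and $S_x = \sum_{i:\gamma(i)=x}B(i)$, so that $\sum_x T_x = n$, $|S_x|\le T_x$, and $A(B,F;\gamma) = \sum_x F(x) S_x$. Since $|F(x)|\le c$, this gives
\[
A(B,F;\gamma) \;\ge\; -\sum_x |F(x)|\,T_x \;=\; -cn + \sum_x\bigl(c-|F(x)|\bigr)T_x.
\]
The decisive structural point is that any lazy walk joining $0$ and $[\alpha n]$ must pass through each integer $x \in \{1,2,\dots,[\alpha n]\}$, so $T_x\ge 1$ at those sites. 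Combining this with the nonnegativity of $c-|F(x)|$ gives the $\gamma$-independent bound
\[
A(B,F;\gamma) \;\ge\; -cn + \sum_{x=1}^{[\alpha n]}\bigl(c-|F(x)|\bigr).
\]

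The remaining input is the strong law of large numbers for the i.i.d.\ sequence $(c-|F(x)|)_{x\ge 1}$ of mean $c-D$: almost surely, $\sum_{x=1}^{[\alpha n]}(c-|F(x)|) = \alpha n(c-D) + o(n)$. Substituting and taking the minimum over $\gamma$ yields the desired inequality for $\shape(\alpha)$. The corner property then follows at once: by evenness and concavity, the right derivative of $\shape$ at $0$ is at most $-(c-D)$ while the left derivative is at least $c-D$, and $c-D>0$ under assumption~\eqref{p(x)}. I do not anticipate any serious obstacle; the only point to check is that the $o(n)$ remainder in the law of large numbers is uniform over the exponentially many candidate paths, and this is automatic because the quantity we subtracted depends on $F$ alone.
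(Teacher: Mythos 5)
Your proof is correct and takes essentially the same route as the paper's: both rest on the observation that any lazy path ending at $[\alpha n]$ must visit every site $1,\dots,[\alpha n]$, gaining $c-|F(x)|$ over the trivial per-step bound $-c$ at each such site, so that $\newbar{A}(B,F;n,[\alpha n])\ge -cn+\sum_{x=1}^{[\alpha n]}\bigl(c-|F(x)|\bigr)$. The only (immaterial) difference is in the final step: the paper bounds the first-visit contributions and takes expectations, using the convergence of $n^{-1}\Exp\newbar{A}$, while you invoke the strong law of large numbers for the path-independent lower bound, which is equally valid.
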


\begin{proof}
	By symmetry, we may assume that $\alpha\ge 0$. The path $\newbar{\gamma}=\argmin_{\gamma\in \Gamma(n,nx)} A(B,F,\gamma)$ must visit every site between $0$ and $nx$. For $1\le k\le n\alpha$, let $t_k$ be the first time $\newbar{\gamma}$ visits~$k$. Then   
 \[\newbar{A}(B,F;n,n\alpha)\ge \sum_{k=1}^{n\alpha} B(t_k)F(k)-n(1-|x|)c \ge -\sum_{k=1}^{n\alpha} |F(k)|-n(1-|\alpha|)c. \]
 Taking expectation, we obtain
 \[\mathbb{E}\newbar{A}(B,F;n,n\alpha)\ge- n( c - |\alpha|(c-D)) .\]
 Thus, the statement follows.
\end{proof}

Relations~\eqref{eq:shape-at-0}, \eqref{eq:shape-at-1}, \eqref{eq:rect_shape_bound1} of Theorem~\ref{shape function non-linear} follow from Lemmas \ref{lem:shape-continuous}, \ref{lem:shape-at-0} \ref{lem:corner}. To complete the proof of Theorem~\ref{shape function non-linear}, it remains to check the 
nonlinearity relation~\eqref{eq:nonlinearity}. 

\subsection{Nonlinearity of $\shape$ }\label{sec:nonlinearity}

Let us find a lazy walk $\gamma$ from site $0$ to site $n$ by the following algorithm.

\begin{algorithmic}
\State $\gamma(0):= 0$; \quad $t:=0$
\For{$i=1..n$} 
\If {$F(i-1)<-\frac{3}4 c,\quad F(i)>\frac{3}4 c,\quad B(t+1)=1$}
    \State $\gamma(t+1):=i-1$
    \State $\gamma(t+2):=i$
    \State $t:=t+2$
    
\Else
    \State $\gamma(t+1):=i$
    \State $t:=t+1$
    
\EndIf
\EndFor
\end{algorithmic}

Let us introduce the random variables 
\begin{align*}
T_i&=\min\{k: \gamma(k)= i\},\\
 A_i&=\sum_{j=T_{i-1}+1}^{T_i} F(\gamma(j))B(j)
\end{align*}
and the events
\[\mathcal{B}_i=\left\{F(i-1)<-\frac{3}4 c,\,F(i)>\frac{3}4 c\right\}. \]

Observe that
\[A_i=\begin{cases}
F(i-1)+F(i)&\text{on the event }\mathcal{B}_i\cap\{B(T_{i-1}+1)=1,\ B(T_{i-1}+2)=1\},\\
F(i-1)-F(i)&\text{on the event }\mathcal{B}_i\cap\{B(T_{i-1}+1)=1,\ B(T_{i-1}+2)=-1\},\\
-F(i)&\text{on the event }\mathcal{B}_i\cap\{B(T_{i-1}+1)=-1\}.
\end{cases}\]




Note that, conditioned on $F(i-1)$ and $F(i)$, the distribution of $(B(T_{i-1}+1),B(T_{i-1}+2))$ is uniform on $\{-1,1\}^2$.

Thus,
\begin{align*}\mathbb{E}\big(\mathbbm{1}_{\mathcal{B}_i}A_i\,|\,F(i-1),F(i)\big)&=\mathbbm{1}_{\mathcal{B}_i}\left(\frac{1}4(F(i-1)+F(i))+\frac{1}4(F(i-1)-F(i))+\frac{1}2(-F(i))\right)\\&=\mathbbm{1}_{\mathcal{B}_i}\frac{F(i-1)-F(i)}2\le 
-\mathbbm{1}_{\mathcal{B}_i}\frac{3}{4}c.\end{align*}

A similar calculation shows that
\[\mathbb{E}(\mathbbm{1}_{\mathcal{B}_i^c}A_i\,|\,F(i-1),F(i))=0.\]

Therefore, denoting $\probBi=\Prob{\mathcal{B}_i}$ (it does not depend on $i$), we have 
\[\mathbb{E}A_i\le -\probBi\frac{3}{4}c,\]
and
\begin{equation}
\label{eq:beginning-part-action}
\mathbb{E} \sum_{j=1}^{T_n} F(\gamma(j))B(j)=\sum_{i=1}^n \mathbb{E} A_i\le -n\probBi\frac{3}4c.    
\end{equation}

Note that the random variables $A_i$ indexed only by even values of $i$ are mutually independent. The same is true for the odd values of $i$. Thus, by Hoeffding's inequality, for all fixed $\varepsilon>0$,  we have
\[ \sum_{j=1}^{T_n} F(\gamma(j))B(j)\le -\left(\frac{3}{4}\probBi-\varepsilon\right) c n\]
with high probability.

Note that $T_i-T_{i-1}=1$ unless $F(i-1)<-\frac{3}4 c$, $F(i)>\frac{3}4 c$, $B(T_{i-1}+1)=1$,  when $T_i-T_{i-1}=2$. Thus,

\[\mathbb{E} T_n=\sum_{i=1}^n \mathbb{E}(T_{i}-T_{i-1})=\left(1+\frac{\probBi}2\right)n.\]

Applying  Hoeffding's inequality the same way as above, we see that if $\varepsilon>0$ fixed, then  with high probability $T_n<\ell n$, for $\ell=1+\frac{\probBi}2+\varepsilon$. On this event, let us define  $\gamma(t)=n$ for all $T_n<t\le \ell n$. From the law of large numbers $|\sum_{t=T_n+1}^{\ell n} F(\gamma(t))B(t)|\le \varepsilon cn$ with high probability. Combining this with~\eqref{eq:beginning-part-action}, we obtain that with high probability there is a walk of length $\ell n$ and 
action at most $-\delta c n$, where $\delta=\frac{3}{4}\probBi-2\varepsilon$. 

It follows that 
\[\shape\left(\frac{1}\ell\right)\ge \frac{\delta c}{\ell}>\left(1-\frac{1}\ell\right)c=\left(1-\frac{1}\ell\right)\shape(0)+\frac{1}\ell \shape(1),\]
where the inequality in the middle is true, if we choose $\varepsilon>0$ to satisfy $\varepsilon<\frac{1}{12}\probBi$ guaranteeing
\[\frac{\frac{3}{4}\probBi-2\varepsilon}{1+\frac{\probBi}2+\varepsilon}>\frac{\frac{\probBi}2+\varepsilon}{1+\frac{\probBi}2+\varepsilon}.\]

This shows that $\shape$ is not a linear function on $[0,1]$, completing the proof of Theorem~\ref{shape function non-linear}. \epf

\section{The existence of a linear segment of $\shape$ -- The proof of Theorem~\ref{shape function linear}}\label{seclinear}

By replacing $\kappa$ with a smaller number, we may assume that there is $\kappa\in\left(0,\frac{1}3\right)$ such that
\begin{equation}
\label{p(x)2}
\varrho(u)\le \kappa^{-1}(c-u)^{\kappa}\text{ for all }u\in[0,c].
\end{equation}
In this section, we will use this assumption in place of \eqref{p(x)}.
\subsection{The main idea of the proof}

 Let $\ell>1$, and let $\newbar{\gamma}=\argmin_{\gamma\in \Gamma(\ell n,n)} A(B,F;\gamma)$. Assume that for $1\le t_0<\ell$ we can find a (random) sequence $0=u_0<u_1<\cdots<u_{t_0n}\le \ell n$ such that $\gamma$ defined by $\gamma(t)=\newbar{\gamma}(u_t)$ 
 is again a lazy walk from $0$ to $n$ of length $t_0n$, in other words, $\gamma$ is obtained from $\newbar{\gamma}$ by removing a few loops.
 Note that 
 \begin{equation}\label{eqheur1}\sum_{j=1}^{t_0n} B(u_j)F(\gamma(j))=\sum_{j=1}^{t_0n} B(u_j)F(\newbar{\gamma}(u_j))\le \sum_{i=1}^{\ell n} B(i)F(\newbar{\gamma}(i))+(\ell-t_0)nc.
 \end{equation}
Assume that the distribution of the random environment $\Phi'(t,x)=B(u_{t})F(x)$ is the same as the distribution of the environment $\Phi(t,x)=B(t)F(x)$, or at least close to it. As the choice of $u_0,u_1,\dots$ might depend on the $B$ and $F$, it is not at all clear that this assumption is justified.   
Under the assumption above, \eqref{eqheur1} implies that
\[\mathbb{E}_{B,F}\newbar{A}(B,F;t_0 n,n)\le \mathbb{E}_{B,F}\newbar{A}(B,F;\ell n,n)+(\ell-t_0)nc.\]
Dividing by $n$ and taking limit, one would obtain that
\[t_0\shape\left(\frac{1}{t_0}\right)\ge \ell \shape\left(\frac{1}{\ell}\right)-(\ell-t_0)c,\]
that is,
\[
\shape\left(\frac{1}{\ell}\right)\le \frac{t_0}{\ell}\shape\left(\frac{1}{t_0}\right)+
\left(1-\frac{t_0}{\ell}\right)c,
\]
which combined with the concavity of $\shape$ and the fact that $\shape(0)=c$ would imply that $\shape$ is linear on $\left[0,\frac{1}{t_0}\right]$.

Thus, the main difficulty in the proof is that distribution of $\Phi$ and $\Phi'$ might not be the same. To overcome this difficulty, one tool will be the concentration result presented in  Section~\ref{secconc}.  Another useful fact is that in the estimate \eqref{eqheur1}, the term $(\ell-t_0)nc$ can be replaced with a somewhat smaller term if we have some additional information on what sites the deleted loops visited. To control this term, we will introduce a modified discrepancy parameter $d^*$ in Section~\ref{secweight}. The significance of our assumptions on the density $\varrho$ is that it provides an upper bound on the number of sites with low discrepancy. We also need to be able to decompose the walk $\newbar{\gamma}$ into loops in a way that we have a good understanding of the discrepancies of the sites visited by these loops. We present our loop decomposition algorithm in Section~\ref{secloops}. 

To carry out the proof outlined above, we need to choose $\ell$ to be large enough.

\subsection{Choosing the weights $f$}\label{secweight}

As simple corollary of Theorem~\ref{shape function}, we have
\begin{lemma}\label{lemma2}
For any $t\ge 1$, we have
\[\lim_{n\to\infty} \frac{\newbar{A}(B,F;\lceil tn\rceil ,n )}n=-t \shape\left(\frac{1}t\right) \text{ almost surely.}\]
\end{lemma}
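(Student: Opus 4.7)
My plan is to reduce this directly to Theorem~\ref{shape function} by a reparametrization of the endpoint. Set $m_n := \lceil tn\rceil$, so that $m_n/n\to t$. The key observation is that because $t\ge 1$, the inequalities $tn\le m_n<tn+1$ give $n\le m_n/t<n+1/t\le n+1$, so with the ``round-toward-zero'' convention for $[\cdot]$ from the Notations subsection we have $[m_n/t]=\lfloor m_n/t\rfloor=n$ for every $n$.

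Next I would apply Theorem~\ref{shape function} with slope $\alpha=1/t\in(0,1]$ along the deterministic subsequence $(m_n)_{n\ge 1}$. On the full-measure event $\Omega_0$ provided by that theorem,
\[
\frac{\newbar{A}(B,F;m_n,[m_n/t])}{m_n}\;\longrightarrow\;-\shape\!\left(\tfrac{1}{t}\right).
\]
By the previous paragraph, $[m_n/t]=n$, so the left-hand side equals $\newbar{A}(B,F;\lceil tn\rceil,n)/m_n$. Multiplying by $m_n/n\to t$ yields
\[
\frac{\newbar{A}(B,F;\lceil tn\rceil,n)}{n}\;=\;\frac{\newbar{A}(B,F;m_n,n)}{m_n}\cdot\frac{m_n}{n}\;\longrightarrow\;-t\,\shape\!\left(\tfrac{1}{t}\right),
\]
as desired.

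There is no real obstacle; the only non-mechanical point is the endpoint identity $[m_n/t]=n$, which genuinely uses $t\ge 1$ together with the paper's sign convention for $[\cdot]$. The boundary case $t=1$ is a compatible degenerate instance: then $m_n=n$, $\shape(1)=0$, and the unique path in $\Gamma(n,n)$ has action $\sum_{i=1}^n B(i)F(i)$, which is $o(n)$ almost surely by the strong law of large numbers, matching the right-hand side.
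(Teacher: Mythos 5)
Your proof is correct and follows essentially the same route as the paper, which states Lemma~\ref{lemma2} as a direct corollary of Theorem~\ref{shape function}: you pass to the subsequence $m_n=\lceil tn\rceil$ with slope $\alpha=1/t\in(0,1]$ and rescale by $m_n/n\to t$. The endpoint identity $[m_n/t]=n$ (valid precisely because $t\ge 1$) is exactly the small arithmetic detail the paper leaves implicit, and your verification of it, as well as of the boundary case $t=1$, is accurate.
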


\begin{lemma}\label{lemma3}
For $\mathbb{P}_F$-almost all choices of $f:\mathbb{Z}\to [-c,c]$, we have that
 if $t_n\ge 1$ is a sequence converging to a limit $t$, then
\begin{equation}
    \label{eq:conv_to_-ta(1t)0}
    \lim_{n\to\infty} \frac{\newbar{A}(B,{f};t_n n,n )}n=-t \shape\left(\frac{1}t\right)\qquad\text{ $\mathbb{P}_B$-almost surely},
    \end{equation}
and
 \begin{equation}
    \label{eq:conv_to_-ta(1t)}
    \lim_{n\to\infty} \frac{\mathbb{E}_{B}\newbar{A}(B,{f};t_n n,n )}n=-t \shape\left(\frac{1}t\right).
    \end{equation}
\end{lemma}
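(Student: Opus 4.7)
The plan is to combine three ingredients: a Fubini argument (to transfer the convergence in Lemma~\ref{lemma2} from joint $(B,F)$-measure to fixed $f$), a one-sided Lipschitz estimate for $s\mapsto \newbar{A}(B,f;s,n)$, and the continuity of $s\mapsto s\shape(1/s)$ on $[1,\infty)$.

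First, I would apply Lemma~\ref{lemma2} separately for each rational $t\ge 1$: each yields a full-$\Pp_{B,F}$-measure event, and the countable intersection over rationals gives a single event on which $n^{-1}\newbar{A}(B,F;\lceil tn\rceil,n)\to -t\shape(1/t)$ for every rational $t\ge 1$ simultaneously. Fubini then furnishes a $\Pp_F$-full-measure set of environments $f$ such that, for each such $f$, this simultaneous convergence holds $\Pp_B$-almost surely; fix any such $f$ and let $\Omega_B^\star$ denote the corresponding $\Pp_B$-full-measure event.

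Second, I would record the trivial Lipschitz bound
\begin{equation*}
\newbar{A}(B,f;s_2,n) \le \newbar{A}(B,f;s_1,n) + c(s_2-s_1),\qquad |n|\le s_1\le s_2,
\end{equation*}
obtained by extending an optimal path of length $s_1$ ending at $n$ by $s_2 - s_1$ lazy steps at $n$, each contributing at most $c|f(n)|\le c$ to the action.

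Third, given an arbitrary sequence $t_n\to t$ with $t_n\ge 1$, I would pick rationals $t''\le t\le t'$ close to $t$ (with $t''=1$ if $t=1$); for all large enough $n$ we have $\lceil t''n\rceil \le [t_n n] \le \lceil t'n\rceil$, and two applications of the Lipschitz bound yield
\begin{align*}
\frac{\newbar{A}(B,f;\lceil t'n\rceil,n)}{n} - c(t'-t_n) - o(1) &\le \frac{\newbar{A}(B,f;[t_n n],n)}{n} \\
&\le \frac{\newbar{A}(B,f;\lceil t''n\rceil,n)}{n} + c(t_n - t'') + o(1).
\end{align*}
On $\Omega_B^\star$, taking $\liminf$ and $\limsup$ as $n\to\infty$ and then letting $t',t''\to t$ through rationals pinches the limit to $-t\shape(1/t)$ by continuity of $s\mapsto s\shape(1/s)$ on $[1,\infty)$, which is inherited from Theorem~\ref{shape function}. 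This gives~\eqref{eq:conv_to_-ta(1t)0}. Assertion~\eqref{eq:conv_to_-ta(1t)} then follows from~\eqref{eq:conv_to_-ta(1t)0} by the bounded convergence theorem, since $|n^{-1}\newbar{A}(B,f;[t_n n],n)|\le c\,t_n$ is uniformly bounded on any compact set of $t_n$'s.

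There is no substantive obstacle in this approach; the only mildly delicate points are performing the Fubini step to handle all rational $t$ on a single full-measure event, and checking the boundary case $t=1$, where the choice $t''=1$ is forced and one relies on the constraint $t_n\ge 1$ to guarantee $\lceil t''n\rceil\le [t_n n]$.
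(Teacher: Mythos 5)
Your proposal is correct and follows essentially the same route as the paper: transfer Lemma~\ref{lemma2} to fixed $f$ via Fubini over rational $t$, use the one-sided Lipschitz bound (the paper phrases it as monotonicity of $t\mapsto \newbar{A}(B,f;t,n)-tc$) to sandwich $[t_nn]$ between rational approximations, and conclude \eqref{eq:conv_to_-ta(1t)} by dominated (bounded) convergence. The paper leaves the sandwich and continuity details as ``easy to derive,'' which you have simply written out.
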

\begin{proof}
It follows from Lemma~\ref{lemma2} that for almost all choices of $f$, we have that for any rational $t\ge 1$, we have
 \begin{equation}\label{eqrac}\lim_{n\to\infty} \frac{\newbar{A}(B,f;t n,n )}n=-t \shape\left(\frac{1}t\right)\qquad\text{ $\mathbb{P}_B$-almost surely.}\end{equation}
 It is easy to derive~\eqref{eq:conv_to_-ta(1t)0}
from \eqref{eqrac} noting that the map $t\mapsto \newbar{A}(B,{f};t,n )-tc$ is monotone decreasing. Using the dominated convergence theorem, \eqref{eq:conv_to_-ta(1t)} follows from \eqref{eq:conv_to_-ta(1t)0}.
\end{proof}

Given a function $f:\mathbb{Z}\to [-c,c]$, we define

\[d_f^*(x)=2c+(-f(x)+\min(f(x-1),f(x),f(x+1))).\]

Since $F(x-1),F(x)$ and $F(x+1)$ are independent random variables with density $\varrho$, we have that for all $h\in[0,2c]$, 
\[
    \Prob{d_F^*(x)\le h}\le \Prob{F(x)\le -c+h}\Prob{F(x-1)\ge c-h}+\Prob{F(x)\le -c+h}\Prob{F(x+1)\ge c-h}.
\]

Using our condition \eqref{p(x)2} on the density $\varrho$, we see that
\[\Prob{F(x)\le -c+h}\le \int_0^h \kappa^{-1}u^\kappa\,du=\kappa^{-1}(1+\kappa)^{-1}h^{\kappa+1}.\]
The same estimate holds for the probabilities $\Prob{F(x-1)\ge c-h}$ and $\Prob{F(x+1)\ge c-h}$. Thus,
\[\Prob{d_F^*(x)\le h}\le 2\kappa^{-2}(1+\kappa)^{-2}h^{2(\kappa+1)}.\]

Combining this estimate with the law of large numbers and Lemma~\ref{lemma3}, we get the following lemma.

\begin{lemma}\label{lemmafchoice}
We can choose ${f}:\mathbb{Z}\to [-c,c]$ such that 
\begin{enumerate}
    \item If $t_n\ge 1$ is a sequence converging to a limit $t$, then
    \[\lim_{n\to\infty} \frac{\mathbb{E}_B\newbar{A}(B,{f};t_n n,n )}n=-t \shape\left(\frac{1}t\right).\]
    \item There is a constant $c_0$ such that for each positive integer $k$, we have
    \[\frac{|\{x\in\{1,2,\dots,n-1\}\,:\,d_f^*(x)\le \frac{2c}k\}|}{n}\le c_0 k^{-2(1+\kappa)} \]
    for all large enough $n$. 
\end{enumerate}
\end{lemma}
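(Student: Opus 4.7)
The plan is to realize $f$ as a typical sample of the random environment $F$ itself, by intersecting two probability-one events in the $F$-coordinate of the probability space. For condition (1), Lemma~\ref{lemma3} already provides a $\mathbb{P}_F$-almost sure event $\Omega^{(1)}$ on which the convergence $n^{-1}\mathbb{E}_B \newbar{A}(B,f;t_n n,n)\to -t\shape(1/t)$ holds for every real $t \ge 1$ and every sequence $t_n \to t$.

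For condition (2), I would first observe that the sequence $\{\mathbbm{1}(d_F^*(x) \le 2c/k)\}_{x \in \mathbb{Z}}$ is stationary and $2$-dependent, since $d_F^*(x)$ depends only on $F(x-1), F(x), F(x+1)$. Birkhoff's ergodic theorem, applied to the shift acting on the i.i.d.\ sequence $F$ (which is ergodic), therefore yields, $\mathbb{P}_F$-almost surely,
\[
\frac{|\{x \in \{1,\ldots,n-1\} : d_F^*(x) \le 2c/k\}|}{n} \longrightarrow \Prob{d_F^*(0) \le 2c/k}.
\]
The pointwise tail estimate established immediately before the lemma gives $\Prob{d_F^*(0) \le 2c/k} \le C_\kappa k^{-2(1+\kappa)}$ with $C_\kappa := 2\kappa^{-2}(1+\kappa)^{-2}(2c)^{2(1+\kappa)}$. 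Fixing any $c_0 > C_\kappa$, the inequality in (2) therefore holds eventually in $n$ on a $\mathbb{P}_F$-almost sure event $E_k$, and the countable intersection $\Omega^{(2)} := \bigcap_{k=1}^{\infty} E_k$ still has full measure.

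Any realization $f \in \Omega^{(1)} \cap \Omega^{(2)}$ then satisfies both (1) and (2), and the intersection is non-empty because it has full $\mathbb{P}_F$-measure. The argument is essentially routine; the only point to watch is that the tail bound supplies a single constant $C_\kappa$ that is \emph{uniform} in $k$, so one choice of $c_0$ works simultaneously for every $k$ appearing in the countable intersection. Had the coefficient in the tail bound depended on $k$, a diagonal argument would have been necessary, but the explicit polynomial shape of the estimate makes this unnecessary here.
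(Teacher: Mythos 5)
Your proof is correct and follows essentially the same route as the paper: condition (1) comes from the almost-sure event of Lemma~\ref{lemma3}, condition (2) from a law-of-large-numbers/ergodic argument for the stationary sequence $\mathbbm{1}(d_F^*(x)\le 2c/k)$ combined with the tail bound $\Prob{d_F^*(x)\le h}\le 2\kappa^{-2}(1+\kappa)^{-2}h^{2(\kappa+1)}$, and the choice of $f$ is made by intersecting the countably many full-measure events. Your observation that the constant in the tail bound is uniform in $k$, so a single $c_0$ suffices, is exactly the point that makes the paper's one-line argument work.
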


We will work with this deterministic $f$ through out the rest of the section. We will also drop the index $f$ from the notation $d_f^*$ and simply write $d^*$.

\subsection{Concentration and choosing the signs $b_n$}\label{secconc}

\begin{lemma}\label{lemmaMcDiarmid}
For any $\varepsilon\ge 0$, $n\in \mathbb{N}$ and $t\ge 1$ such that $tn\in\mathbb{N}$, we have
\[\ProbBig{|\newbar{A} (B,f;tn,n)-\mathbb{E}_B\newbar{A}(B,f;tn,n)|\ge \varepsilon n}\le 2\exp\left(-\frac{\varepsilon^2n}{2c^2t}\right).\]
\end{lemma}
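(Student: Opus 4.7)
The plan is to recognize this as a textbook application of McDiarmid's bounded differences inequality, with the random variables being the $tn$ independent signs $B(1),\ldots,B(tn)$ (remembering that $f$ is now deterministic after Lemma~\ref{lemmafchoice}). The function to which we apply the inequality is $g(b)=\newbar{A}(b,f;tn,n)$, viewed as a function of $b\in\{-1,+1\}^{tn}$.

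The first and only real step is to verify the bounded-differences property with constants $c_i=2c$ for every coordinate $i$. Let $b,b'\in\{-1,+1\}^{tn}$ agree in every coordinate except~$i$, and let $\gamma'\in\Gamma(tn,n)$ be any minimizer of $A(b',f;\cdot)$. Then, using $|f|\le c$,
\[
g(b)\le A(b,f;\gamma')=A(b',f;\gamma')+(b(i)-b'(i))\,f(\gamma'(i))=g(b')+(b(i)-b'(i))f(\gamma'(i))\le g(b')+2c.
\]
Swapping the roles of $b$ and $b'$ gives the reverse bound, hence $|g(b)-g(b')|\le 2c$ for any single-coordinate flip. This is the essential input; because the action decomposes as a sum with only one term depending on $B(i)$ at each coordinate, the exchange argument above is immediate and there is no genuine obstacle here.

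With the bounded-differences constants $c_i=2c$ for $i=1,\ldots,tn$, McDiarmid's inequality gives
\[
\Prob{|g(B)-\Exp_B g(B)|\ge \varepsilon n}\le 2\exp\left(-\frac{2(\varepsilon n)^2}{\sum_{i=1}^{tn}(2c)^2}\right)=2\exp\left(-\frac{2\varepsilon^2 n^2}{4c^2 tn}\right)=2\exp\left(-\frac{\varepsilon^2 n}{2c^2 t}\right),
\]
which is precisely the claimed bound. No further work is needed.
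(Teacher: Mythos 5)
Your proposal is correct and follows essentially the same route as the paper: verify the bounded-differences property with constant $2c$ by evaluating a minimizer for one sign sequence under the other, then apply McDiarmid's inequality, whose constants give exactly the stated bound.
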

\begin{proof}
 Let $b_1(1),\dots,b_1(tn)$ and $b_2(1),\dots,b_2(tn)$ be two $\pm 1$ sequences such that they only differ at one component.  Let $\newbar{\gamma}$ be $\argmin_{\gamma\in \Gamma(tn,n)} A(b_1,f;\gamma)$. Then
\[\newbar{A} (b_2,f;tn,n)\le A(b_2,f;\newbar{\gamma})\le A(b_1,f;\newbar{\gamma})+2c=\newbar{A}(b_1,f;tn,n)+2c.\]
Similarly, $\newbar{A}(b_1,f;tn,n)\le \newbar{A}(b_2,f;tn,n)+2c$. Thus, $|\newbar{A} (b_1,f;tn,n)- \newbar{A} (b_2,f;tn,n)|\le 2c$. The statement follows from McDiarmid's inequality.  
\end{proof}

Let $U=\{u_1<u_2<\dots\}$ be a subset of the positive integers. We define
\[P_U b(\tj)=b(u_\tj)\text{ and }\]
\[P_U \gamma (0)=\gamma(0) \text{ and } P_U \gamma(\tj)=\gamma(u_\tj)\quad (\text{for }\tj>0).\]

We will use the notation $P_U^+ \gamma$ for the restriction of $P_U \gamma$ to the positive integers.

Let $\ell>1$ be a large constant to be determined later.

Let $\mathcal{U}_{m}$ be all the subsets of $\{1,\dots,\ell  n\}$ of size at least $n$ which are of the form 
\[\{1,2,\dots,\ell n\}\backslash \cup_{i=0}^m (a_i,z_i]\]
for some disjoint subintervals $(a_i,z_i]$ of $\{1,\dots, \ell n\}$. Since we may assume that the sequences $a_0,a_1,\dots a_m$ and $z_0,z_1,\dots z_m$ are both increasing, $|\mathcal{U}_{m}|$ is at most the number of $2(m+1)$-tuples $(a_0,z_0,\dots,a_m,z_m)$ such that $0\le a_0<a_1<\cdots<a_m\le \ell n$ and $0\le z_0<z_1<\cdots<z_m\le \ell n$. Thus, $|\mathcal{U}_{m}|\le {{\ell n+1}\choose{m+1}}^2$.

\begin{lemma}\label{lemmaunionconc}
There is a $c_\kappa>0$ (depending on $\kappa$ and the size of the support of $\varrho$, but not depending on $\ell$) with the following property.  For each $n$ consider the event that for all $0\le m\le n$ and  $U\in  \mathcal{U}_m$, we have
\[\Big|\newbar{A}(P_U B,{f}; |U|,n)-\mathbb{E}_B\newbar{A}( B,{f};|U|,n)\Big|< c_\kappa \left(\frac{m}{ n}\right)^{1/2-\kappa/4} \ell^{1/2+\kappa/4}n.\]
The probability of this event tends to $1$ as $n$ goes to infinity.
\end{lemma}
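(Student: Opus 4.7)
The plan is to combine the per-$U$ concentration provided by Lemma~\ref{lemmaMcDiarmid} with a union bound over $\mathcal{U}_m$ and then a sum over~$m$. First, fix $m$ and a \emph{deterministic} set $U\in\mathcal{U}_m$. Since $B=(B(i))_{i\in\mathbb{Z}}$ is a sequence of i.i.d.\ $\pm1$ signs and $U$ is non-random, $P_UB$ has exactly the same law as the first $|U|$ coordinates of $B$. Consequently $\newbar{A}(P_UB,f;|U|,n)$ and $\newbar{A}(B,f;|U|,n)$ share their distribution, and in particular their mean. Using $|U|\le \ell n$, Lemma~\ref{lemmaMcDiarmid} applied with $t=|U|/n\in[1,\ell]$ yields, for every $\varepsilon>0$,
\[\Prob{|\newbar{A}(P_UB,f;|U|,n)-\mathbb{E}_B\newbar{A}(B,f;|U|,n)|\ge \varepsilon n}\le 2\exp\left(-\frac{\varepsilon^2 n}{2c^2\ell}\right).\]

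Next, I would take the threshold $\varepsilon(m)=c_\kappa(m/n)^{1/2-\kappa/4}\ell^{1/2+\kappa/4}$, so that the right-hand exponent becomes $(c_\kappa^2/(2c^2))\,m^{1-\kappa/2}n^{\kappa/2}\ell^{\kappa/2}$. Combined with the cardinality bound $|\mathcal{U}_m|\le \binom{\ell n+1}{m+1}^2\le \bigl(e(\ell n+1)/(m+1)\bigr)^{2(m+1)}$ recorded just above the lemma, a union bound over $U\in\mathcal{U}_m$ and then over $m\in\{1,\dots,n\}$ shows that the probability of the bad event is at most
\[\sum_{m=1}^n 2\left(\frac{e(\ell n+1)}{m+1}\right)^{2(m+1)}\exp\left(-\frac{c_\kappa^2}{2c^2}\,m^{1-\kappa/2}n^{\kappa/2}\ell^{\kappa/2}\right).\]
I would then show this tends to $0$ by comparing, for each $m$, the deviation exponent with $\log|\mathcal{U}_m|\sim 2m\log(\ell n/m)$. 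Their ratio is of order $(c_\kappa^2/c^2)\,(\ell n/m)^{\kappa/2}/\log(\ell n/m)$. Since $\kappa>0$, the function $y\mapsto y^{\kappa/2}/\log y$ diverges to infinity, so as soon as $\ell$ is chosen sufficiently large (in terms of $\kappa$, $c$ and $c_\kappa$, but not of $n$) this ratio exceeds $2$ uniformly in $m\in[1,n]$. Each summand is then bounded by $\exp(-\log|\mathcal{U}_m|)$, and the total sum is dominated by its first term $n\exp(-C\,n^{\kappa/2}\ell^{\kappa/2})=o(1)$. The case $m=0$, where the stated threshold degenerates, is treated separately by a plain McDiarmid concentration applied to the $O(n^2)$ sets in $\mathcal{U}_0$ with a fixed threshold of order $n^{-\kappa/4}$, whose failure probability is $o(1)$ and which is in any case absorbed by the stated bound for $m=1$.

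The main obstacle is the mismatch between the combinatorial entropy $\log|\mathcal{U}_m|\sim m\log(\ell n/m)$ and the concentration scale, which degrades like $m^{1-\kappa/2}$ rather than $m$ (we lose a factor $m^{\kappa/2}$ per excised interval). Closing this gap is precisely what forces the extra factor $\ell^{\kappa/2}$ appearing in the threshold, and explains why $\ell$ must be taken large — while remaining a constant independent of $n$, so that $c_\kappa$ can indeed be chosen to depend only on $\kappa$ and the support size of $\varrho$.
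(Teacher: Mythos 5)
Your proposal follows the same route as the paper: for a deterministic $U\in\mathcal{U}_m$ you use that $P_UB$ has the law of an i.i.d.\ sign sequence (so the means agree), apply Lemma~\ref{lemmaMcDiarmid} with $t=|U|/n\le\ell$, bound $|\mathcal{U}_m|\le\binom{\ell n+1}{m+1}^2$, and run a union bound over $U$ and then over $m$; your deviation exponent $\frac{c_\kappa^2}{2c^2}\left(\frac{m}{n}\right)^{1-\kappa/2}\ell^{\kappa/2}n$ is exactly the paper's \eqref{estP}, and the conclusion that the sum is dominated by the $m=1$ term of size $n\exp(-Cn^{\kappa/2}\ell^{\kappa/2})$ matches the paper's final display.

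The one genuine discrepancy is how the entropy-versus-deviation comparison is closed. You leave $c_\kappa$ unspecified and force the ratio $\asymp\frac{c_\kappa^2}{c^2}\,(\ell n/m)^{\kappa/2}/\log(\ell n/m)$ above $2$ by taking $\ell$ large. But $y\mapsto y^{\kappa/2}/\log y$ has a positive minimum of order $\kappa$ at $y=e^{2/\kappa}$, so for $\ell\le e^{2/\kappa}$ and $m$ a suitable constant fraction of $n$ (those with $\ell n/m\approx e^{2/\kappa}$) the ratio is only $\asymp\kappa c_\kappa^2/c^2$ and your argument gives nothing; as written you prove the lemma only for $\ell\ge\ell_0(c_\kappa,\kappa,c)$, whereas the lemma is invoked in Lemmas~\ref{lemmabnchoice} and~\ref{lemmasupd} for \emph{any} $\ell>1$, with $c_\kappa$ independent of $\ell$. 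The paper closes the gap the other way around: it fixes $c_\kappa$ (depending only on $\kappa$ and $c$) so large that $-x\log x-(1-x)\log(1-x)\le\frac{c_\kappa^2}{8c^2}x^{1-\kappa/2}$ for all $x\in[0,1]$ — possible precisely because $\kappa>0$ makes $x\log(1/x)=o(x^{1-\kappa/2})$ — and then, substituting $x=m/(\ell n)$, the entropy of $\mathcal{U}_m$ is at most half the deviation exponent for every $\ell>1$, $m$ and $n$ simultaneously, with no largeness assumption on $\ell$. Your proof is repaired identically: take $c_\kappa^2\gtrsim c^2/\kappa$ so that the minimum of your ratio over all $y>1$ already exceeds $2$. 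Since $\ell$ is ultimately taken large and your quantifier order (fix $c_\kappa$, then $\ell$) is legitimate, your weaker version would in fact suffice for Theorem~\ref{shape function linear}, but it is weaker than the lemma as stated and used. (Your aside on $m=0$ concerns a degeneracy of the statement itself — the threshold vanishes there, so the displayed strict inequality cannot hold literally — which the paper's own proof also passes over; it is harmless since only $m\ge2$ is needed downstream.)
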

\begin{proof}
Using the Stirling approximation, we have a $c_0$ such that  for all $1\le m\le n+1$ 
\[|\mathcal{U}_{m}|\le {{\ell n+1}\choose{m+1}}^2=\left(\frac{\ell n+1}{m+1}\right)^2{{\ell n}\choose{m}}^2\le c_0(\ell n+1)^2\exp\left(-2\ell n\left(\frac{m}{\ell n}\log \left(\frac{m}{\ell n} \right)+\left(1-\frac{m}{\ell n}\right)\log\left(1-\frac{m}{\ell n}\right)\right)\right).
\]
We can find a $c_\kappa$ such that 
\[-x\log (x)-(1-x)\log(1-x)\le \frac{c_\kappa^2}{8c^2} x^{1-\kappa/2}\]
for all $x\in [0,1]$. Thus,
\begin{equation}\label{estUm}
    |\mathcal{U}_{m}|\le c_0(\ell n+1)^2\exp\left(\frac{c_\kappa^2}{4c^2}\left(\frac{m}{n}\right)^{1-\kappa/2} \ell^{\kappa/2} n\right).
\end{equation}

Set $\varepsilon= c_\kappa\left(\frac{m}{n}\right)^{1/2-\kappa/4}\ell^{1/2+\kappa/4}$.  For $U\in \mathcal{U}_m$, let $t=\frac{|U|}{n}$.  Applying Lemma~\ref{lemmaMcDiarmid} and using that $t\le \ell$, we have

\begin{align}\label{estP}\Probbig{|\newbar{A} (P_U B, {f},|U|,n)-\mathbb{E}\newbar{A}(P_U B,{f};|U|,n)|\ge \varepsilon n}&\le 2\exp\left(-\frac{\varepsilon^2n}{2c^2t}\right)\\
\notag
&\le 2\exp\left(-\frac{\varepsilon^2n}{2c^2\ell}\right)
\\
\notag
&=2\exp\left(-\frac{c_\kappa^2}{2c^2} \left(\frac{m}{n}\right)^{1-\kappa/2} \ell^{\kappa/2}n\right).
\end{align}

Combining \eqref{estUm}, \eqref{estP} with the union bound, we get that the probability that
\[\Big|\newbar{A} (P_U B, {f},|U|,n)-\mathbb{E}\newbar{A}(P_U B,{f};|U|,n)\Big|\ge c_\kappa \left(\frac{m}{n}\right)^{1/2-\kappa/4} \ell^{1/2+\kappa/4}n\]
for some $U\in  \mathcal{U}_m$ is at most
\[ 2c_0(\ell n+1)^2\exp\left(-\frac{c_\kappa^2}{4c^2}\left(\frac{m}{n}\right)^{1-\kappa/2} \ell^{\kappa/2} n\right).\]

Thus, the probability of the complement of event in the statement of the lemma is at most
\[2c_0(n+1)(\ell n+1)^2\exp\left(-\frac{c_\kappa^2}{4c^2} \ell^{\kappa/2} n^{\kappa/2}\right),\]
which tends to $0$.
\end{proof}

Let ${f}:\mathbb{Z}\to [-c,c]$ be the function provided by Lemma~\ref{lemmafchoice}.
\begin{lemma}\label{lemmabnchoice}
Consider any $\ell>1$. For each $n$, we can choose $b_n:\{1,2,\dots,\ell n\}\to \{-1,+1\}$ such that 
\begin{enumerate}
\item \[\lim_{n\to\infty} \frac{\newbar{A}(b_n,{f};\ell n,n)}{n} =-\ell\shape\left(\frac{1}\ell\right).\]

\item For all large enough $n$, for all $0\le m\le n$ and  $U\in  \mathcal{U}_m$, we have
\[\Big|\newbar{A}(P_U b_n,{f}; |U|,n)-\mathbb{E}_B\newbar{A}(B,{f};|U|,n)\Big|< c_\kappa \left(\frac{m}{ n}\right)^{1/2-\kappa/4} \ell^{1/2+\kappa/4}n.\]

\item For all large enough $n$, we have
\[|\{t\in \{1,2,\dots,\ell n-1\}\,:\, b_n(t)=-1,\,b_n(t+1)=+1\}|\ge \frac{\ell n}{5}.\]
\end{enumerate}
\end{lemma}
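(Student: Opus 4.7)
The plan is a standard probabilistic existence argument: for $B$ drawn from $\mathbb{P}_B$, I will show that the three listed conditions hold simultaneously with probability tending to $1$ as $n\to\infty$. Consequently, for every sufficiently large $n$ the intersection of the three events is nonempty, and selecting a deterministic $b_n$ from it (and declaring $b_n$ arbitrarily for the finitely many exceptional small $n$) will give the required sequence. To obtain the \emph{limit} in (1) rather than just closeness at a fixed tolerance, I will fix in advance any sequence $\varepsilon_n\downarrow 0$ with $\varepsilon_n^2 n \to \infty$ and ask $b_n$ to lie in the $\varepsilon_n$-level version of the event below.

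For condition (1), Lemma~\ref{lemmafchoice}(1) applied with the constant sequence $t_n\equiv\ell$ gives $n^{-1}\mathbb{E}_B\newbar{A}(B,f;\ell n,n)\to -\ell\shape(1/\ell)$. The McDiarmid-type bound in Lemma~\ref{lemmaMcDiarmid} (with $t=\ell$) shows that $\mathbb{P}_B\bigl(|n^{-1}\newbar{A}(B,f;\ell n,n)-n^{-1}\mathbb{E}_B\newbar{A}(B,f;\ell n,n)|\geq \varepsilon_n\bigr)\leq 2\exp(-\varepsilon_n^2 n/(2c^2\ell))\to 0$. Combined with the convergence of the mean, this makes the event $\{|n^{-1}\newbar{A}(B,f;\ell n,n)+\ell\shape(1/\ell)|<2\varepsilon_n\}$ carry $\mathbb{P}_B$-probability tending to $1$; any $b_n$ drawn from it automatically satisfies the limit in (1).

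Condition (2) is the content of Lemma~\ref{lemmaunionconc} verbatim.

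For condition (3), set $X_t=\mathbbm{1}\{B(t)=-1,\,B(t+1)=+1\}$ and $N=\sum_{t=1}^{\ell n-1} X_t$, so $\mathbb{E}_B N=(\ell n-1)/4$. Since $X_tX_{t+1}\equiv 0$ and $X_t,X_{t'}$ are independent for $|t-t'|\geq 2$, one has $\Var_B N=O(\ell n)$, and Chebyshev's inequality yields $\mathbb{P}_B(N<\ell n/5)\to 0$. A union bound now combines the three events and completes the argument. The proof is essentially mechanical given the earlier concentration results; the only subtlety worth flagging is the passage in (1) from convergence of the $B$-expectation (Lemma~\ref{lemmafchoice}) to a pointwise limit along the selected realizations $b_n$, which is precisely why the McDiarmid bound of Lemma~\ref{lemmaMcDiarmid} is needed even though (1) itself mentions no probability.
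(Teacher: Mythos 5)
Your proposal is correct and follows essentially the same route as the paper, which likewise obtains the lemma by combining the choice of $f$ from Lemma~\ref{lemmafchoice} with the McDiarmid bound of Lemma~\ref{lemmaMcDiarmid}, the union-bound concentration of Lemma~\ref{lemmaunionconc}, and a law-of-large-numbers count of the sign pattern $(-1,+1)$, then selecting a realization $b_n$ from the intersection of the three high-probability events. The only cosmetic point is that the radius in your event for (1) should be $\varepsilon_n$ plus the (deterministic, vanishing) deviation of the mean rather than $2\varepsilon_n$, since $\varepsilon_n$ is fixed in advance of that rate; this does not affect the argument.
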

\begin{proof}
By the choice of $f$, we have \[\lim_{n\to\infty} \mathbb{E}_B\frac{\newbar{A}(B,{f};\ell n,n)}{n} =-\ell\shape\left(\frac{1}\ell\right).\]
Combining this with Lemma~\ref{lemmaMcDiarmid}, Lemma~\ref{lemmaunionconc} and the law of large numbers, we can easily obtain the statement. 
\end{proof}

Throughout the rest of the proof of Theorem~\ref{shape function linear}, we will assume that for every $\ell>1$, the sequence of vectors $b_n,$ $n\in\N,$ is fixed and satisfies the properties described in 
Lemma~\ref{lemmabnchoice}.  

\subsection{Loop decomposition}\label{secloops}
In this section, we construct the loop decomposition of  $\newbar{\gamma}=\newbar{\gamma}_n=\argmin_{\gamma\in \Gamma(\ell n,n)} A(b_n,f;\gamma)$. 
The whole decomposition and its various elements depend on $\ell$  and $n$, but we suppress this dependence in the notations.

Let $v_0=0$, $v_1=n$, and let $v_2,v_3,\dots,v_n$ be the set $\{1,2,\dots,n-1\}$ ordered so that $d^{*}(v_i)$ is increasing in $i$.

Let $U_{-1}=\{1,\dots,\ell n\}$.

For $i=0,1,2,\dots,n$, let us define
\begin{align*}
a_i&=\min \{t\in U_{i-1}\cup\{0\}\,:\,\newbar{\gamma}(t)=v_i\},\\
z_i&=\max \{t\in U_{i-1}\cup\{0\}\,:\,\newbar{\gamma}(t)=v_i\},\\
L_i&=U_{i-1}\cap (a_i,z_i],\\
U_i&=U_{i-1}\backslash L_{i},\\
e_i&=|\{\{i,i+1\}\subset L_i\,:\,b_n(i)=-1,b_n(i+1)=+1\}|,\\
s_i&=\sum_{j\in L_i} b_n(j){f}(\gamma(j)).
\end{align*}

The proof of the next lemma is straightforward.

\begin{lemma}\hfill\label{lemmacomb}
\begin{enumerate}[(1)]
\item The sets $L_0,L_1,\dots,L_n$ and $U_n$ form a partition of $\{1,\dots,\ell n\}$.
\item \label{item2}$P_{U_i}\newbar{\gamma}$ is a lazy walk from $0$ to $n$, which visits the sites $v_0,v_1,v_2,\dots,v_i$ exactly once. In particular, $|U_i|\ge n$.
\item $L_i=(a_i,z_i]$, so $L_i$ is an interval.
\item \label{UminUmp1}$U_m\in \mathcal{U}_{m}$.
\item For $i\ge 1$, $P_{U_i}\newbar{\gamma}$ only visits the sites $0,1,\dots,n$.
\item For $i\ge 2$, $P_{L_i}^+\newbar{\gamma}$ only visits sites from the set $\{v_{i},v_{i+1},\dots,v_n\}$. 
\item For $i\ge 2$, if $P_{L_i}^+\newbar{\gamma}$ visits a site $x$, then $d^*(x)\ge d^*(v_i)$.
\item $s_i\ge -c|L_i|$.
\item For $i\ge 2$, we have $s_i\ge -c|L_i|+e_i d^*(v_i)$.
\item \label{item8}$\sum_{i=0}^n e_i\ge |\{i\in \{0,1,\dots,\ell n-1\}\,:\,b_n(i)=-1,b_n(i+1)=+1\}|-2(n+1)$.
\item \label{item9}  For all $0\le m\le n$, we have
\begin{align*} 
\newbar{A}\big(P_{U_m} b_n,{f};|U_m|,n\big)&= \newbar{A}(b_n,{f};\ell n,n)-\sum_{j=0}^m s_j\\&\le\newbar{A}(b_n,{f};\ell n,n)+ (\ell n-|U_m|)c-\sum_{j=2}^m e_j d^*(v_j).
\end{align*}
\end{enumerate}
\end{lemma}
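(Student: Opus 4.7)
My plan is to prove all eleven claims together by induction on the index $i$, repeatedly exploiting the fact that deleting a time interval between two visits to the same site of a lazy walk yields a shorter lazy walk with the same endpoints. For the structural claims (1)--(5), the base cases $L_0=(0,z_0]$ and $L_1=(a_1,\ell n]$ are clean intervals in $U_{-1}$ and $U_0$. For the inductive step at $i\ge 2$, the points $a_i,z_i$ lie in $U_{i-1}$ by definition, so no previously removed interval contains either endpoint; a short case analysis shows that $U_i$ is obtained from $\{1,\dots,\ell n\}$ by deleting at most $i+1$ pairwise disjoint intervals (padding with empty intervals when an excursion around $v_i$ swallows earlier removed intervals), which gives (3)--(4). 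Claim (2) holds because removing the excursion preserves laziness and reduces the visits to $v_i$ to the single one at $a_i$; claim (5) holds because $\newbar\gamma$ stays in $[0,n]$. These, together with the resulting partition $\{1,\dots,\ell n\}=U_n\cup L_0\cup\cdots\cup L_n$, imply (1).

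Properties (6) and (7) follow from the processing order: once $v_j$ has been turned into a single visit, no later excursion $P_{L_i}^+\newbar\gamma$ (with $i>j$) can visit $v_j$, so $P_{L_i}^+\newbar\gamma$ only touches sites in $\{v_i,\dots,v_n\}$, all of which have $d^*$-values at least $d^*(v_i)$. Claims (8)--(9) reduce to a pointwise estimate: using the identity $\min_{|y-x|\le 1}f(y)=f(x)+d^*(x)-2c$, whenever $k,k+1\in L_i$ with $b_n(k)=-1$ and $b_n(k+1)=+1$ we have
\[-f(\newbar\gamma(k))+f(\newbar\gamma(k+1))\ge d^*(\newbar\gamma(k))-2c\ge d^*(v_i)-2c,\]
where the last inequality uses (7). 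Thus each good pair inside $L_i$ contributes at least $d^*(v_i)$ above the $-2c$ baseline, while every other time-step contributes at least $-c$, yielding (8) and (9).

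For (10), a good pair that fails to land inside any single $L_j$ must straddle a boundary between the pieces of the partition from (1); the $n+1$ intervals $L_j$, together with the complementary gaps in $U_n$, produce at most $2(n+1)$ such boundary pairs. Finally, for (11), the partition identity gives
\[A(P_{U_m}b_n,f;P_{U_m}\newbar\gamma)=\sum_{j\in U_m}b_n(j)f(\newbar\gamma(j))=\newbar{A}(b_n,f;\ell n,n)-\sum_{i=0}^m s_i,\]
and since $P_{U_m}\newbar\gamma\in\Gamma(|U_m|,n)$ by (2), this upper-bounds $\newbar{A}(P_{U_m}b_n,f;|U_m|,n)$; substituting the bounds from (8)--(9) together with $\sum_{i=0}^m|L_i|=\ell n-|U_m|$ produces the second inequality.

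The main subtlety throughout is the inductive bookkeeping for (3)--(4) when an excursion around $v_i$ happens to contain previously removed intervals so that several disjoint intervals effectively merge into one; once that is handled (by allowing empty intervals in the representation of $U_m$ as an element of $\mathcal{U}_m$), the remaining claims are essentially arithmetic combinations of definitions and the partition identity.
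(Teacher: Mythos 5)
Your overall plan (induction, the fact that deleting an excursion between two visits to the same site preserves laziness, the pointwise estimate for (8)--(9) via $\min_{|y-x|\le 1}f(y)=f(x)+d^*(x)-2c$, and the partition identity for (11)) is the right kind of argument, but the central structural point is missing, and in one place your sketch contradicts what you are supposed to prove. Items (3), (6), (7) and the ``exactly once'' part of (2) all hinge on the following fact, which you assert but never justify: for $j<i$, the unique surviving visit time $a_j$ to the already-processed site $v_j$ can never lie in $(a_i,z_i]$, equivalently a later excursion interval can never contain a previously removed interval. Your parenthetical about ``padding with empty intervals when an excursion around $v_i$ swallows earlier removed intervals'' concedes that swallowing might happen; but if some $L_j\subset (a_i,z_i]$ with $j<i$, then $L_i=U_{i-1}\cap(a_i,z_i]$ has a hole and is \emph{not} the interval $(a_i,z_i]$, so (3) fails (and with it (6), (7), (9) as you derive them) --- padding only rescues (4). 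The missing argument is short but essential: $a_j\in U_{i-1}\cup\{0\}$, and if $a_j\in L_i$ it is deleted at stage $i$; then $P_{U_i}\newbar{\gamma}$ would be a lazy walk from $0$ to $n$ never visiting the site $v_j\in\{0,\dots,n\}$, which is impossible since consecutive positions differ by at most $1$. This simultaneously shows that swallowing never occurs, that each $L_i$ is a full interval, and that later excursions avoid $v_0,\dots,v_{i-1}$. Relatedly, your justification of (5), ``because $\newbar{\gamma}$ stays in $[0,n]$,'' rests on a false premise: the minimizer of $A(b_n,f;\cdot)$ over $\Gamma(\ell n,n)$ may perfectly well leave $[0,n]$. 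What is true is that $U_1=(z_0,a_1]$, that after $z_0$ (the last visit to $0$) the walk can never return to $0$ and hence stays positive, and that before $a_1$ (the first visit to $n$ after $z_0$) it cannot exceed $n$; that is what gives (5).

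Two further points. For (10), your claim that a lost good pair ``must straddle a boundary between the pieces of the partition'' is false: a pair with both elements in $U_n$ lies inside one piece and is counted by no $e_i$, and your boundary count does not cover it; a priori the lost pairs could then number about $3n$. The correct count uses (2) and (5) at $i=n$: $P_{U_n}\newbar{\gamma}$ visits each of the $n+1$ sites exactly once, so $|U_n|=n$; since the sign pattern $b_n(k)=-1$, $b_n(k+1)=+1$ cannot occur at overlapping positions, each time index belongs to at most one good pair, so at most $n$ good pairs meet $U_n$, while a good pair straddling two distinct intervals $L_j,L_{j'}$ must have its left element equal to $z_j$, giving at most $n+1$ more; in total at most $2n+1\le 2(n+1)$ pairs are lost. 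Finally, for (11) you only prove the inequality $\newbar{A}(P_{U_m}b_n,f;|U_m|,n)\le \newbar{A}(b_n,f;\ell n,n)-\sum_{j=0}^m s_j$ (the projected path need not be optimal in the projected environment), whereas the lemma states an equality on the first line; the ``$\le$'' is all that is used later in the paper, but you should flag the discrepancy rather than silently prove the weaker relation.
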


\subsection{Finishing the proof of Theorem~\ref{shape function linear}}

\begin{lemma}\label{lemmasupd}
For any $\ell>1$, we define $v_0,\ldots v_n$ as in Section~\ref{secloops}. If a sequence $(m_n)$ satisfies $2\le m_n\le n$ and $\limsup_{n\to\infty} \frac{m_n}{n}=\delta$, then
\begin{equation}
\label{eq:sum_ej_d}
\limsup_{n\to\infty} \frac{\sum_{j=2}^{m_n} e_j d^*(v_j)}n\le c_\kappa \delta^{1/2-\kappa/4} \ell^{1/2+\kappa/4}.
\end{equation}
Here the constant $c_\kappa$ does not depend on $\ell$.
\end{lemma}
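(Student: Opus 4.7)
The plan is to derive the bound by combining the action-comparison inequality in Lemma~\ref{lemmacomb}(\ref{item9}) with the concentration estimate of Lemma~\ref{lemmabnchoice}(2), the limit computation of Lemma~\ref{lemmafchoice}(1), and finally the concavity of $\shape$ together with $\shape(0)=c$.

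First, I would rewrite Lemma~\ref{lemmacomb}(\ref{item9}) as
\[
\sum_{j=2}^{m_n} e_j d^*(v_j)\ \le\ \newbar{A}(b_n,f;\ell n,n)\ -\ \newbar{A}(P_{U_{m_n}}b_n,f;|U_{m_n}|,n)\ +\ (\ell n-|U_{m_n}|)c.
\]
Since $U_{m_n}\in\mathcal{U}_{m_n}$ by Lemma~\ref{lemmacomb}(\ref{UminUmp1}), Lemma~\ref{lemmabnchoice}(2) lets me replace $\newbar{A}(P_{U_{m_n}}b_n,f;|U_{m_n}|,n)$ by $\mathbb{E}_B\newbar{A}(B,f;|U_{m_n}|,n)$ up to an error of $c_\kappa(m_n/n)^{1/2-\kappa/4}\ell^{1/2+\kappa/4}n$. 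Lemma~\ref{lemmabnchoice}(1) then takes care of the first term, giving the asymptotic $-\ell\shape(1/\ell)$ after division by $n$.

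Next, to handle $\mathbb{E}_B\newbar{A}(B,f;|U_{m_n}|,n)$, I would pass to a subsequence along which $m_{n}/n\to\delta$ and, using compactness (since $n\le|U_{m_n}|\le\ell n$), a further sub-subsequence along which $|U_{m_n}|/n\to t$ for some $t\in[1,\ell]$. Lemma~\ref{lemmafchoice}(1) then yields
\[
\frac{\mathbb{E}_B\newbar{A}(B,f;|U_{m_n}|,n)}{n}\ \longrightarrow\ -t\,\shape\!\left(\tfrac{1}{t}\right),
\]
while $(\ell n-|U_{m_n}|)c/n\to(\ell-t)c$. Putting everything together along the chosen subsequence gives
\[
\limsup_{n\to\infty}\frac{\sum_{j=2}^{m_n}e_j d^*(v_j)}{n}\ \le\ -\ell\,\shape\!\left(\tfrac{1}{\ell}\right)+t\,\shape\!\left(\tfrac{1}{t}\right)+(\ell-t)c+c_\kappa\,\delta^{1/2-\kappa/4}\ell^{1/2+\kappa/4}.
\]

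The final step, and the one to be careful about, is to show that the first three terms are nonpositive. Dividing by $\ell$, this amounts to
\[
\frac{t}{\ell}\,\shape\!\left(\tfrac{1}{t}\right)+\frac{\ell-t}{\ell}\,\shape(0)\ \le\ \shape\!\left(\tfrac{1}{\ell}\right),
\]
which is exactly the concavity inequality for $\shape$ applied to the convex combination $\frac{t}{\ell}\cdot\frac{1}{t}+\frac{\ell-t}{\ell}\cdot 0=\frac{1}{\ell}$, using $\shape(0)=c$ from \eqref{eq:shape-at-0}. This leaves precisely the desired bound $c_\kappa\,\delta^{1/2-\kappa/4}\ell^{1/2+\kappa/4}$. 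Since the bound on the $\limsup$ is the same along every subsequential limit $t$, the original $\limsup$ satisfies the claimed inequality, and I am done. The main obstacle is purely bookkeeping: one must ensure that the subsequence extraction is legitimate (the $\limsup$ definition supplies this) and that the constant $c_\kappa$ coming from Lemma~\ref{lemmabnchoice}(2) does not depend on $\ell$, which is exactly how it was built in Lemma~\ref{lemmaunionconc}.
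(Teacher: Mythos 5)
Your proposal is correct and follows essentially the same route as the paper: it combines Lemma~\ref{lemmacomb}(\ref{item9}) and (\ref{UminUmp1}) with the concentration and limit statements of Lemma~\ref{lemmabnchoice} and Lemma~\ref{lemmafchoice}, extracts a subsequential limit $t=\lim |U_{m_n}|/n\in[1,\ell]$, and closes with the concavity of $\shape$ at the convex combination giving $\ell\shape(1/\ell)-(\ell-t)c\ge t\shape(1/t)$, exactly as in the paper. The only cosmetic difference is the order in which you extract subsequences, which, as you note, is harmless since the error term is monotone in the limiting value of $m_n/n$.
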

\begin{proof}

Combining the choice of $b_n$  provided by Lemma~\ref{lemmabnchoice} with \eqref{item9} and \eqref{UminUmp1} of Lemma~\ref{lemmacomb}, we obtain that there is $c_\kappa>0$ with the following property.  For all large enough $n$, we have
\[\mathbb{E}_B\big(\newbar{A}(B,{f};|U_{m_n}|,n)\big)-c_\kappa \left(\frac{m_n}{ n}\right)^{1/2-\kappa/4} \ell^{1/2+\kappa/4}n\le\newbar{A}(b_n,{f};\ell n,n) +(\ell n-|U_{m_n}|)c-\sum_{j=2}^{m_n} e_j d^*(v_j),\]
see Section~\ref{secloops} for the definition of sets $U_m$, $0\le m\le n$.

We may assume that  $\lim_{n\to\infty}\frac{|U_{m_n}|}{n}=t$ exist.   If not, we may take a convergent subsequence.   We have $t\ge 1$ due to (\ref{item2}) of Lemma~\ref{lemmacomb}. Dividing by $-n$ and taking $n\to\infty$, we obtain 
\begin{equation}t\shape\left(\frac{1}t\right)+c_\kappa \delta^{1/2-\kappa/4} \ell^{1/2+\kappa/4}\ge \ell\shape\left(\frac{1}\ell\right)-
(\ell -t)c+\limsup_{n\to\infty} \frac{\sum_{j=2}^{m_n} e_j d^*(v_j)}n .\label{eqlimit}\end{equation}
Since $\shape$ is concave, we have
\[
\shape\left(\frac{1}{\ell}\right)\ge 
\left(1-\frac{t}{\ell}\right)\shape(0) + \frac{t}{\ell}\shape\left(\frac{1}{t}\right).
\]
Using that $\shape(0)=c$, we can rewrite this as
\[ \ell\shape\left(\frac{1}\ell\right)-
(\ell -t)c\ge t\shape\left(\frac{1}t\right).\]
Inserting this into \eqref{eqlimit}, we obtain~\eqref{eq:sum_ej_d}.
\end{proof}

Let  \[m_{n,k}=\max \left\{2 \le i\le n\,:\, d^*(v_i)\le \frac{2c}k\right\},\] if the set on the right-hand side is nonempty, and let $m_{n,k}=1$ otherwise.  Note that $m_{n,1}=n$. By the choice of $f$ provided by Lemma~\ref{lemmafchoice}, we have 
\[\limsup_{n\to\infty} \frac{m_{n,k}}{n}\le c_0 k^{-2(1+\kappa)}.\]

Combining this with Lemma~\ref{lemmasupd}, we obtain 
\begin{align}
    \limsup_{n\to\infty} \frac{\sum_{j=2}^{m_{n,k}} e_j d^*(v_j)}n&\le c_\kappa \left( c_0 k^{-2(1+\kappa)}\right)^{1/2-\kappa/4} \ell^{1/2+\kappa/4}\nonumber\\&\le c_\kappa'  \ell^{1/2+\kappa/4} k^{-(1+\kappa/3)},\label{eqsumdvj}
\end{align}
where we used our assumption that $\kappa<\frac{1}3$, see \eqref{p(x)2}.

Then for each $k>1$, we have
\[\sum_{j=m_{n,k}+1}^{m_{n,k-1}} e_j d^*(v_j)\ge \frac{2c}{k} \sum_{j=m_{n,k}+1}^{m_{n,k-1}} e_j.\]

For $1\le g<h$, adding these inequalities for $k=g+1,g+2,\dots, h$, we get that

\[\sum_{j=2}^{m_{n,g}} e_j d^*(v_j) \ge \sum_{k=g+1}^{h}\sum_{j=m_{n,k}+1}^{m_{n,k-1}} e_j d^*(v_j)\ge \sum_{k=g+1}^h  \frac{2c}{k} \sum_{j=m_{n,k}+1}^{m_{n,k-1}} e_j.\]

Combining this with \eqref{eqsumdvj}, we obtain
\[\limsup_{n\to\infty} \frac{1}n \sum_{k=g+1}^h  \frac{2c}{k} \sum_{j=m_{n,k}+1}^{m_{n,k-1}} e_j \le c_\kappa'  \ell^{1/2+\kappa/4} g^{-(1+\kappa/3)}.\]

Summing up these inequalities for $g=1,2,\dots,h-1$, we get that
\[\limsup_{n\to\infty} \frac{1}n \sum_{g=1}^{h-1} \sum_{k=g+1}^h  \frac{2c}{k} \sum_{j=m_{n,k}+1}^{m_{n,k-1}} e_j \le \sum_{g=1}^{h-1} c_\kappa'  \ell^{1/2+\kappa/4} g^{-(1+\kappa/3)}.\]

Note that 
\[\sum_{g=1}^{h-1} \sum_{k=g+1}^h  \frac{2c}{k} \sum_{j=m_{n,k}+1}^{m_{n,k-1}} e_j=\sum_{k=2}^h\frac{2c(k-1)}{k} \sum_{j=m_{n,k}+1}^{m_{n,k-1}} e_j \ge c \sum_{j=m_{n,h}+1}^{m_{n,1}} e_j=c \sum_{j=m_{n,h}+1}^{n} e_j.\]

Moreover,  there is $c_\kappa''>0$ such that for all $h$,
\[\sum_{g=1}^{h-1} c_\kappa'  \ell^{1/2+\kappa/4} g^{-(1+\kappa/3)}\le \sum_{g=1}^{\infty} c_\kappa'  \ell^{1/2+\kappa/4} g^{-(1+\kappa/3)} \le c c_\kappa''  \ell^{1/2+\kappa/4}. \]

Thus,
\begin{equation}\label{eqsumei}
    \limsup_{n\to\infty} \frac{1}n\sum_{j=m_{n,h}+1}^{n} e_j\le c_\kappa''  \ell^{1/2+\kappa/4} .
\end{equation}

Note that the right hand side does not depend on $h$. Also the constant $c_\kappa''$ does not depend on $\ell$.

\begin{lemma}
Assume that $\ell$ is large enough. Then we can choose a sequence $h_n$ such that $h_n\to \infty$ and
\[\sum_{j=0}^{m_{n,h_n}} e_j\ge \frac{\ell n}{6}\]
for all large enough $n$, and 
\[\lim_{n\to\infty}\frac{m_{n,h_n}}{n}=0.\]

\end{lemma}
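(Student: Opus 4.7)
The plan is to combine an unconditional lower bound on the total $\sum_{j=0}^n e_j$ with the tail estimate~\eqref{eqsumei}, and then to extract $h_n$ by a diagonal argument. Property~3 of Lemma~\ref{lemmabnchoice} gives at least $\ell n/5$ positions $t$ at which $b_n(t)=-1,\,b_n(t+1)=+1$, and item~(\ref{item8}) of Lemma~\ref{lemmacomb} converts this into
\[
\sum_{j=0}^n e_j \ge \frac{\ell n}{5} - 2(n+1)
\]
for all large $n$. Subtracting the tail estimate
\[
\limsup_{n\to\infty}\frac{1}{n}\sum_{j=m_{n,h}+1}^n e_j \le c_\kappa'' \ell^{1/2+\kappa/4}
\]
from~\eqref{eqsumei} would yield, for every fixed $h$ and all sufficiently large $n$,
\[
\frac{1}{n}\sum_{j=0}^{m_{n,h}} e_j \ge \frac{\ell}{5} - 3 - c_\kappa''\, \ell^{1/2+\kappa/4}.
\]

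The key observation is that $1/2+\kappa/4<1$, using the assumption $\kappa<1/3$ from~\eqref{p(x)2}, so the right-hand side dominates $\ell/6$ once $\ell$ is chosen large enough. This fixes the required lower bound on $\ell$ and is the only place in the argument where largeness of $\ell$ is really needed. With such an $\ell$ fixed, for each integer $h\ge 1$, item~2 of Lemma~\ref{lemmafchoice} simultaneously guarantees $m_{n,h}/n \le 2c_0 h^{-2(1+\kappa)}$ for all sufficiently large $n$. Hence for every $h$ there exists an index $N_h$ beyond which both
\[
\sum_{j=0}^{m_{n,h}} e_j \ge \frac{\ell n}{6} \qquad\text{and}\qquad \frac{m_{n,h}}{n} \le 2c_0 h^{-2(1+\kappa)}
\]
hold.

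Finally, I would arrange $N_1<N_2<\cdots$ (enlarging the $N_h$ if necessary) and set $h_n=\max\{h:N_h\le n\}$, with the convention $h_n=1$ for $n<N_1$. Then $h_n\to\infty$, and since $n\ge N_{h_n}$ by construction, both of the required properties $\sum_{j=0}^{m_{n,h_n}} e_j \ge \ell n/6$ and $m_{n,h_n}/n \le 2c_0 h_n^{-2(1+\kappa)}\to 0$ follow. I do not expect any genuine obstacle here: the analytical content has already been carried out in the preceding subsections, and this lemma is essentially a bookkeeping step converting the tail estimate~\eqref{eqsumei} into a statement along a well-chosen diagonal sequence.
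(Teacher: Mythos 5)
Your argument is correct and follows essentially the same route as the paper: combine part \eqref{item8} of Lemma~\ref{lemmacomb} with property 3 of Lemma~\ref{lemmabnchoice} to bound the total $\sum_{j=0}^n e_j$ from below, subtract the tail estimate \eqref{eqsumei}, use $1/2+\kappa/4<1$ to dominate $\ell n/6$ once $\ell$ is large, and extract $h_n$ by a diagonal choice over $h$ (the paper compresses this diagonalization into a single sentence). The only cosmetic point is that the $\limsup$ in \eqref{eqsumei} yields the tail bound only up to an arbitrarily small additive slack for large $n$ (the paper works with $2c_\kappa''\ell^{1/2+\kappa/4}n$), which changes nothing since the comparison with $\ell n/6$ has ample room.
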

\begin{proof}
Combining the choice of ${f}$ provided by Lemma~\ref{lemmafchoice} and \eqref{eqsumei}, we can choose $h_n$ such that $h_n\to\infty$, $\lim_{n\to\infty}\frac{m_{n,h_n}}{n}=0$ and for all large enough $n$, we have
\[ \sum_{j=m_{n,h}+1}^{n} e_j\le 2nc_\kappa''  \ell^{1/2+\kappa/4}.\]
Then
\begin{align*}\sum_{j=0}^{m_{n,h_n}} e_j&= \sum_{i=0}^n e_i- \sum_{j=m_{n,h}+1}^{n} e_j\\&\ge |\{i\in \{1,\dots,\ell n-1\}\,:\,b_n(i)=-1,b_n(i+1)=+1\}|-2(n+1)-\sum_{j=m_{n,h}+1}^{n} e_j\\&\ge \frac{\ell n}5 -3n-2nc_\kappa''  \ell^{1/2+\kappa/4}\\&\ge \frac{\ell n}6
\end{align*}
for all large enough $n$ provided that $\ell$ is large enough. Here, the first inequality is just part \eqref{item8} of Lemma~\ref{lemmacomb}, and the second inequality follows from the choice of $b_n$ provided by Lemma~\ref{lemmabnchoice}.
\end{proof}

Now we have
\begin{align}
    \mathbb{E}_B\newbar{A}\big(B,{f};|U_{m_{n,h_n}}|,n\big)&\le \newbar{A}\big(P_{U_{m_{n,h_n}}} b_n,{f};|U_{m_{n,h_n}}|,n\big) +c_\kappa \left(\frac{m_{n,h_n}}{ n}\right)^{1/2-\kappa/4} \ell^{1/2+\kappa/4}n\label{eqpen}\\&\le \newbar{A}(b_n,{f};\ell n,n) +c(\ell n-|U_{m_{n,h_n}}|)+c_\kappa \left(\frac{m_{n,h_n}}{ n}\right)^{1/2-\kappa/4} \ell^{1/2+\kappa/4}n\nonumber,
\end{align}
where the first inequality follows from the choice of $b_n$ provided by Lemma~\ref{lemmabnchoice} and \eqref{UminUmp1} of Lemma~\ref{lemmacomb}, and the second inequality follows from \eqref{item9} of Lemma~\ref{lemmacomb}.

Note that 
\[|\{1,2,\dots,\ell n\}\backslash U_{m_{n,h_n}}|=\sum_{i=0}^{m_{n,h_n}} |L_i|\ge \sum_{j=0}^{m_{n,h_n}} e_j\ge \frac{\ell n}{6} \]
for all large enough $n$. Thus, \[\limsup_{n\to\infty} \frac{|U_{m_{n,h_n}}|}n\le \frac{5}6\ell.\] We may assume that 
\[
t=\lim_{n\to\infty} \frac{|U_{m_{n,h_n}}|}n\in [1,\ell)
\] exists. If not, we can consider a convergent subsequence. Thus, dividing \eqref{eqpen} by $-n$ and taking $n\to\infty$, we obtain
\[t\shape\left(\frac{1}t\right)\ge \ell\shape\left(\frac{1}\ell\right)-\shape(0)(\ell-t),\]
or, rearranging,
\[
\shape\left(\frac{1}{\ell}\right)\le \frac{t}{\ell}\shape\left(\frac{1}{t}\right)+
\left(1-\frac{t}{\ell}\right)\shape(0).
\]
Combining this with the concavity of $\shape$, we obtain that $\shape$ is linear on $\left[0,\frac{1}{\ell}\right]\subset \left[0,\frac{1}t \right]$. This completes the proof of Theorem~\ref{shape function linear}. \epf

\section{Poisson approximation -- The proof of Theorem~\ref{thm:convtoPPP}} 
\label{secpoisson}
\subsection{Basics of Poisson Approximation}
\label{sub:basics_of_poisson_approximation}

To prove Theorem \ref{thm:convtoPPP}, we need some terminology and theory on random point processes that we proceed to describe. This
description is based on~\cite{kallenberg1983random}. We restrict our attention to point processes in $\R^2$ since that is the
class of point processes we deal with in this section. The space~$\Ng$ consists of all integer-valued (nonnegative locally
bounded) measures defined on the set $\Bc$ of bounded Borel sets in $\R^2$. This set is equipped with 
$\sigma$-algebra~$\Nc$ generated by maps $\mu\mapsto\mu(B)$,
$B\in\Bc$. A point process $\mu$ is a measurable map from a probability space $(\Omega,\Fc,\Pp)$ to $(\Ng,\Nc)$. Its
distribution is the pushforward of $\Pp$ under $\mu$. A point process $\mu$ is called a.s.-simple if with
probability 1,
all the atoms of $\mu$ have weight~$1$, which corresponds to the situation where no two points of the point process
coincide. 

The natural topology on $\Ng$ is vague topology. Its base is given by finite intersections of $\Ng$-sets of the form
\[
 \bigg\{\mu: s< \int f\ud\mu < t\bigg\},
\]
where $s,t\in\R$, and $f$ is any continuous function with compact support.

For any random point process $\mu$, we denote \[\Bc_\mu=\{B\in \Bc:\ \mu(\partial B)=0\ \text{a.s.}\},\]
where $\partial B$ denotes the boundary of $B$.

A collection $\Uc$ of bounded Borel sets in $\R^2$ is called a DC(dissecting and covering)-ring if it is a ring such
that for any $B\in\Bc$
and any $\eps>0$, there is a finite cover of $B$ by $\Uc$-sets of diameter less than $\eps$. A DC-semiring is a
semiring with the same property. Recall that $\Uc$ is a ring if it is nonempty and for all $A,B\in \Uc$, we have $A\cap B\in \Uc$ and $A\setminus B\in \Uc$. Moreover, $\Uc$~is a semiring if it is nonempty and for all $A,B\in \Uc$, we have $A\cap B\in \Uc$ and $A\setminus B$ is the disjoint union of finitely many sets from $\Uc$.  An example of a DC-semiring is the collection of all rectangles
$[a_1,a_2)\times[b_1,b_2)$. The collection of all finite unions of rectangles is a DC-ring.

To check the weak convergence to an a.s.-simple point process in vague topology it is essentially
sufficient to check the convergence of the avoidance function that computes the probability that there is no points
inside a given set.
The following theorem is a specific case of Theorem~4.7 in~\cite{kallenberg1983random}.
\begin{theorem} 
\label{th:Kallenberg}
Let $(\mu_n)_{n\in\N}$ and $\mu$ be point processes in $\R^2$ and assume that $\mu$ is a.s.-simple.
Suppose that $\Uc\subset \Bc_\mu$ is a DC-ring and $\Ic\subset \Bc_\mu$ is a DC-semiring. Suppose further that
\begin{equation}
 \lim_{n\to\infty}\Prob{\mu_n(U)=0}=\Prob{\mu(U)=0},\quad U\in \Uc,
\label{eq:conv_avoidance}
\end{equation}
 and
\begin{equation}
 \limsup_{n\to\infty} \E \mu_n(I)\le \E \mu(I)<\infty,\quad I\in \Ic. 
\label{eq:limsup_expect}
\end{equation}

Then $\mu_n$ converges to $\mu$ weakly in vague topology.
\end{theorem}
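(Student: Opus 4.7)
The plan is to follow the classical two-step scheme for weak convergence: first prove tightness of $(\mu_n)$ in the vague topology on $\Ng$, and then show that every weak subsequential limit has the same distribution as $\mu$. Together these imply $\mu_n \Rightarrow \mu$.

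For tightness, by Prokhorov's theorem in the vague topology it suffices to show that for every bounded Borel $B \subset \R^2$ and every $\eps>0$ there is $N$ with $\sup_n \Prob{\mu_n(B) > N} < \eps$. Since $\Ic$ is a DC-semiring, $B$ admits a finite cover by sets $I_1, \ldots, I_m \in \Ic$ (apply the covering property of $\Ic$ at any fixed diameter). By hypothesis \eqref{eq:limsup_expect}, $\sup_n \E\mu_n(I_j) < \infty$ for each $j$, hence $\sup_n \E\mu_n(B) < \infty$, and Markov's inequality completes the argument.

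To identify the limit, let $\nu$ be any weak subsequential limit, say along a subsequence $(n_k)$. Because $\mu$ is a.s.-simple, its distribution is uniquely determined by the avoidance function $U\mapsto\Prob{\mu(U)=0}$ restricted to the DC-ring $\Uc$: avoidance probabilities on a rich enough class determine all joint hit-or-miss probabilities via inclusion-exclusion, and for a.s.-simple processes these already pin down the full finite-dimensional distributions. It therefore suffices to establish
\begin{equation}
\Prob{\nu(U)=0}=\Prob{\mu(U)=0},\qquad U\in\Uc.
\label{eq:want_avoid}
\end{equation}

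The main obstacle is that although each $U\in\Uc$ lies in $\Bc_\mu$, it need not lie in $\Bc_\nu$, so the continuous mapping theorem cannot be applied directly to $\mu_{n_k}\Rightarrow\nu$ together with \eqref{eq:conv_avoidance}. The remedy is to sandwich $U^-\subset U\subset U^+$ by sets $U^\pm\in\Uc$ that are simultaneously $\nu$-continuity sets. By Fatou applied to \eqref{eq:limsup_expect}, the intensity measure $\E\nu$ is locally finite, so along any monotone one-parameter family of candidate approximants, only countably many parameters can correspond to boundaries of positive $\E\nu$-measure; the DC-ring structure of $\Uc$ (closure under finite unions, intersections, and differences, combined with the covering property at arbitrary small diameter) gives enough flexibility to realize such one-parameter families entirely within $\Uc$. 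Applying \eqref{eq:conv_avoidance} to $U^\pm$, invoking the continuous mapping theorem via their $\nu$-continuity to get $\Prob{\mu_{n_k}(U^\pm)=0}\to\Prob{\nu(U^\pm)=0}$, and finally squeezing $U^\pm\to U$ (with the discrepancy controlled through $\E\mu(U^+\setminus U^-)$, also locally finite) yields \eqref{eq:want_avoid}. This approximation inside $\Uc$ is the only genuinely technical step; the rest is bookkeeping.
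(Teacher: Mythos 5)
The paper does not actually prove this statement: it is quoted as a special case of Theorem~4.7 of Kallenberg's \emph{Random Measures, Theory and Applications}, so the only benchmark is Kallenberg's argument, whose overall architecture (tightness plus identification of subsequential limits through the avoidance function) you have correctly reproduced. Your tightness step is fine: the covering property of the DC-semiring together with \eqref{eq:limsup_expect} bounds $\sup_n\E\mu_n(B)$ for every bounded Borel $B$ (up to finitely many exceptional $n$, which are harmless since each $\mu_n(B)$ is a.s.\ finite), and Markov's inequality plus the vague-topology form of Prokhorov's theorem give relative compactness.

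The gap is in the identification step. R\'enyi's uniqueness theorem says that the avoidance function on a DC-ring determines the law of a \emph{simple} point process; applied to a subsequential limit $\nu$ whose avoidance function on $\Uc$ matches that of $\mu$, it only yields that the \emph{support} $\nu^*$ of $\nu$ (the simple process obtained by resetting all multiplicities to $1$) has the law of $\mu$. It does not exclude that $\nu$ carries atoms of weight $\geq 2$, and your sentence ``it therefore suffices to establish'' the matching of avoidance probabilities is exactly where this is swept under the rug. The hypothesis \eqref{eq:limsup_expect} is not there only for tightness: its second, essential role is to kill multiplicities in the limit. One must argue that
$\E\nu(I)\le\limsup_n\E\mu_n(I)\le\E\mu(I)=\E\nu^*(I)\le\E\nu(I)$
for $I\in\Ic$ (the first inequality by a Fatou/portmanteau argument along the convergent subsequence), whence $\E(\nu-\nu^*)(I)=0$ on a covering semiring and $\nu=\nu^*$ a.s. Without this step the theorem is false as you have proved it: if $\mu$ is a Poisson process and each $\mu_n$ is the process obtained by giving every point of an independent copy of $\mu$ multiplicity $2$, then \eqref{eq:conv_avoidance} holds for every $U$ and $(\mu_n)$ is tight, yet $\mu_n$ does not converge to $\mu$; only \eqref{eq:limsup_expect} fails, which shows a proof using that hypothesis solely for tightness cannot close. (Your sandwiching device for the continuity-set issue is also only sketched --- an abstract DC-ring need not contain monotone one-parameter families, and $\E\nu$ may a priori charge the boundary of every set in $\Uc$ --- but that is a repairable technicality; the multiplicity issue is the substantive omission.)
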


\subsection{Proof of Theorem \ref{thm:convtoPPP}}

Let us take $\Ic$ to be the semiring of rectangles, $\Uc$ to be the ring of finite unions thereof, and check 
conditions~\eqref{eq:conv_avoidance} and~\eqref{eq:limsup_expect} of Theorem~\ref{th:Kallenberg}.

The identity
\begin{equation}\label{zetaidentity}(2\kappa+2)(\zeta-1)=-\zeta,\end{equation}
follows trivially from \eqref{eq:def_zeta}.

Take a rectangle $I=[a_1,a_2)\times[b_1,b_2)$. Using \eqref{eqdpdf} and \eqref{zetaidentity}, we see that
\begin{align}
\label{eq:expect_I}
 \E \mu_n(I)&=\sum_{n^{\zeta}a_1\le k< n^{\zeta}a_2}\Prob{d(k)\in[b_1n^{\zeta-1},b_2n^{\zeta-1})}\\
 &=\sum_{n^{\zeta}a_1\le k< n^{\zeta}a_2} (1+o(1))\int_{b_1n^{\zeta-1}}^{b_2n^{\zeta-1}} p_\kappa q^2 u^{2\kappa+1}du\nonumber\\&=\sum_{n^{\zeta}a_1\le k< n^{\zeta}a_2} (1+o(1)) \left( n^{\zeta-1}\right)^{2\kappa+2}  \int_{b_1}^{b_2} p_\kappa q^2 y^{2\kappa+1} dy\nonumber\\
 &=(1+o(1)) n^{-\zeta}\sum_{n^{\zeta}a_1\le k< n^{\zeta}a_2}   \int_{b_1}^{b_2} p_\kappa q^2 y^{2\kappa+1} dy\nonumber\\&=(1+o(1))\int_{a_1}^{a_2} \int_{b_1}^{b_2} p_\kappa q^2 y^{2\kappa+1} dy  dt=(1+o(1))\mathbb{E}\mu(I).\nonumber
 \end{align}

Thus, \eqref{eq:limsup_expect} holds true. 

To prove~\eqref{eq:conv_avoidance}, we take arbitrary disjoint rectangles \[U_i=[a_1^{(i)},a_2^{(i)})\times[b_1^{(i)},b_2^{(i)}),\quad i=1,\ldots,m,\]
define
\[
 U=\bigcup_{i=1}^m U_i,
\]
and compute
\[
 \lim_{n\to\infty}\Prob{\mu_n(U)=0}.
\]
We quote Theorem~4.3 from \cite{chen1975poisson}.
\begin{theorem}\label{th:poisson_for_m-dependent:chen} If $X_1,\ldots,X_n$ are $m$-dependent r.v.'s taking values $0$ and $1$, then
for any function $h:\Z_+\to[-1,1]$,
\[
 \left|\E h\left(\sum_{i=1}^n X_i\right)-\sum_{k=0}^{\infty}e^{-\lambda}\frac{\lambda^k}{k!}h(k)\right|\le
6\min\{\lambda^{-1/2},1\}\left[\sum_{i,j: i\ne j}\cov (X_i,X_j)+(4m+1)\sum_{i=1}^n p_i^2\right], 
\]
where
$p_i=\Prob{X_i=1}$, and $\lambda=\sum_{i=1}^n p_i$.
\end{theorem}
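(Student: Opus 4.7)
The plan is to prove this Poisson-approximation bound via the Stein--Chen method, which reduces the estimate for $W=\sum_{i=1}^n X_i$ to the analysis of a single auxiliary function $g$ built from the Poisson generator. Two structural features of the hypothesis drive the proof: the $\{0,1\}$-valuedness of the $X_i$ (which produces a useful cancellation identity) and the $m$-dependence (which allows each $X_i$ to be detached from all but a bounded neighborhood).

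First I would set up the Stein machinery: for each $h\colon\Z_+\to[-1,1]$, solve the Stein equation
\[\lambda g(k+1) - k g(k) = h(k) - \sum_{j=0}^{\infty} e^{-\lambda}\frac{\lambda^j}{j!} h(j), \qquad k\ge 0,\]
recursively from $g(0)=0$. The crucial analytical input is a sharp estimate of the first differences,
\[\|\Delta g\|_\infty := \sup_k |g(k+1)-g(k)| \le C\min\!\bigl(\lambda^{-1/2},\,1\bigr),\]
for a universal constant $C$. I would quote the Barbour--Eagleson bound or, failing that, derive it from the generator representation $g(k) = -\int_0^\infty (T_t h(k)-P_\lambda h)\,\ud t$, where $T_t$ is the Poisson semigroup, combined with a coupling of two Poisson birth--death chains started at $k$ and $k+1$; this coupling is what upgrades the elementary $O(\lambda^{-1})$ bound to the sharper $O(\lambda^{-1/2})$ bound.

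Next, exploiting $X_i\in\{0,1\}$ through the identity $X_i g(W) = X_i g(W-X_i+1)$, I rewrite
\[\E\bigl[\lambda g(W+1)-Wg(W)\bigr] \;=\; \sum_{i=1}^n \Bigl(p_i\,\E g(W+1) - \E[X_i g(W_i+1)]\Bigr),\qquad W_i:=W-X_i.\]
For each $i$ I define the dependency neighborhood $N_i=\{j\ne i : |i-j|\le m\}$ and the ``far'' sum $Z_i=\sum_{j\notin N_i\cup\{i\}} X_j$. By $m$-dependence, $X_i$ is independent of $Z_i$, so $p_i\,\E g(Z_i+1) = \E[X_i g(Z_i+1)]$; inserting this equality splits the $i$-th summand above into two pieces that are controlled by applying the Lipschitz bound $|g(a+1)-g(b+1)|\le\|\Delta g\|_\infty|a-b|$ to the pairs $(W,Z_i)$ and $(W_i,Z_i)$. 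One obtains
\[\bigl|p_i\E g(W+1) - \E[X_i g(W_i+1)]\bigr| \;\le\; \|\Delta g\|_\infty\Bigl(p_i\!\!\sum_{j\in N_i\cup\{i\}}\!\!p_j \;+\; \sum_{j\in N_i}\E[X_iX_j]\Bigr).\]

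Summing over $i$ and decomposing $\E[X_iX_j]=\cov(X_i,X_j)+p_ip_j$ yields exactly the two families of terms in the statement: a pairwise covariance sum $\sum_{i\ne j}\cov(X_i,X_j)$ (automatically restricted to $|i-j|\le m$ by $m$-dependence, since the remaining covariances vanish) together with a diagonal piece bounded by $(4m+1)\sum_i p_i^2$, where the constant comes from $|N_i|\le 2m$ plus the symmetric counting $2p_ip_j\le p_i^2+p_j^2$ over pairs with $|i-j|\le m$. Multiplying by the Stein bound on $\|\Delta g\|_\infty$ and optimizing the constant to $6$ gives the stated inequality uniformly over $h\in[-1,1]^{\Z_+}$. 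The main obstacle is the sharp $\min(\lambda^{-1/2},1)$ scaling in the Stein factor: the elementary recursion only gives $\|\Delta g\|_\infty=O(\lambda^{-1})$, and the improvement to $\lambda^{-1/2}$ --- which is essential so that the bound is useful for moderate $\lambda$ --- requires the coupling/generator argument indicated above. The combinatorial bookkeeping producing the exact constants $6$ and $4m+1$ is elementary but must be executed carefully.
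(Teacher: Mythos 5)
This statement is not proved in the paper at all: it is imported verbatim as Theorem~4.3 of Chen (1975), so there is no internal argument to compare yours against. Your sketch reconstructs the standard Stein--Chen local-dependence proof, which is in substance the method of the cited source, and the core computation checks out: the identity $X_i g(W)=X_i g(W_i+1)$ for $\{0,1\}$-valued summands, the detachment of $X_i$ from the far sum $Z_i$ via $m$-dependence, the Lipschitz estimate in terms of $\|\Delta g\|_\infty$, and the bookkeeping $\sum_i p_i^2+2\sum_i\sum_{j\in N_i}p_ip_j\le(4m+1)\sum_i p_i^2$ using $|N_i|\le 2m$, which produces exactly the stated diagonal term; the covariance term appears because $\E[X_iX_j]=\cov(X_i,X_j)+p_ip_j$ and the covariances at distance greater than $m$ vanish.

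One remark in your closing paragraph is backwards, though it does not create a gap. For $\lambda\ge1$ one has $\lambda^{-1}\le\lambda^{-1/2}$, so $\min\{\lambda^{-1/2},1\}$ is the \emph{weaker} (larger) Stein factor: Chen's $\lambda^{-1/2}$ is not an improvement requiring a coupling refinement, but a concession relative to the sharp bound $\|\Delta g_{\mathbbm{1}_A}\|_\infty\le\lambda^{-1}(1-e^{-\lambda})\le\min\{\lambda^{-1},1\}$ for indicator test functions. The only genuine care needed for general $h:\Z_+\to[-1,1]$, as opposed to indicators, is to reduce to indicators through the total variation identity $\sup_{\|h\|_\infty\le1}\bigl|\E h(W)-\sum_k e^{-\lambda}\tfrac{\lambda^k}{k!}h(k)\bigr|=2\sup_A\bigl|\Prob{W\in A}-e^{-\lambda}\sum_{k\in A}\tfrac{\lambda^k}{k!}\bigr|$, at the cost of a factor $2$ that is absorbed into the constant $6$. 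With that adjustment your argument is complete and yields the quoted inequality (indeed a slightly stronger one).
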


We shall apply this theorem to our situation.
Notice that
\[
 \mu_n(U)=\sum_{k} Z_{n,k}
\]
is a finite sum of $1$-dependent r.v.'s  
\[
Z_{n,k}=\mathbbm{1}_{\{(n^{-\zeta}k,\, n^{\zeta-1}d(k))\in U\}},\quad k\in\Z, 
\]
since only $k\in\bigcup_i [n^{\zeta}a_1^{(i)},n^{\zeta}a_2^{(i)})$ contribute to this sum. 
Therefore, we can take $m=1$ in Theorem~\ref{th:poisson_for_m-dependent:chen}. Next,
\[
 \Prob{Z_{n,k}=1}=\Prob{d(k)\in n^{\zeta-1}U(n^{-\zeta}k)},
\]
where $U(x)=\{y:\ (x,y)\in U\}$. The same calculation as in \eqref{eq:expect_I}, gives us 
\[p_{n,k}=\Prob{Z_{n,k}=1}=(1+o(1))n^{-\zeta}\int_{U(n^{-\zeta}k)}   p_\kappa q^2 y^{2\kappa+1}dy,\]
so that
\[
 \lambda_n=\sum_{k\in\Z}\Prob{Z_{n,k}=1}=(1+o(1))\sum_{k}n^{-\zeta}\int_{U(n^{-\zeta}k)} p_\kappa q^2 y^{2\kappa+1} d y\to \lambda,\quad	 n\to\infty,
\]
where
\[
 \lambda=\int_U p_\kappa q^2 y^{2\kappa+1} d t dy.
\]
Note that since $U$ is bounded, we have $p_{n,k}^2=O(n^{-2\zeta})$.

Thus,
\[
\lim_{n\to\infty}\sum_{k\in\Z}p_{n,k}^2=O(n^{-\zeta})=o(1),
\]
since the sum above has $O(n^{\zeta})$ non-zero terms.

Most covariances in the estimate provided by Theorem~\ref{th:poisson_for_m-dependent:chen} are equal to zero in our case. 
The only nontrivial contribution comes from
\begin{align*}
 \cov (Z_{n,k},Z_{n,k+1})&\le \Prob{Z_{n,k}=1,\ Z_{n,k+1}=1}\\
&\le  \Prob{d(k)\in n^{\zeta-1}U(n^{-\zeta}k),\ d(k+1)\in n^{\zeta-1}U(n^{-\zeta}(k+1))}\\
&\le \Prob{d(k)\le n^{\zeta-1}b_*,\ d(k+1)\le n^{\zeta-1}b_*},
\end{align*}
where $b_*=\max_i b_2^{(i)}$.
The right-hand side is bounded by
\begin{align*}
 \Prob{|F(k)|,|F(k+1)|,|F(k+2)|\ge c-n^{\zeta-1}b_*}\le \left(O\left(\left(n^{\zeta-1}\right)^{\kappa+1}\right)\right)^3=O(n^{-\frac{3\kappa+3}{2\kappa+3}}).
\end{align*}

Since there are $O(n^{\zeta})$ indices $k$ contributing nonzero covariances we conclude that the total contribution from
the covariance term is $O(n^{-\frac{\kappa+1}{2\kappa+3}})$. This concludes the proof that the estimate provided by
Theorem~\ref{th:poisson_for_m-dependent:chen} converges to 0 as $n\to\infty$.  We now take $h(k)=\mathbbm{1}_{k=0}$,
so that $\E h(\sum Z_{n,k})$ is exactly the avoidance function for measure $\mu_n$, which completes the proof
of Theorem~\ref{thm:convtoPPP} once we recall the definition \eqref{eq:p_kappa}.\epf

\section{The minimizing path -- The proof of Theorem~\ref{thm:free}} 
\label{sec:properties_of_the_maximizing_path}


We have the following simple lemma.

\begin{lemma}
  \label{lm:bound_path_by_min_discr}
  For any path $\gamma$ in $\paths((t_1,x_1);(t_2,x_2))$,
  \[
    \action(B,F;\gamma) \ge- c(t_2-t_1) + \sum_{\substack{i \in (t_1,t_2) \\ i \textrm{ even}}} \1_{\left\{ B(i) \neq B(i+1) \right\}} \cdot \min_{x \in \range(\gamma)} \discr{x}.
  \]
\end{lemma}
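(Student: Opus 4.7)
The plan is to partition the indices $\{t_1+1,\dots,t_2\}$ appearing in the action sum into consecutive pairs $(i,i+1)$ with $i$ even in $(t_1,t_2)$, together with at most two unpaired boundary indices whose presence depends on the parities of $t_1$ and $t_2$. The key observation is that the laziness constraint $|\gamma(i+1)-\gamma(i)|\le 1$ interacts with the definition of the discrepancy precisely on pairs where $B(i)\ne B(i+1)$, and this is the only place where the correction term $\min_{x\in\range(\gamma)} d(x)$ enters.

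For each pair $(i,i+1)$ with $i$ even, I would analyze the sum $B(i)F(\gamma(i))+B(i+1)F(\gamma(i+1))$ in two cases. If $B(i)=B(i+1)$, the crude estimate $|F(\cdot)|\le c$ gives this sum $\ge -2c$. If $B(i)\ne B(i+1)$, then when $\gamma(i)=\gamma(i+1)$ the two terms cancel exactly, and when $\gamma(i)$ and $\gamma(i+1)$ differ by $\pm 1$ so that $\{\gamma(i),\gamma(i+1)\}=\{y,y+1\}$ for some $y\in\range(\gamma)$, the sum equals $\pm(F(y+1)-F(y))$ and is bounded below by $-|F(y+1)-F(y)|=-(2c-d(y))$, directly from Definition~\ref{def:discrepancy}. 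In either subcase the pair contributes at least $-2c+\min_{x\in\range(\gamma)} d(x)$; in the cancellation subcase this is vacuous since $d(y)\le 2c$.

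For each unpaired boundary index $j$, I would use the trivial bound $B(j)F(\gamma(j))\ge -c$. Summing, the $-2c$ contribution from each pair plus the $-c$ contribution from each unpaired index telescope via $2\cdot\#\text{pairs}+\#\text{unpaired}=t_2-t_1$ to exactly $-c(t_2-t_1)$, while the extra $\min_{x\in\range(\gamma)} d(x)$ contributes only for the pairs with $B(i)\ne B(i+1)$, reproducing the claimed lower bound.

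There is no substantive obstacle; the argument is a careful per-pair case split combined with the bookkeeping of at most two boundary terms. The only thing worth double-checking is that the parity bookkeeping makes the $-2c$ and $-c$ contributions collapse cleanly to $-c(t_2-t_1)$, and that both $y$ and $y+1$ always lie in $\range(\gamma)$ in the relevant subcase so that $d(y)\ge \min_{x\in\range(\gamma)} d(x)$ is legitimate.
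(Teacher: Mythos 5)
Your proof is correct and follows essentially the same route as the paper: bound each pair $(i,i+1)$ with $i$ even below by $-2c+\1_{\{B(i)\neq B(i+1)\}}\min_{x\in\range(\gamma)}d(x)$, using the laziness constraint and Definition~\ref{def:discrepancy} exactly as you describe, and sum. Your explicit handling of the at most two unpaired boundary indices (each bounded by $-c$) is a careful touch that the paper leaves implicit, but it is the same argument.
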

\begin{proof}
Clearly, $F(\gamma(i))B(i)+F(\gamma(i+1))B(i+1)\ge -2c$. If $B(i)\neq B(i+1)$, then this inequality can be improved to  $F(\gamma(i))B(i)+F(\gamma(i+1))B(i+1)\ge  \min_{x\in \range(\gamma)}\discr{x}-2c$. Thus,
\[F(\gamma(i))B(i)+F(\gamma(i+1))B(i+1)\ge-2c+\1_{\left\{ B(i) \neq B(i+1) \right\}} \cdot \min_{x \in \range(\gamma)} \discr{x}.\]
Summing these inequalities for even $i$, the lemma follows.
\end{proof}

We now prove some basic properties of the optimal path $\maxPath_n$.

The following corollary of Theorem~\ref{thm:convtoPPP} guarantees the existence of good ``candidate sites'' for the best path, while also guaranteeing that such points are well separated with high probability.

\begin{corollary}
\label{cor:separation_of_discrepancies}
Assume that we are given $\varepsilon> 0$ and  $a> 0$. 
\begin{enumerate}[(i)]
\item
Then there is $b>0$ such that for all sufficiently large $n$, we have 
\[\Prob{\mu_n\left( R \right) \geq 1 } > 1 - \varepsilon,\]
where $R = (-a, a) \times (0,b)$.
\item
Let $b>0$. There is $\delta>0$  such that for all sufficiently large $n$, we have
\begin{align*}
	\ProbBig{ \mu_n\left(R_{\delta}(y)\right) \leq 1 \text{\rm\ for all }  y\  \text{\rm such that\ }  R_{\delta}(y)\subset R } > 1 - \varepsilon,
\end{align*}
where $R = (-a, a) \times (0,b)$ and $R_{\delta}(y) = (-a, a )\times (y - \delta, y + \delta)$.

\end{enumerate}
\end{corollary}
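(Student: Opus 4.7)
Both parts rely on Theorem~\ref{thm:convtoPPP} and on the fact that rectangles are continuity sets for the limit process $\mu$, since $\mu$ almost surely puts no mass on the boundary of any fixed rectangle.

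For part (i), $R=(-a,a)\times(0,b)\in \Bc_\mu$, so the convergence $\mu_n\to\mu$ in vague topology yields
\[\Prob{\mu_n(R)=0}\;\longrightarrow\;\Prob{\mu(R)=0}\;=\;\exp(-\lambda(R)),\qquad \lambda(R)=\frac{2a\,p_\kappa q^2}{2\kappa+2}\, b^{2\kappa+2}.\]
Since $\kappa>-1$, $\lambda(R)$ tends to $+\infty$ as $b\to\infty$, so the plan is to choose $b$ large enough that $\exp(-\lambda(R))<\varepsilon/2$; the desired lower bound then holds for all sufficiently large $n$.

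For part (ii) the event involves a continuum of positions $y$, so I would replace it by a finite union via an overlapping cover. Set $K=\lceil b/\delta\rceil$ and, for $k=0,\dots,K$, let
\[R_k=(-a,a)\times\bigl(\bigl((k-1)\delta,(k+2)\delta\bigr)\cap(0,b)\bigr),\]
each of vertical extent at most $3\delta$. If two points of $\mu_n$ lie in some $R_\delta(y)\subset R$, their $y$-coordinates $y_1<y_2$ satisfy $y_2-y_1<2\delta$, and then setting $k=\lceil y_1/\delta\rceil$ places both in $R_k$. Hence the complement of the event in the statement is contained in $\bigcup_{k=0}^K\{\mu_n(R_k)\geq 2\}$.

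To choose $\delta$, let $\lambda_k=\E\mu(R_k)=2a p_\kappa q^2\int_{J_k}y^{2\kappa+1}\,dy$, where $J_k$ is the $y$-projection of $R_k$. One checks that $\max_k\lambda_k\to 0$ as $\delta\to 0$: for $\kappa\geq -1/2$ the supremum of $y^{2\kappa+1}$ on $J_k$ is bounded by $b^{2\kappa+1}$ and $\lambda_k=O(\delta)$, while for $\kappa<-1/2$ the worst strip is the bottom one and $\lambda_k=O(\delta^{2\kappa+2})$, which vanishes thanks to $\kappa>-1$. Meanwhile $\sum_k\lambda_k\leq 3\lambda(R)$ stays bounded, and the elementary inequality $\Prob{\mathrm{Poi}(\lambda)\geq 2}\leq\lambda^2$ gives
\[\sum_{k=0}^{K}\Prob{\mu(R_k)\geq 2}\;\leq\;\sum_{k=0}^{K}\lambda_k^2\;\leq\;(\max_k\lambda_k)\sum_k\lambda_k\;\xrightarrow[\delta\to 0]{}\;0.\]
I would fix $\delta>0$ so that this sum is below $\varepsilon/2$; with $\delta$ (and hence $K$) fixed, Theorem~\ref{thm:convtoPPP} applied to each of the finitely many continuity sets $R_k$ gives $\Prob{\mu_n(R_k)\geq 2}\to\Prob{\mu(R_k)\geq 2}$, and a union bound yields the desired estimate for all large $n$. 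The only delicate point I anticipate is the singular behavior of the intensity $y^{2\kappa+1}$ near $0$ when $\kappa<-1/2$, but this is absorbed by the integrability condition $\kappa>-1$.
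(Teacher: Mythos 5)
Your proof is correct and follows essentially the same route as the paper: both parts reduce, via Theorem~\ref{thm:convtoPPP} and convergence of counts on finitely many rectangular continuity sets, to estimates for the limiting Poisson process, and part (ii) uses the same overlapping-strip covering of $R$ to replace the continuum of positions $y$. The only cosmetic difference is that you bound $\Prob{\mu(R_k)\ge 2}$ directly by $\lambda_k^2$ from the Poisson marginal (with the same case split around $\kappa=-1/2$), whereas the paper conditions on the number of points in $R$ and uses the conditional i.i.d.\ structure; these are equivalent second-moment estimates.
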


\begin{proof}
We start by proving (i). First we prove this fact for the limiting Poisson point process~$\mu$ with driving measure $\nu$ described in Theorem \ref{thm:convtoPPP}.  
Given $a$, for all large enough $b$, we have $\Prob{\mu(R) \geq 1} >   1 - \varepsilon$, which is clear from the density of $\nu$. By Theorem 4.2 in \cite{kallenberg1983random}, vague convergence of $\mu_n$ to $\mu$ implies that $\mu_n(R)$ converge to $\mu(R)$ in law, so the statement follows.

To prove (ii), we also start by considering the limiting process $\mu$. First, we consider the case  $2\kappa+1<0$. We condition on $N_R$, the number of points in the rectangle $R$. Then the points are independent and identically distributed in $R$ with distribution $\nu(\cdot)/\nu(R)$. 
Thus for any two such points, the probability of both of them being within a given rectangle $R_{2 \delta}(y) \subset R$ is 
\begin{equation}\label{kappasmall}\left(\frac{2a}{\nu(R)}\right)^2 \left(\int_{y-2\delta}^{y+2\delta} p_\kappa q^2 u^{2\kappa+1} du\right)^2\le \left(\frac{2a}{\nu(R)}\right)^2 \left(\int_0^{4\delta}p_\kappa q^2 u^{2\kappa+1} du\right)\left(\int_{y-2\delta}^{y+2\delta} p_\kappa q^2 u^{2\kappa+1} du\right). \end{equation}
 
And thus the probability of any two points being within a given rectangle $R_{2 \delta}(y)$ is bounded by \[N_R(N_R-1)\left(\frac{2a}{\nu(R)}\right)^2 \left(\int_0^{4\delta}p_\kappa q^2 u^{2\kappa+1} du\right)\left(\int_{y-2\delta}^{y+2\delta} p_\kappa q^2 u^{2\kappa+1} du\right) .\]  

Now, consider the rectangles $R_{ 2\delta}^{k} := R_{ 2\delta}( 2k\delta)$, where $k\in \Z$ and $R_{2\delta}^k\subset R$. Assuming that $b/(2\delta)$ is an integer,  these cover $R$. If each $R_{ 2\delta}^{k}$ contains no more than $1$ point, then no two points are within any $R_{ \delta}(y)$ for any $R_\delta(y) \subset R$. 

Using a union bound, we obtain
\begin{align*}
  &\ProbBig{ \mu(R_{\delta}(y))  \geq 2  \textrm{ for some } R_\delta(y) \subset R  }  \\
  & \leq \Probbig { \mu(R_{2\delta}^{k}) \geq 2 \text{ for some } k } \\ 
  & \leq  \Exp[ N_R(N_R-1)] \left(\frac{2a}{\nu(R)}\right)^2 \left(\int_0^{4\delta}p_\kappa q^2 u^{2\kappa+1} du\right)\sum_k \int_{2k\delta-2\delta}^{2k\delta+2\delta} p_\kappa q^2 u^{2\kappa+1} du\\
  &\le 2\Exp[ N_R(N_R-1)] \left(\frac{2a}{\nu(R)}\right)^2 \left(\int_0^{4\delta}p_\kappa q^2 u^{2\kappa+1} du\right)\left(\int_0^b  p_\kappa q^2 u^{2\kappa+1} du\right),
\end{align*} 
where in the last line the factor $2$ is coming from the fact that each point of $R$ is covered by at most $2$ rectangles $R_{ 2\delta}^{k}$.

This probability can be made arbitrarily small by taking $\delta$ small enough.  

Finally, by Theorem 4.2 in \cite{kallenberg1983random}, vague convergence of $\mu_n$ to $\mu$ implies that the joint distribution of $\left\{\mu_n \left( R_{2\delta}^{k} \right) \right\}_{R_{2\delta}^k\in R}$ converges to the joint distribution of $\left\{\mu \left( R_{2\delta}^{k} \right) \right\}_{R_{2\delta}^k\in R}$.

The proof of the case $2\kappa+1\ge 0$ is similar, we just need to replace the right hand side of~\eqref{kappasmall} with
\[\left(\frac{2a}{\nu(R)}\right)^2 4\delta p_\kappa q^2 b^{2\kappa+1} \left(\int_{y-2\delta}^{y+2\delta} p_\kappa q^2 u^{2\kappa+1} du\right).\qedhere\]
\end{proof}

  In the rest of this section, we will consider events $\mathcal{A}=\mathcal{A}_{b,k_2,k_1,n}$ which depend on the positive real parameters $b,k_2,k_1$ and the positive integer $n$. 
 The idea of the following definition is to be able to say that if we choose parameters $b,k_2,k_1$
 sequentially, then we can guarantee a bound on the complement of  $\mathcal{A}_{b,k_2,k_1,n}$ for all large enough $n$ for a range of those parameter values. 

 We say that $\mathcal{A}$  has high probability if for every $\varepsilon>0$, we can choose a number $b^*\in(0,\infty)$, and functions 
 $k_2^*:(0,+\infty)\to(0,+\infty)$, $k_1^*:(0,+\infty)^2\to(0,+\infty)$  such that for all $b\in(b^*,+\infty)$, $k_2\in(k_2^*(b),+\infty)$ and 
 $k_1\in(0,k_1^*(b,k_2))$, we have $\Prob{\mathcal{A}}>1-\varepsilon$ for all large enough $n$.

We will denote these events by $\mathcal{A}_1,\mathcal{A}_2,\dots$ and
  define 
  \begin{equation*}
      \mathcal{D}_k=\bigcap_{i=1}^k \mathcal{A}_i.
  \end{equation*}  
  Note that the intersection of high probability events has also high probability. Thus, if $\mathcal{A}_1,\dots,\mathcal{A}_k$ have high probability, so does $\mathcal{D}_k$.

  From Corollary \ref{cor:separation_of_discrepancies}, we see that the event
  
  \[\mathcal{A}_1=\{\text{There is }\ov{x}\text{ such that }\abs{\ov{x}}<  n^{\zeta} \text{ and }\discr{\ov{x}} < b n^{\zeta-1}\}\]
has high probability.

  On $\mathcal{A}_1$, we consider the path that moves ballistically up to $\ov{x}$ and then remains optimally on the edge $\{\ov{x}, \ov{x}+1\}$.  By \eqref{eq:eta_lower_bound}, it has action at most
  \begin{equation}
  \label{eq:ballistic-action-at-most}
  c  n^\zeta+(n -  n^{\zeta}) ( b n^{\zeta-1}-c)\le -cn+(2c+b)n^\zeta.    
  \end{equation}

  Thus, the action of the minimal path $\maxPath_n$ does not exceed this quantity. In other words, $A(B,F,\newbar{\gamma}_n)+cn< k_2 n^\zeta$ for all $k_2>2c+b$.
  
  Using this fact and Lemma \ref{lm:bound_path_by_min_discr}, we can give an upper bound on the smallest discrepancy $\maxPath_n$  must reach, that is,
 \begin{align*}
  A(B,F,\newbar{\gamma}_n) +cn & \ge   \sum_{\substack{i \in (0,n) \\ i \textrm{ even}}} \1 \left\{ B(i) \neq B(i+1) \right\} \cdot \min_{x \in \range(\maxPath_n)} \discr{x}.
  \end{align*}

We can bound the sum of indicators above using the law of large numbers, and obtain that the event
  \begin{equation*}
    \mathcal{A}_2=\left\{A(B,F,\newbar{\gamma}_n)+cn\ge   
    \frac{1}5 n \cdot
    \min_{x \in \range(\maxPath_n)} \discr{x}\right\}
  \end{equation*}
  has high probability.
  
  On the high probability event $\mathcal{D}_2$, we have
  \[(2c+b)n^\zeta\ge A(B,F,\newbar{\gamma}_n)+cn\ge   \frac{1}5 n d_n.\]
  Rearranging, we get $d_n\le 5(2c+b) n^{\zeta-1}.$ Thus, the event
  \begin{equation*}
  \mathcal{A}_3=\{d_n\le k_2 n^{\zeta-1}\}
\end{equation*}
 has high probability.

  From Theorem \ref{thm:convtoPPP}, the event
  \[
  \mathcal{A}_4=\left\{ \mu_n \left( [-k_1 n^{\zeta}, k_1 n^{\zeta} ] \times [0, 5(2c+b)]  \right) = 0 \right\}
  \]
   has high probability.  
  We have seen that on $\mathcal{D}_2$, we have $d(\ell_n)=d_n\le 5(2c+b)$. Thus, if $\mathcal{A}_4$ also occurs, we  must have $|\ell_n|\ge k_1 n^\zeta$. Thus,
  \[\mathcal{A}_5=\{|\ell_n|\ge k_1 n^\zeta\}\]
  has high probability. 
  
  Finally, we give an upper bound for the range of $\maxPath_n$.  
  Any path with range not contained in $(-k_2 n^{\zeta}, k_2 n^{\zeta})$ moves through all sites in either $[1, k_2 n^{\zeta}]$ or $[ -k_2 n^{\zeta}, -1]$.  Thus, we can bound the action of any such path from below by 
  \begin{equation*}
    \min \left( \sum_{i = 1}^{k_2 n^{\zeta}} -\abs{\spacevar{i}}, \sum_{i = -1}^{-k_2 n^{\zeta}} -\abs{\spacevar{i}} \right) - (n - k_2 n^{\zeta}) c.
  \end{equation*}
  The law of large numbers gives that the event 
  \[
 \mathcal{A}_6=\left\{  \min \left( \sum_{i = 1}^{k_2 n^{\zeta}} -\abs{\spacevar{i}}, \sum_{i = -1}^{-k_2 n^{\zeta}} -\abs{\spacevar{i}} \right) - (n - k_2 n^{\zeta}) c\ge -cn + \left( \frac{c - \Exp \abs{ \spacevar{0}}}{2} \right) k_2 n^{\zeta}\right\}
  \]
  has high probability.
  
  Assuming that $k_2$ is large enough, we have 
  \[
    k_2 \frac{c - \Exp \abs{ \spacevar{0}}}{2} > 2c + b.
  \]
  Therefore, for those values of $k_2$, on $\mathcal{D}_6$, a path with range not contained in $(-k_2 n^{\zeta}, k_2 n^{\zeta})$ has greater action than $-cn+(2c + b)n^\zeta$, which is bounded below by the action of the ballistic path to $\ov{x}$, 
  see~\eqref{eq:ballistic-action-at-most},  and thus cannot have minimal action. In particular, \[\mathcal{A}_7=\{|\ell_n|\le k_2 n^{\zeta}\}\]
  has high probability. On $\mathcal{A}_7$, we have $d_n\ge\min_{x\in [-k_2 n^{\zeta}, k_2 n^{\zeta}]}d(x)$. Combining this with Theorem~\ref{thm:convtoPPP}, we obtain
  that 
  \[\mathcal{A}_8=\Big\{d_n\ge\min_{x\in [-k_2 n^{\zeta}, k_2 n^{\zeta}]}d(x)\ge k_1 n^{\zeta-1}\Big\}\]
has high probability. On $\mathcal{A}_2\cap\mathcal{A}_8$, we have 
\[A(B,F,\gamma)+cn\ge \frac{1}5 n d_n\ge \frac{1}5 k_1 n^{\zeta}.\]
Thus, the event
\[\mathcal{A}_9=\{A(B,F,\gamma)+cn\ge   k_1 n^{\zeta}\}\]
has high probability.

So far we proved \eqref{eq:free1}, \eqref{eq:free2}, \eqref{eq:free3}. To complete the proof of  Theorem \ref{thm:free}, it remains to prove that with probability tending to $1$, we have $\newbar{\gamma}_n(i)\in\{\ell_n,\ell_n+1\} $ for all $i\ge \tau_n$.

  In the rest of this section, we will consider events $\mathcal{A}$ which, besides the already mentioned 
  parameters $b,k_2,k_1$ and $n$, also may depend on positive real parameters $\delta$ and $\lambda$. 
  We say that~$\mathcal{A}$  has high probability if for every $\varepsilon>0$, we can choose a positive number $b^*$ and functions 
  \begin{align*}
  k_2^*&:(0,+\infty)\to(0,+\infty), \\ 
  k_1^*&:(0,+\infty)^2\to(0,+\infty),\\
  \delta^*&:(0,+\infty)^3\to(0,+\infty),\\ 
  \lambda^*&:(0,+\infty)^4\to (0,+\infty),
  \end{align*}
  such that for all $b\in(b^*,+\infty)$, all $k_2\in(k_2^*(b),+\infty)$, all $k_1\in(0,k_1^*(b,k_2))$, all $\delta\in(0,\delta^*(b,k_2,k_1))$, and all $\lambda\in(0,\lambda^*(b,k_2,k_1,\delta))$, we have $\Prob{\mathcal{A}}>1-\varepsilon$ for all large enough~$n$. 
On the event $\mathcal{D}_9$, we have $d_n<k_2 n^{\zeta-1}$  and $\range \left( \maxPath_n \right) \subset [-k_2 n^{\zeta}, k_2 n^{\zeta}]$. Combining this with Corollary \ref{cor:separation_of_discrepancies}, we see that the event 
\[\mathcal{A}_{10}=\{d(x) - d_n > \delta n^{\zeta-1}\text{  for all $x \in \range \left( \maxPath_n \right)\setminus \{\ell_n\}$}\}\]
has high probability.
  
  Now, consider the set 
  \[
      S = \bigcup_{\substack{x\in [-k_2n^{\zeta}, k_2n^{\zeta})\\\discr{x} \le 5k_2 n^{\zeta-1}}}\{x, x+1\} .
  \]

Let \[
    \mathcal{A}_{11}=\Big\{|F(x-1)|<c-\lambda\text{ and }|F(x+2)|<c-\lambda\text{ for all $x \in [-k_2n^{\zeta}, k_2n^{\zeta}]$ such that $d(x)\le 5k_2 n^{\zeta-1}$}\Big\}.
    \]

  \begin{lemma}
        The event $\mathcal{A}_{11}$ has high probability.
  \end{lemma}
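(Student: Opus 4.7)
The plan is a direct union bound over sites, exploiting the independence of the spatial potential at different locations together with the asymptotics recorded in Lemma~\ref{lem:discrep_density_near_0} and \eqref{p(x)}. Since $d(x)$ depends only on the pair $(F(x),F(x+1))$, and since $F(x-1)$ and $F(x+2)$ are independent of this pair, a single-site bad event factors. Specifically, I would write
\[
\Prob{d(x)\le 5k_2 n^{\zeta-1},\ |F(x-1)|\ge c-\lambda\ \text{or}\ |F(x+2)|\ge c-\lambda}
\le 2\Prob{d(x)\le 5k_2 n^{\zeta-1}}\Prob{|F(0)|\ge c-\lambda}.
\]

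Next I would plug in the two asymptotics: from \eqref{eqdcdf}, one has $\Prob{d(x)\le u}\le C_1 u^{2\kappa+2}$ for all sufficiently small $u$, and from the density assumption \eqref{p(x)} one similarly obtains $\Prob{|F(0)|\ge c-\lambda}\le C_2\lambda^{\kappa+1}$ for all sufficiently small $\lambda$. Taking a union bound over the $O(k_2 n^\zeta)$ integer values of $x$ in $[-k_2n^{\zeta},k_2n^{\zeta}]$ produces
\[
\Prob{\mathcal{A}_{11}^c}\ \le\ C_3\, k_2 n^\zeta\cdot (k_2 n^{\zeta-1})^{2\kappa+2}\cdot \lambda^{\kappa+1}
\ =\ C_3\, k_2^{2\kappa+3}\, n^{\zeta+(2\kappa+2)(\zeta-1)}\, \lambda^{\kappa+1}.
\]
The key cancellation is the identity $(2\kappa+2)(\zeta-1)=-\zeta$ from \eqref{zetaidentity}, which eliminates the $n$-dependence entirely and yields $\Prob{\mathcal{A}_{11}^c}\le C_3 k_2^{2\kappa+3}\lambda^{\kappa+1}$, valid for all $n$ large enough and $\lambda$ small enough.

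Since $\kappa+1>0$, the right-hand side can be made smaller than any prescribed $\varepsilon>0$ by choosing $\lambda$ sufficiently small, uniformly in $n$. In the language of the ``high probability'' definition introduced just before the lemma, this means that for given $b$, $k_2$, $k_1$, $\delta$, I would set the threshold $\lambda^*(b,k_2,k_1,\delta)$ to be any positive number for which $C_3 k_2^{2\kappa+3}(\lambda^*)^{\kappa+1}<\varepsilon$, and the conclusion follows. I do not anticipate a real obstacle: the argument is a routine first-moment estimate, and the only point requiring care is verifying the exponent cancellation~$\zeta+(2\kappa+2)(\zeta-1)=0$ so that the $n$-factor disappears and the bound reduces to something controlled purely by the free parameter $\lambda$.
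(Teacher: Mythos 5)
Your proposal is correct and follows essentially the same route as the paper: exploit the independence of $d(x)$ from $F(x-1)$ and $F(x+2)$, bound $\Prob{d(x)\le 5k_2 n^{\zeta-1}}$ via Lemma~\ref{lem:discrep_density_near_0} and the identity \eqref{zetaidentity} so the $n$-dependence cancels in a union bound over $O(k_2 n^{\zeta})$ sites, and then take $\lambda$ small. The only cosmetic difference is that you quantify $\Prob{|F(0)|\ge c-\lambda}\le C_2\lambda^{\kappa+1}$, whereas the paper only uses that this probability tends to $0$ as $\lambda\to 0$, which suffices since $\lambda$ is the last parameter chosen.
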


  \begin{proof}
     Due to Lemma~\ref{lem:discrep_density_near_0} and \eqref{zetaidentity}, there is $c_2$ such that $\Prob{d(x)\le 5k_2 n^{\zeta-1}}\le c_2 n^{-\zeta}$. 
     Moreover, $d(x)$ is independent from $F(x-1)$ and $F(x+2)$. Thus,
     \begin{align*}\ProbBig{|F(x-1)|\ge c-\lambda\text{ and }d(x)\le 5k_2 n^{\zeta-1}}&\le c_2 n^{-\zeta}\Prob{|F(x-1)|\ge c-\lambda},\\
     \ProbBig{|F(x+2)|\ge c-\lambda\text{ and }d(x)\le 5k_2 n^{\zeta-1}}&\le c_2 n^{-\zeta}\Prob{|F(x+2)|\ge c-\lambda}.
     \end{align*}
     Due to the union bound,
     \[\Prob{\mathcal{A}_{11}} \ge 1-2(2k_2 n^{\zeta}+1)c_2 n^{-\zeta}\Prob{|F(0)|\ge c-\lambda}.\]
     Since $\lim_{\lambda\to 0} \Prob{|F(0)|\ge c-\lambda}=0 $, the statement follows.
  \end{proof}

    Let $\partial S$ be the neighbors of the set $S$, that is,
  \[
    \partial S = \{x \in [-k_2n^{\zeta}, k_2n^{\zeta}] \setminus S \,:\, \{x-1, x+1\} \cap S \neq \emptyset\}.
  \]
  On $\mathcal{A}_{11}$, $S$ consists only of disjoint pairs and for all $x \in \partial S$, we have 
  \[
      \abs{\spacevar{x}} < c - \lambda.
  \]
The next lemma implies that it is unlikely for $\maxPath_n$ to make short excursions away from $S$.

Recall that $\eta_{x}$ was defined in \eqref{eq:eta} as the optimal path restricted to the edge $\{x,x+1\}$.
\begin{lemma}\label{long ex}
Let $[s_1, s_2]\subset[1,n]$ be an interval such that $\maxPath_n(s_1) \in \partial S$, $\maxPath_n\left( (s_1, s_2] \right) \cap S = \emptyset$ and
\[\sum_{i=s_1}^{s_2} B(i)F(\eta_{\ell_n}(i)) > \sum_{i=s_1}^{s_2} B(i)F(\maxPath_n(i)).\]
On the event $\mathcal{A}_{11}$, we have
\[
    s_2 - s_1 +1> \lambda \discr{{\ell_n}}^{-1} > \lambda k_2^{-1} n^{1-\zeta}.
\]
\end{lemma}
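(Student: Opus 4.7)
The plan is to derive the bound from a one-line inequality chain: lower-bound the action of $\maxPath_n$ on $[s_1, s_2]$ using only that its starting site lies in $\partial S$, upper-bound the action of $\eta_{\ell_n}$ on the same interval using the naive estimate \eqref{eq:eta_lower_bound}, and then invoke the strict inequality in the hypothesis. The conclusion will fall out by cancelling a $-c(s_2 - s_1 + 1)$ from both sides.

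First I would split off the endpoint contribution,
\[
\sum_{i=s_1}^{s_2} B(i) F(\maxPath_n(i)) = B(s_1) F(\maxPath_n(s_1)) + \sum_{i=s_1+1}^{s_2} B(i) F(\maxPath_n(i)).
\]
Because $\maxPath_n(s_1) \in \partial S$, the assumption that we are working on $\mathcal{A}_{11}$ gives $|F(\maxPath_n(s_1))| < c - \lambda$, so the first term is at least $-(c - \lambda)$; every summand in the tail is at least $-c$ since $|F| \le c$ everywhere. Hence
\[
\sum_{i=s_1}^{s_2} B(i) F(\maxPath_n(i)) \ge -c(s_2 - s_1 + 1) + \lambda.
\]
Note that the hypothesis $\maxPath_n((s_1, s_2]) \cap S = \emptyset$ does not enter this lower bound; its role is merely contextual, ensuring that the ``cheap'' edge $\{\ell_n, \ell_n+1\}$ is unavailable to $\maxPath_n$ on the excursion.

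Next I would apply \eqref{eq:eta_lower_bound} to $\eta_{\ell_n}$ with $a = s_1 - 1$ and $b = s_2$ (which is legitimate because $\discr{\ell_n} = d_n < c$ forces $F(\ell_n) F(\ell_n+1) < 0$, the hypothesis of \eqref{eq:eta_lower_bound}), obtaining
\[
\sum_{i=s_1}^{s_2} B(i) F(\eta_{\ell_n}(i)) \le (s_2 - s_1 + 1)(\discr{\ell_n} - c).
\]
Combining this with the previous bound and the strict inequality $\sum B(i) F(\eta_{\ell_n}(i)) > \sum B(i) F(\maxPath_n(i))$ yields
\[
(s_2 - s_1 + 1)(\discr{\ell_n} - c) > -c(s_2 - s_1 + 1) + \lambda,
\]
which, after cancelling $-c(s_2 - s_1 + 1)$, simplifies to $(s_2 - s_1 + 1)\, \discr{\ell_n} > \lambda$, i.e., $s_2 - s_1 + 1 > \lambda\, \discr{\ell_n}^{-1}$. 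The final inequality $\lambda\, \discr{\ell_n}^{-1} > \lambda k_2^{-1} n^{1-\zeta}$ is then immediate from the bound $\discr{\ell_n} = d_n \le k_2 n^{\zeta-1}$ coming from the event $\mathcal{A}_3$ (with strictness holding a.s.\ since $d_n$ has a density).

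The argument is essentially a two-line inequality chain, so there is no genuine obstacle. The only points that demand attention are the index-matching in \eqref{eq:eta_lower_bound} (so that the number of summed steps is exactly $s_2 - s_1 + 1$) and the sign convention, namely that a larger value of $\sum B(i) F(\cdot)$ corresponds to a \emph{worse} (less negative) action, so the hypothesis really does say that $\maxPath_n$ strictly outperforms $\eta_{\ell_n}$ on the excursion.
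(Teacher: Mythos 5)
Your proof is correct and is essentially the paper's own argument: the same upper bound on the $\eta_{\ell_n}$ action via \eqref{eq:eta_lower_bound}, the same lower bound $-(c-\lambda)-c(s_2-s_1)$ on the excursion's action using $\maxPath_n(s_1)\in\partial S$ and the event $\mathcal{A}_{11}$, and the same rearrangement; the final inequality $d(\ell_n)<k_2 n^{\zeta-1}$ likewise comes from the earlier high-probability bound on $d_n$, exactly as in the paper's surrounding context.
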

\begin{proof}Due to \eqref{eq:eta_lower_bound}, we have 
\begin{align*}
  (\discr{{\ell_n}}-c)(s_2 - s_1+1)  \geq \sum_{i=s_1}^{s_2} B(i)F(\eta_{\ell_n}(i)) > \sum_{i=s_1}^{s_2} B(i)F(\maxPath_n(i))  >  -(c-\lambda) - c(s_2 - s_1).
\end{align*}
Rearranging, we obtain the desired statement.
\end{proof}

It follows from the Hoeffding inequality that the events
\begin{multline*}\mathcal{A}_{12}=\bigg\{\text{For all intervals $[s_1,s_2]\subset [1,n]$ such that $s_2 - s_1+1  > \lambda k_2^{-1} n^{1-\zeta}$, we have}\\\sum_{\substack{i \in [s_1,s_2) \\ i \textrm{ even}}} \1 \left\{ B(i) \neq B(i+1) \right\}\ge \frac{s_2 - s_1+1}{5}\bigg\}\end{multline*}
and
\begin{multline*}\mathcal{A}_{13}=\bigg\{\text{For all intervals $[s_1,s_2]\subset [1,n]$ such that $s_2 - s_1  +1> \lambda k_2^{-1} n^{1-\zeta}$, we have}\\\left|N^+(s_1,s_2)-\frac{s_2-s_1}{2}\right|< \frac{\delta(s_2 - s_1)}{12k_2}\bigg\}\end{multline*}
have high probability.

We claim that on the event $\mathcal{D}_{13}$, we have $\maxPath_n(i)\in\{\ell_n,\ell_n+1\}$ for all $i\ge \tau_n$. Let us prove this claim by contradiction. Assume that $\maxPath$ leaves $\{\ell_n,\ell_n+1\}$ for some time interval $(t_1,t_2]$. More precisely, let us assume that $\maxPath_n(t_1)\in \{\ell_n,\ell_n+1\}$, $\maxPath_n((t_1,t_2])\cap \{\ell_n,\ell_n+1\}=\emptyset$ and either $t_2=n$ or  $\maxPath_n(t_2+1)\in \{\ell_n,\ell_n+1\}$.

Now, we decompose the path on $(t_1, t_2]$ into two types of intervals.  First, we can take intervals $[s_1, s_2]$ such that $s_1$ is the step before $\maxPath_n$ moves into $S$, and $s_2$ is either $t_2$ or the last step before the path moves back into 
  \[
    S^{\prime} = [-k_2n^{\zeta}, k_2n^{\zeta}] \setminus S.
  \]
 Precisely, $\maxPath_n(s_1)$ and $\maxPath_n(s_2+1)$ are in $S^{\prime}$ and $\maxPath_n\left( [s_1+1, s_2] \right) \subset S$ (Note that then $\maxPath_n(s_1) \in \partial S$).  Denote the set of such intervals of this type on $(t_1, t_2]$ by $\mathcal{I}$. 

The intervals which remain after considering intervals of this type are intervals of steps for which $\maxPath_n$ is contained entirely in $S^{\prime}$, whose leftmost step is an entry of $\maxPath_n$ into $S^{\prime}$ from $S$ (hence is a step in $\partial S$).  The last point in the interval will be two steps before $\maxPath_n$ moves into $S$, by construction of the intervals in $\mathcal{I}$ (or $t_2$).  Call the set of these intervals $\mathcal{J}$.  The intervals in $\mathcal{I}$ and $\mathcal{J}$ form a partition of the points in $(t_1, t_2]$.  

Let us consider the path coinciding with $\maxPath_n$ outside of the excursion time interval $(t_1,t_2]$ and coinciding with the optimal path $\eta_{\ell_n}$ staying in $\{\ell_n,\ell_n+1\}$ on  $(t_1,t_2]$. Comparing the actions of these two paths and using the optimality of $\maxPath_n$, we see that
\[\sum_{i=t_1+1}^{t_2} B(i)F(\eta_{\ell_n}(i)) > \sum_{i=t_1+1}^{t_2} B(i)F(\maxPath_n(i)).\]
Therefore, there is an interval $[s_1,s_2]\in \mathcal{I}\cup \mathcal{J}$ such that 
\begin{equation}\label{s1s2assumption}
 \sum_{i=s_1}^{s_2} B(i)F(\eta_{\ell_n}(i)) > \sum_{i=s_1}^{s_2} B(i)F(\maxPath_n(i)).   
\end{equation}

By Lemma~\ref{long ex}, $s_2-s_1+1>\lambda k_2^{-1} n^{1-\zeta}$. Thus, on the high probability event $\mathcal{A}_{12}\cap\mathcal{A}_{13}$, we~have
\begin{equation}\label{s1s2}
\sum_{\substack{i \in [s_1,s_2) \\ i \textrm{ even}}} \1 \left\{ B(i) \neq B(i+1) \right\}\ge \frac{s_2 - s_1+1}{5}\text{ and }\left|N^+(s_1,s_2)-\frac{s_2-s_1}{2}\right|< \frac{\delta(s_2 - s_1)}{12k_2}.
\end{equation}

First we consider the case that the interval $[s_1, s_2]$ in $\mathcal{J}$.  We use the simple bound from Lemma \ref{lm:bound_path_by_min_discr} to say that the action 
of $\maxPath_n$
on this interval satisfies
  \begin{align*}
   \sum_{i=s_1}^{s_2} B(i)F(\maxPath_n(i))&\ge  -c(s_2 - s_1 +1)  + \sum_{\substack{i \in [s_2,s_1) \\ i \textrm{ even}}} \1 \left\{ B(i) \neq B(i+1) \right\} \cdot \min_{x \in S^{\prime}} \discr{x} \\
     &>  -c(s_2 - s_1+1) +  \frac{(s_2 - s_1+1)}5   \left(5k_2 n^{\zeta-1}\right) \\ 
     &>  (s_2 - s_1+1)( \discr{{\ell_n}}-c)\ge \sum_{i=s_1}^{s_2} B(i)F(\eta_{\ell_n}(i)),
  \end{align*}
where we used~\eqref{eq:eta_lower_bound} in the last inequality. This contradicts \eqref{s1s2assumption}.

Now we consider the case where the interval $[s_1, s_2]$ is in $\mathcal{I}$.  To bound the path in such an interval, we only need to say that for long enough time periods ($s_2 - s_1 +1> \lambda k_2^{-1} n^{1-\zeta}$), the action of a path restricted to a single edge is primarily given by the length of the interval times half the discrepancy of the edge minus $c$. 

By construction, if  $\maxPath_n((s_1,s_2])\subset \{x, x+1\} \subset S$, then the action of $\maxPath_n$ on $(s_1, s_2]$ is at least the action of $\eta_x$ (the optimal path on the edge $\{x, x+1\}$) on $(s_1, s_2]$. We compare this action to the action of~$\eta_{\ell_n}$ using  \eqref{eq:eta_action} and the fact that on the high probability event $\mathcal{A}_{10}$, $\discr{x} - \discr{{\ell_n}} > \delta n^{\zeta-1}$ for all $x\in \range(\maxPath_n)\setminus{\ell_n}$. 

Note that 
\[B(s_1)F(\eta_{\ell_n}(s_1))\le -c+k_2 n^{\zeta-1}\le -c+5k_2 n^{\zeta-1}\le B(s_1)F(\maxPath_n(s_1)),\]
where we used the fact that $[s_1,s_2]$ is in $\mathcal{I}$, so $s_1\in S^{\prime}$.
Thus, we have
\begin{align*}
  \sum_{i=s_1}^{s_2} B(i)F(\eta_{\ell_n}(i)) &- \sum_{i=s_1}^{s_2} B(i)F(\maxPath_n(i))\\
  &<\sum_{i=s_1+1}^{s_2} B(i)F(\eta_{\ell_n}(i)) - \sum_{i=s_1+1}^{s_2} B(i)F(\eta_x(i)) \\
  &= \frac{1}{2}(s_2 - s_1)(\discr{{\ell_n}} - \discr{x}) \\&\qquad\qquad+ \left(  N^+(s_1,s_2) - \frac{s_2-s_1}{2} \right)\left( \spacevar{x} + \spacevar{x+1} - \left( \spacevar{{\ell_n}} + \spacevar{{\ell_n}+1} \right) \right) \\
  & < \frac{1}{2}(s_2 - s_1)(\discr{{\ell_n}} - \discr{x}) + \left|  N^+(s_1,s_2) - \frac{s_2-s_1}{2} \right|\left( \discr{x} + \discr{{\ell_n}} \right)  \\
  & < \left|  N^+(s_1,s_2) - \frac{s_2-s_1}{2} \right|6k_2 n^{\zeta-1}  - \frac{1}{2}(s_2 - s_1)(\delta n^{\zeta-1})<0.
\end{align*}
where at the last inequality, we used \eqref{s1s2}. 
 
This again gives a contradiction and completes the proof of our claim that the path $\maxPath$ stays in $\{\ell_n,\ell_n+1\}$ after $\tau_n$. 

\epf

\section{Distributional convergence -- The proof of Theorem~\ref{thm:weakconv}}
\label{sec:distributional_convergence}
Theorem~\ref{thm:free} implies that $|\ell_n|\to \infty$ in probability. We need a somewhat stronger statement.
\begin{lemma}\label{lntoinfty}
Almost surely, $|\ell_n|\to \infty$ and $\tau_n\to\infty$.
\end{lemma}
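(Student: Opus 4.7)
The first step is to reduce the statement to $|\ell_n|\to\infty$ a.s.: since a lazy walk started at $0$ cannot reach the edge $\{\ell_n,\ell_n+1\}$ in fewer than $|\ell_n|-1$ steps, we have $\tau_n\ge |\ell_n|-1$, so the second claim follows from the first. To prove $|\ell_n|\to\infty$ a.s., I would argue by contradiction for each fixed $M\in \N$: show that, almost surely, $|\ell_n|>M$ for all sufficiently large $n$, and intersect these events over $M\in\N$ to conclude.

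For fixed $M$, set $D_M = \min_{|x|\le M} d(x)$, which is almost surely strictly positive since it is the minimum of finitely many a.s.\ positive random variables. On $\{|\ell_n|\le M\}$, the definition of $\ell_n$ as an argmin forces every $x\in \range(\newbar{\gamma}_n)$ to satisfy $d(x)\ge D_M$. Then Lemma~\ref{lm:bound_path_by_min_discr} combined with the strong law of large numbers applied to the i.i.d.\ Bernoulli$(1/2)$ variables $\1\{B(i)\ne B(i+1)\}$ at even $i$ yields
\[
 A(B,F;\newbar{\gamma}_n) \ge -cn + \tfrac{D_M\, n}{5}
\]
almost surely for all large $n$.

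To contradict this, I would construct a test path. By assumption~\eqref{p(x)}, $\Prob{d(0)<\varepsilon}>0$ for every $\varepsilon>0$, so applying Borel--Cantelli to the i.i.d.\ subsequence $(d(2k))_{k\in\Z}$ gives $\liminf_{|x|\to\infty} d(x) = 0$ almost surely; in particular there a.s.\ exists a (random) site $x_*$ with $|x_*|>M$ and $d(x_*)<D_M/3$. Let $\gamma_*$ move ballistically from $0$ to $x_*$ and then follow $\eta_{x_*}$ for the remaining $n-|x_*|$ steps. Combining the trivial bound $c|x_*|$ on the ballistic part with~\eqref{eq:eta_action2} and the a.s.\ estimate $|N^+(|x_*|,n)-(n-|x_*|)/2|=O(\sqrt{n\log\log n})$ from the law of the iterated logarithm yields
\[
 A(B,F;\gamma_*) \le -cn + 2c|x_*| + (n-|x_*|)\tfrac{d(x_*)}{2} + O(\sqrt{n\log\log n}).
\]
Since $|x_*|$ is fixed while $d(x_*)<D_M/3$, this is eventually strictly less than $-cn + D_M\, n/5$, contradicting $A(B,F;\newbar{\gamma}_n)\le A(B,F;\gamma_*)$. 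The only care needed is to assemble the three ``almost surely'' statements---existence of $x_*$, the LLN for the Bernoulli sums, and the LIL control of $N^+$---on a single full-measure event and intersect over $M\in\N$, which is routine; I do not anticipate any genuine obstacle here.
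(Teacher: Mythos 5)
Your argument is correct, but it takes a different route from the paper's. The paper argues by contradiction via a confinement observation: if $|\ell_n|$ were bounded by $B$ infinitely often, then (since almost surely there exist sites $x_l<0<x_u$ with $d(x_l),d(x_u)<\min_{|x|<B}d(x)$) the minimizer could never visit $x_l$ or $x_u$, hence would be trapped in the finite window $(x_l,x_u)$, forcing $n^{-1}A(B,F;\newbar{\gamma}_n)\ge -\max_{x\in(x_l,x_u)}|F(x)|>-c$; this contradicts the already-established almost sure limit $n^{-1}A(B,F;\newbar{\gamma}_n)\to -c$, which the paper simply quotes from $\shape(0)=c$ (Lemma~\ref{lem:shape-at-0}). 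You instead avoid both the confinement step and the shape function: your lower bound on the action under $\{|\ell_n|\le M\}$ comes from the quantitative discrepancy bound of Lemma~\ref{lm:bound_path_by_min_discr} together with the law of large numbers for the sign changes of $B$, and your upper bound is obtained by explicitly building a competitor path to a far site of small discrepancy, controlled via \eqref{eq:eta_action2} --- in effect re-proving a quantitative version of Lemma~\ref{lem:shape-at-0} inside the argument (the law of the iterated logarithm is more than you need there; the strong law already gives $o(n)$, which suffices since you compare $d(x_*)/2<D_M/6$ against $D_M/5$). The paper's proof is shorter because it recycles Lemma~\ref{lem:shape-at-0}, while yours is more self-contained and yields an explicit linear-in-$n$ gap between the action of any path with $|\ell_n|\le M$ and the optimal one; both are valid, and your reduction $\tau_n\ge|\ell_n|-1$ matches the paper's ``the second one easily follows.''
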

\begin{proof}
Assume that with positive probability there is $B$ such that $|\ell_n|<B$ infinitely often. Conditioned on this event, there are $x_l<0$ and $x_u>0$ such that $d(x_l),d(x_u)<\min_{|x|<B} d(x)$. Then if $|\ell_n|<B$, then $\newbar{\gamma}_n$ must stay in $(x_l,x_u)$. Thus, $A(B,F,\newbar{\gamma}_n)\ge -n\max_{x\in (x_l,x_u)} |F(x)|$, which contradicts the fact that $\lim_{n\to\infty } n^{-1}A(B,F,\newbar{\gamma}_n)=-c$ almost surely, since $\shape(0)=c$. Thus, the first claim of the lemma holds. The second one easily follows.
\end{proof}

Using Lemma~\ref{lemma3} and the fact that  $|\ell_n|\to \infty$, we see that
\begin{align*}
\newbar{A}(B,F;M |\ell_n|,\ell_n)&=-M \shape\left(\frac{1}M\right)|\ell_n|+o(\ell_n)\\
\newbar{A}(B,F;M |\ell_n|,\ell_n+1)&=-M \shape\left(\frac{1}M\right)|\ell_n|+o(\ell_n)
\end{align*}
Here $o(\ell_n)$ means a random variable $Z_n$ such that $Z_n/\ell_n\to 0$ almost surely.

For each $n$, consider the event that $\tau_n<M|\ell_n|$ and $\gamma_n(i)\in \{\ell_n,\ell_n+1\}$ for all $i\ge \tau_n$. The probabilities of these events tend to $1$ by our assumption \eqref{eq:conjecture-time-to-min-discr} and Theorem~\ref{thm:free}. On this event, we have $\newbar{\gamma}_n(M|\ell_n|)\in \{\ell_n,\ell_n+1\}$. Thus, on this event, we have
\begin{equation}\label{onevent}\newbar{A}(B,F,M |\ell_n|,\newbar{\gamma}_n(M|\ell_n|))=-M \shape\left(\frac{1}M\right)|\ell_n|+o(\ell_n).
\end{equation}
By the law of large numbers, $N^+(0,n)=\frac{n}{2}+o(n)$. 
This, along with Lemma~\ref{lntoinfty}, implies \[N^+(\tau_n,n)=N^+(0,n)-N^+(0,\tau_n)=\frac{n}2+o(n)-\left(\frac{\tau_n}2-o(\tau_n)\right)=\frac{n-\tau_n}{2}+o(n).\] 
Therefore, using \eqref{eq:eta_action2}, we see that 
\begin{align*}\sum_{i=M |\ell_n|+1}^n B(i)F(\eta_{\ell_n}(i))&=(n-M|\ell_n|)\left(\frac{d_n}2-c\right)+d_n o(n)\\&=(n-M|\ell_n|)\left(\frac{d_n}2-c\right)+n^{1-\zeta}d_n o(n^{\zeta}),\end{align*}
where we used the notation $d_n=d(\ell_n)$ introduced in~\eqref{eq:def_ell_d_tau}.

On the event above,
\begin{align}\label{Ag}A(B,F;\newbar{\gamma}_n)+cn&=\newbar{A}(B,F,M |\ell_n|,\newbar{\gamma}_n(M|\ell_n|))+\sum_{i=M |\ell_n|+1}^n B(i)F(\eta_{\ell_n}(i))+cn\\&=M\left(c- \shape\left(\frac{1}M\right)\right) \ell_n+n\frac{d_n}2+o(n^\zeta)(n^{-\zeta}|\ell_n|+n^{1-\zeta}d_n).\nonumber
\end{align}

By Theorem \ref{thm:free}, $n^{-\zeta}|\ell_n|+n^{1-\zeta}d_n$ is tight. Thus,
\begin{equation}\label{Agto0}
    \frac{A(B,F;\newbar{\gamma}_n)+cn}{n^\zeta}-g(n^{-\zeta}\ell_n,n^{1-\zeta}d_n)
\end{equation}
converges to $0$ in law, where  
\begin{equation}
\label{eq:g_and_s}
g(x,y)=s|x|+\frac{y}2\quad\text{ and }\quad s=M\left(c- \shape\left(\frac{1}M\right)\right).
\end{equation}

The next lemma gives us another expression for $s$. Note that this lemma also gives us Theorem~\ref{linearunderassumption}.
\begin{lemma}
Under the assumption of \eqref{eq:conjecture-time-to-min-discr}, the shape function $\shape$ is linear on the interval~$\left[0,\frac{1}M\right]$. So, the definition
\eqref{eq:g_and_s} does not depend on the choice of $M$ in~\eqref{eq:conjecture-time-to-min-discr}, and
\[s=-\shape'(0+)=M'\left(c- \shape\left(\frac{1}{M'}\right)\right),\quad M'>M.\]
\end{lemma}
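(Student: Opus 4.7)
The plan is to exploit the fact that the Assumption is monotone in $M$: if $\Prob{\tau_n<M|\ell_n|}\to 1$, then trivially $\Prob{\tau_n<M'|\ell_n|}\to 1$ for every $M'>M$. Consequently the entire derivation culminating in the relation~\eqref{Agto0} goes through verbatim with $M$ replaced by any $M'>M$, yielding
\begin{equation*}
\frac{A(B,F;\newbar{\gamma}_n)+cn}{n^\zeta}-g_{M'}(n^{-\zeta}\ell_n,n^{1-\zeta}d_n)\xrightarrow{\Pp} 0,\qquad g_{M'}(x,y)=s_{M'}|x|+\frac{y}{2},
\end{equation*}
where $s_{M'}=M'(c-\shape(1/M'))$. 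Since the left-hand side is the same random variable for every choice of $M'\ge M$, subtracting the approximations for two parameters $M\le M'$ gives
\begin{equation*}
(s_M-s_{M'})\,|n^{-\zeta}\ell_n|\xrightarrow{\Pp} 0.
\end{equation*}

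The key step is to upgrade this to the equality $s_M=s_{M'}$. For this I invoke Theorem~\ref{thm:free}, which guarantees that for any $\varepsilon>0$ there exists $k_1>0$ such that $|n^{-\zeta}\ell_n|\ge k_1$ with probability at least $1-\varepsilon$ for all large $n$. If $s_M\ne s_{M'}$, then $|(s_M-s_{M'})n^{-\zeta}\ell_n|\ge|s_M-s_{M'}|k_1>0$ on an event of probability at least $1-\varepsilon$, contradicting convergence in probability to $0$. Hence $s_M=s_{M'}$, i.e.
\begin{equation*}
M(c-\shape(1/M))=M'(c-\shape(1/M'))=:s\qquad\text{for every }M'\ge M.
\end{equation*}

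Setting $\alpha=1/M'\in(0,1/M]$, the displayed identity rewrites as $\shape(\alpha)=c-s\alpha$, so $\shape$ is affine on $[0,1/M]$ with slope $-s$; by the symmetry in Theorem~\ref{shape function}, the same holds on $[-1/M,0]$. Combined with $\shape(0)=c$ from~\eqref{eq:shape-at-0}, this proves linearity on $[0,1/M]$, shows that $s$ does not depend on the particular choice of $M$ satisfying~\eqref{eq:conjecture-time-to-min-discr}, and gives $\shape'(0+)=-s$. The only nontrivial ingredients are the already established convergence~\eqref{Agto0} and the tightness lower bound for $|n^{-\zeta}\ell_n|$ from Theorem~\ref{thm:free}; the monotonicity in $M$ of the Assumption makes the argument essentially automatic, so I expect no substantial obstacle beyond bookkeeping.
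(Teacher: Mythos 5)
Your proposal is correct, and it reaches the conclusion by a mechanism different from the paper's. The shared core is the observation that the Assumption for $M$ transfers to every $M'>M$ (on the relevant event the optimizer already sits on the edge $\{\ell_n,\ell_n+1\}$ at time $M'|\ell_n|$), together with the asymptotics $\newbar{A}(B,F;M'|\ell_n|,\newbar{\gamma}_n(M'|\ell_n|))=-M'\shape(1/M')|\ell_n|+o(\ell_n)$; from there the routes diverge. The paper stays inside the lemma: it compares the optimal path's action at times $M|\ell_n|$ and $M'|\ell_n|$, bounds the intermediate increment crudely by $-(M'-M)c|\ell_n|$, passes to an almost-sure subsequence to get the one-sided inequality $M'\bigl(c-\shape(1/M')\bigr)\ge M\bigl(c-\shape(1/M)\bigr)$, and then invokes concavity of $\shape$ (with $\shape(0)=c$) for the reverse inequality and hence linearity. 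You instead rerun the expansion \eqref{Ag}--\eqref{Agto0} for both $M$ and $M'$, subtract the two approximations of the same random variable, and use the lower tightness bound $|\ell_n|\ge k_1 n^{\zeta}$ from \eqref{eq:free1} of Theorem~\ref{thm:free} to force $s_M=s_{M'}$ outright, with no appeal to concavity. What each buys: the paper's argument is lighter on inputs (it needs only \eqref{onevent}-type asymptotics plus concavity, not the full expansion \eqref{Agto0} or the lower bound on $|\ell_n|$), while yours gets the equality of slopes in one shot and makes transparent exactly where non-degeneracy of $\ell_n$ enters. There is no circularity in your use of \eqref{Agto0} and \eqref{eq:free1}, since both are established before the lemma and do not rely on it; the only small points to make explicit are that convergence in law to the constant $0$ in \eqref{Agto0} is convergence in probability (so the subtraction is legitimate) and that the endpoint $\alpha=1/M$ is covered by the definition of $s_M$ itself, both of which your write-up effectively handles.
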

\begin{proof}
Let $M'>M$. For all $n$, on the event \[\mathcal{A}_n=\Big\{ \tau_n<M|\ell_n|\ \text{\rm and}\ \gamma_n(i)\in \{\ell_n,\ell_n+1\}\ \text{\rm for all}\ i\ge \tau_n\Big\},\]
we have
\[\frac{1}{|\ell_n|}\sum_{i=1}^{M'|\ell_n|}B(i)F(\newbar{\gamma}_n(i))\ge \frac{1}{|\ell_n|}\sum_{i=1}^{M|\ell_n|}B(i)F(\newbar{\gamma}_n(i))-(M'-M)c,\]
i.e.,
\[\frac{1}{|\ell_n|}\newbar{A}(B,F;M' |\ell_n|,\newbar{\gamma}_n(M'|\ell_n|))\ge \frac{1}{|\ell_n|}\newbar{A}(B,F;M |\ell_n|,\newbar{\gamma}_n(M|\ell_n|))-(M'-M)c.\]
Similarly to \eqref{onevent}, we obtain that on $\mathcal{A}_n$, 
\begin{equation}\label{linfromas}-M'\shape\left(\frac{1}{M'}\right)+o(1)\ge -M\shape\left(\frac{1}{M}\right)+o(1)-(M'-M)\shape(0). \end{equation}

Since $\Prob{\mathcal{A}_n}$ tends to $1$, we can choose a subsequence $n'$ such that $\lim \mathbbm{1}_{\mathcal{A}_{n'}}=1$ almost surely. Taking the limit of \eqref{linfromas} along $n'$, we obtain
\[-M'\shape\left(\frac{1}{M'}\right)\ge -M\shape\left(\frac{1}{M}\right)-(M'-M)\shape(0),\]
which together with the concavity of the shape function $\shape$ implies the statement.
\end{proof}

Let \[x_n=\argmin_{k\in \Z} g(n^{-\zeta}x,n^{1-\zeta}d(x)).\]

\begin{lemma}
We have $|x_n|\to\infty$ almost surely.
\end{lemma}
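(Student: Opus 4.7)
The plan is to reduce the statement to showing that for each fixed $B \in \N$, one has $|x_n| > B$ for all sufficiently large $n$ almost surely, and then take a countable intersection. The first step is to rewrite the minimization: since
\[
g(n^{-\zeta}k,\, n^{1-\zeta}d(k)) \;=\; n^{-\zeta}\!\left(s|k| + \tfrac{n}{2}\,d(k)\right),
\]
the random index $x_n$ is exactly the minimizer over $k \in \Z$ of $h_n(k) := s|k| + \tfrac{n}{2}\,d(k)$. The key observation is that $\zeta<1$, so the cost of staying near the origin (of order $n\cdot d(k)$ with $d(k)$ bounded away from $0$ for small $|k|$) will eventually exceed the cost $O(|k|)$ of travelling to a distant site with a very small discrepancy.

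Next, I would work on the almost sure event $\Omega^*$ on which (i) $d(k)>0$ for all $k\in\Z$, and (ii) for every rational $u>0$ there exist $k\in\Z$ with $d(k)<u$. Property (i) is immediate because $\Pp(d(k)=0)=0$ for each $k$ (the density $\varrho$ vanishes at $\pm c$), so a countable union bound applies. For (ii), note that $(d(2k))_{k\in\Z}$ are i.i.d.\ (their defining random variables $F(2k), F(2k+1)$ are disjoint), and Lemma~\ref{lem:discrep_density_near_0} gives $\Pp(d(0)<u)>0$ for every $u>0$; the second Borel--Cantelli lemma then produces infinitely many such indices, and intersecting over rational $u>0$ gives (ii).

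Now fix $B\in\N$ and work on $\Omega^*$. Set $\delta_B := \min_{|k|\le B} d(k) > 0$, and pick (any) $k_0 = k_0(\omega) \in \Z$ with $d(k_0) < \delta_B/3$. Then
\[
h_n(k) \;\ge\; \tfrac{n}{2}\,\delta_B \quad \text{for all } |k|\le B, \qquad h_n(k_0) \;\le\; s|k_0| + \tfrac{n}{6}\,\delta_B.
\]
For $n > 3 s|k_0|/\delta_B$ this forces $h_n(k_0) < h_n(k)$ for every $|k|\le B$, so $|x_n| > B$. Since $B$ was arbitrary, the countable intersection over $B \in \N$ yields $|x_n|\to\infty$ on $\Omega^*$.

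The argument is elementary and I do not expect a serious obstacle; the only point that requires a modicum of care is keeping track of what is random and what is deterministic in the Borel--Cantelli step, since both $\delta_B$ and $k_0$ depend on $\omega$. The decisive ingredients are already in place: continuity of the distribution of $F$ at $\pm c$ (to rule out $d(k)=0$), the i.i.d.\ structure of $(d(2k))$ combined with Lemma~\ref{lem:discrep_density_near_0} (to produce arbitrarily small discrepancies), and the scaling $\zeta<1$ (so that the linear-in-$n$ lower bound on $h_n$ near the origin eventually beats the finite ``travel cost'' $s|k_0|$).
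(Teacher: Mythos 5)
Your proposal is correct and rests on the same mechanism as the paper's proof: the minimizer $x_n$ of $s|k|+\tfrac{n}{2}d(k)$ cannot stay in a fixed window $[-B,B]$ because $\min_{|k|\le B}d(k)>0$ almost surely while sites of arbitrarily small discrepancy exist almost surely (the paper invokes this via a contradiction argument, you run the same comparison directly with an explicit competitor $k_0$ and a countable intersection over $B$). This is only a cosmetic difference in presentation, not a different route.
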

\begin{proof} We give an argument similar to the proof of Lemma~\ref{lntoinfty}.
Assume that with positive probability there is $B$ such that $|x_n|<B$ infinitely often. Then there is $\delta>0$ such that with positive probability $|x_n|<B$  and $d(x_n)\ge \min_{|x|<B} d(x)>\delta$ infinitely often. Then with positive probability $\min_{x\in \Z} g(n^{-\zeta} x,n^{1-\zeta}d(x))=g(n^{-\zeta} x_n,n^{1-\zeta}d(x_n))>\delta n^{1-\zeta}/2$ infinitely often. Thus, with positive probability, there are infinitely many values of $n$ such that for all $x\in [-\frac{n\delta}{4s},\frac{n\delta}{4s}]$, we have $d(x)>\delta/2$.  Indeed, for an $x\in [-\frac{n\delta}{4s},\frac{n\delta}{4s}]$ such that $d(x)\le \delta/2$, we would have
\[g(n^{-\zeta} x,n^{1-\zeta}d(x))=sn^{-\zeta} |x|+\frac{n^{1-\zeta}d(x)}2\le sn^{-\zeta} \frac{n\delta}{4s}+\frac{n^{1-\zeta}\delta/2}{2}\le \delta n^{1-\zeta}/2.\]
Therefore, with positive probability, we have $d(x)>\delta/2$ for all $x\in\mathbb{Z}$, which is a contradiction. 
\end{proof}

Theorem \ref{thm:free} implies that $g(n^{-\zeta}\ell_n,n^{1-\zeta}d_n)$ is tight, so \[n^{-\zeta}|x_n|\le \frac{\min_{x\in \Z}g(n^{-\zeta}x,n^{1-\zeta}d(x)) }s\le \frac{g(n^{-\zeta}\ell_n,n^{1-\zeta}d_n)}{s} \] must be tight. By a similar argument $n^{1-\zeta}d(x_n)$ is also tight.

Let $\gamma'=\argmin_{\gamma\in\Gamma(Mx_n,x_n)} A(B,F;\gamma)$, and let
\[\gamma''=\begin{cases}\gamma'(i)& i\le Mx_n\\
\eta_{x_n}(i)&Mx_n<i\le n.
\end{cases}\]

A similar calculation  as above shows that 
\begin{multline*} s|\ell_n|+n\frac{d_n}2+o(n^\zeta)(n^{-\zeta}|\ell_n|+n^{1-\zeta}d_n)=A(B,F;\newbar{\gamma}_n)+cn\\\le A(B,F;\gamma'')+cn=s |x_n|+n\frac{d(x_n)}2+o(n^\zeta)(n^{-\zeta}|x_n|+n^{1-\zeta}d(x_n))\end{multline*}
Indeed, the first equality was proved above in \eqref{Ag}. For the last equality, we bound the action of $\gamma'$ by combining Lemma~\ref{lemma3} and the fact that $|x_n|\to\infty$ almost surely. The action of the segment of $\gamma''$ after time $Mx_n$ is bounded by using \eqref{eq:eta_action2} and the law of large numbers.
Thus,
\begin{multline*}g(n^{-\zeta}x_n,n^{1-\zeta}d(x_n))\le g(n^{-\zeta}\ell_n,n^{1-\zeta}d_n)\\\le g(n^{-\zeta}x_n,n^{1-\zeta}d(x_n)) +o(n^{-\zeta}|\ell_n|+n^{1-\zeta}d_n+n^{-\zeta}|x_n|+n^{1-\zeta}d(x_n)).
\end{multline*}

By tightness of the argument of $o(\cdot)$, we see that $g(n^{-\zeta}
x_n,n^{1-\zeta}d(x_n))- g(n^{-\zeta}\ell_n,n^{1-\zeta}d_n)$ converges to $0$ in law.

Using Theorem \ref{thm:convtoPPP}, we see that $g(n^{-\zeta}x_n,n^{1-\zeta}d(x_n))$ converges to
\[\min_{(x,y)\in \supp(\nu)}g(x,y).\]

As we observed in \eqref{Agto0}, $\frac{A(B,F;\newbar{\gamma}_n)+cn}{n^\zeta}$ has the same limit.

The pushforward of $\nu$ by $g$ is also a Poisson point process with intensity 

\[r(x)=2\int_{0}^x \frac{p_{\kappa}q^22^{2\kappa+2}}{s} u^{2\kappa+1}du=\frac{p_{\kappa}q^22^{2\kappa+2}}{s(\kappa+1)}x^{2\kappa+2}.\]
Thus.
\[\Prob{\min_{(x,y)\in \supp(\nu)}g(x,y)\ge t}=\exp\left(-\int_0^t r(x)dx\right)=\exp\left(-\frac{p_{\kappa}q^22^{2\kappa+2}}{s(\kappa+1)(2\kappa+3)}t^{2\kappa+3}\right).\]
Let us define
\[\newtau=\left(\frac{p_{\kappa}q^22^{2\kappa+2}}{s(\kappa+1)(2\kappa+3)}\right)^{-\frac{1}{2\kappa+3}}.\]
Then 
\[\Prob{\frac{\min_{(x,y)\in \supp(\nu)}g(x,y)}{\newtau}\ge t}=\exp\left(-t^{2\kappa+3}\right),\]
which completes the proof of Theorem~\ref{thm:weakconv}. \epf

\section{Concluding remarks and open problems}\label{secopenproblems}

In this section we present heuristic arguments explaining the transition from the linear to the non-linear behavior of the shape function $\Lambda (\alpha)$. We also formulate a few open problems 
related to our directed polymer model.

We first address the Assumption 
(\ref{eq:conjecture-time-to-min-discr}) saying that asymptotically
the time $\tau_n$ of reaching the position $\ell_n$ by the optimal path
is bounded by $M\ell_n$, where $M$ is a positive constant. For a fixed 
realization of the spatial disorder $\{F(x), \, x \in \Z\}$,
one can think of the optimal path as a solution to the stochastic optimal control problem with respect to the temporal disorder given by the random sign $B(i), \, i \in \N$. The optimal path adjusts
its route from the origin to $\ell_n$ taking into account both $B(i)$ and
the discrepancies $d(x)$ of visited sites. When $d(x)$ is small, it
is beneficial to spend more time on an edge $\{x,x+1\}$ in order to achieve a "better" sequence of signs $B(j)$ for further movement.
Such waiting can be characterized by local time $t(x)$ for an edge
$\{x,x+1\}$. It is natural 
to expect that $t(x) \sim g(d(x))$ for some non-random function
$g(\epsilon)$ which diverges to $\infty$ as $\epsilon\to 0$.
Since the density for discrepancy near $0$ behaves as $\epsilon^{1+2\kappa}$, the 
above assumption is equivalent to the following estimate:
\begin{equation}
\label{integral converges}
\int_0^{2c}{g(\epsilon)\epsilon^{1+2\kappa}d\epsilon}<+\infty.
\end{equation}
Below we assume that $g(\epsilon)$ diverges as a power law as $\epsilon \to 0$,
namely $g(\epsilon) \sim G\epsilon^{-\gamma}$, where the exponent $\gamma$ is universal. In other words, it does not depend on the probability distribution of the spatial disorder $F$. In particular, we assume that $\gamma$ does not depend on $\kappa$. 
With the above assumption on the asymptotic behavior of $g(\epsilon)$ the estimate (\ref {integral converges}) holds for $\kappa>\gamma/2 -1$. In this range of $\kappa$ the law of large numbers
would imply that $\tau_n \sim M_0\ell_n$, where $M_0$
is a non-random positive constant which depends on the probability distribution for $F$. 

Coming back to the behavior of the shape function, we consider a space interval $[0,[\alpha n]]$.  Denote  $\{\ell_n,\ell_n+1\}$ the edge with the
smallest value of the discrepancy. 
Namely, $d(\ell_n)<d(y)$ for all
$y \in [0,[\alpha n]]$. Provided that the time $n$ is large enough,
the optimal path will go in the optimal way from the origin to 
the edge $\{\ell_n,\ell_n+1\}$, will stay there for certain time, and then
go, again in the optimal way, from the edge $\{\ell_n,\ell_n+1\}$ to the
point $[\alpha n]$. The previous discussion imply that two pieces of the optimal path, the one until it reaches $\{\ell_n,\ell_n+1\}$ and the other one after the path leaves this edge, together will require time $M_0[\alpha n]$. Thus, if $n>M_0[\alpha n]$ the optimal path
will spend time $(1-M_0\alpha)n$ on the edge $\{\ell_n,\ell_n+1\}$. It is easy to see that in this 
case the shape function will have linear edge near the origin:
$$ \shape (\alpha) = C(1-M_0\alpha) + \shape(1/M_0)M_0\alpha,\quad \alpha\leq 1/M_0.  $$

On the other hand,  for $\alpha > 1/M_0$ the path does not have enough time to reach $\{\ell_n,\ell_n+1\}$ and to go from there to $[\alpha n]$ in the optimal way. Thus the path would need to compromise. This means that
the path will have to go ballistically through some edges. The process of
finding the optimal strategy in the case of lack of time is much more  subtle which will result in nonlinear behavior of the shape function. Since for $\kappa<\gamma/2-1$ the integral (\ref{integral converges}) diverges, we expect this type of behavior for all
$\alpha$ for those $\kappa$. In other words, we expect that 
the shape function does not have a linear piece for $-1<\kappa<\gamma/2-1$. Combining this with Theorem~\ref{shape function linear} stating that $\kappa>0$ implies existence of the flat edge of the shape function near the origin, we obtain $\gamma \leq 2$. 

Although Theorem~\ref{thm:weakconv} 
formally cannot be applied in the above range $-1<\kappa<\gamma/2-1$, we expect that a similar statement holds true for all $\kappa>-1$.
The only difference is that $s=M\left(c-\Lambda\left(\frac{1}{M}\right)\right)$ in the proof of Theorem~\ref{thm:weakconv}
should be replaced by 
$$s=\lim_{M\to \infty}{M\left(c-\Lambda\left(\frac{1}{M}\right)\right)}=-\frac{d\Lambda}{d\alpha}(0+).$$ Such limit exists due to
convexity of $\Lambda$. It also follows from \eqref{eq:rect_shape_bound1} of Theorem~\ref{shape function non-linear} that $s>0$.

\

We finish with a discussion of open problems related to our model.
Most of the questions discussed below are relevant also for other
models of a similar type. Namely when the spatial and temporal disorder are separated, and their interaction can be described
as $\Phi(F(x),B(i))$, where $\Phi$ is a deterministic function.

Above we considered statistics of optimal paths corresponding to
either fixed time $n$, or fixed final position $\ell_n$.
In the case when disorder is independent for all points $(x,i) \in \Z^2$ one can prove existence of minimizers corresponding to infinite time (\cite {HN, BCK}). An infinite path $\bar\gamma(i), i \geq 0$
is called a one-sided minimizer at the origin if $\bar\gamma(0)=0$ and for any other path $\tilde\gamma$ such that $\tilde\gamma(0)=0$ and
there exists $N$ such that $\bar\gamma(i)=\tilde\gamma(i)$ for all $i\geq N$, the difference in action $A(B,F;\tilde\gamma_n) - A(B,F;\bar\gamma_n)\geq 0$ for all $n\geq N$. It is natural to expect
that similar result on existence and uniqueness of one-sided minimizers holds also in our model. But currently it is an open problem.

Another set of question is related to the case of positive temperature. Then one has to study asymptotic properties as $n \to \infty$ of Gibbs measures $P_\beta(\gamma_n)$: 
$$P_\beta(\gamma_n) = \frac{1}{Z_n(\beta)}\exp{(-\beta A(B,F; \gamma_n)}, \, Z_n(\beta) = \sum _{\gamma_n} \exp{(-\beta A(B,F; \gamma_n)}.$$ 
In the case of completely independent disorder such questions were
studied by Carmona, Hu and Comets, Shiga, Yoshida (\cite{CH, CSY}). It was proved
that in the case of spatial dimensions one and two the polymer
measure $P_\beta$ exhibits strong localization properties.
On the other hand starting from dimension three there is a transition from diffusive behavior for small $\beta$ (weak disorder) to strong disorder for $\beta >\beta_{cr}$.
Again, one can expect that some of the above results can be extended
to the case of the model studied in this paper. In fact, we would expect that localization
properties are stronger in our model.

Let us finally mention the problem which is related to our numerical studies.
We already mentioned that statistical properties may be studied
separately for spatial and temporal disorder. Consider a fixed realization of spatial disorder $F$. Denote by $e_i$ a sequence
of edges $e_i=\{x_i,x_i+1\}, x_i\geq 0$ corresponding to records of
the discrepancy $d$. Namely, $d_i=d(x_i) < d(x)$ for all $0\le x<x_i$. Fix large $n$ and consider all paths $\gamma=\gamma (i), \, i \geq 0$ 
such that they go from the origin to the edge $e_n$ and then stay
on this edge jumping according to the sign of $B$.  We also assume that $0\leq \gamma(i) \leq x_n+1$. Notice that we do not fix
the time $m$ when the edge $e_n$ is reached for the first time.
Denote by $A_n(B)$ the minimal action accumulated on the way from the origin to $e_n$:
$$A_n(B)=\min_{\gamma}A_{m(\gamma)}(B,F;\gamma).$$
We studied numerically the variance of $A_n(B)$ for a fixed
realization of $F$. According to our numerics the variance does not
grow with $n$. It remained bounded even when $x_n$ was order $10^4$.
At this moment we don't have  good explanation for this phenomenon.

\begin{appendix}

\section{Detailed proof of Lemma \ref{th:shape_function_exists}} \label{sec:appendix}

	From the construction of the model, we have the following bound.  
	For points $(t_1, x_1), (t_2, x_2) \in \cone \cap \Z^2$, we have
	\begin{align*}
		| \rectAction \left( (t_1, x_1), (t_1 + t_2, x_1 + x_2) \right) | \leq ct_2,
	\end{align*}
	and so subadditivity (see \eqref{eqsuperadditivity}) gives the following: for $(t_1, x_1), (t_2, x_2) \in \cone \cap \Z^2$, we have 
	\begin{align}
		\label{eq:action_bound}
		\rectAction(t_1, y_1) \geq \rectAction(t_1 + t_2, x_1 + x_2) - ct_2.
	\end{align}

Following  \cite{seppalainen2009lecture}, we first prove that for $(t,x)\in \cone \cap \Z^2$, $n^{-1}A^*(nt,nx)$ converge almost surely to a constant $\rectShape(t,x)$, then extend this to rational points, and finally all real numbers.  
Thus, first suppose that $(t, x) \in \cone \cap \Z^2$.  If $x=0$,  Lemma~\ref{lem:shape-at-0} gives that \[\lim_{n\to\infty} n^{-1} A^*(nt,0)=-ct\text{ almost surely}.\]

To handle the case $x\neq 0$, we will use Kingman's subadditive  ergodic theorem \cite{kingman1968ergodic,kingman1973subadditive}. Below we state Liggett's version of this theorem~\cite{liggett1985improved}. 

		

\begin{theorem}\label{Kingman}
Suppose $( X_{m,n} )$ is a sequence of random variables indexed by integers $0\leq m < n$, such that 
\begin{enumerate}[(i)]
	\item 
		
  \hfil
   $X_{0,n} \leq X_{0,m}+X_{m,n} \text{ whenever } 0 < m < n.
		$\hfil
	\item The joint distributions of $\left( X_{m+1, m+k+1}, k\ge 1 \right)$ are the same as those of $\left( X_{m, m+k}, k\ge 1 \right)$ for each $m\ge 0$.

\item \label{statitem}For each $k\ge 1$, $\left( X_{nk, (n+1)k}, n\ge 1 \right)$  is a stationary process.
 
	\item For each $n$, $\Exp|X_{0,n}| < \infty$ and $\Exp X_{0,n} \geq -cn$ for some constant $c$.
\end{enumerate}
 Then the limit $X = \lim_{n \to \infty} \frac{1}{n} X_{0,n} $ exists almost surely and in $L^1$, and 
 \[
 	\Exp X = \lim_{n \to \infty} \frac{1}{n} \Exp X_{0,n} = \inf_n  \frac{1}{n} \Exp X_{0,n}
 \]
 Furthermore, if the stationary processes in \eqref{statitem}  are ergodic, $X = \Exp X$ a.s.  
\end{theorem}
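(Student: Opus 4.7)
The plan is to reproduce the classical proof due to Liggett, whose improvement of Kingman's original argument uses precisely the weaker stationarity hypotheses (ii)--(iii) given in the statement. The argument splits into three pieces: extract a deterministic candidate $\gamma$ from the means via Fekete's lemma, bound $\limsup_n n^{-1}X_{0,n}$ above by $\gamma$ via Birkhoff's ergodic theorem, and match this with a lower bound on $\liminf_n n^{-1}X_{0,n}$ via a stopping-time renewal decomposition. Subadditivity (i) combined with the one-step shift invariance (ii) gives $\Exp X_{0,m+n} \leq \Exp X_{0,m} + \Exp X_{0,n}$, so $a_n := \Exp X_{0,n}$ is a subadditive sequence; Fekete yields $\gamma := \lim_n a_n/n = \inf_n a_n/n$, and (iv) keeps $\gamma \geq -c$.

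For the upper bound, fix $n\geq 1$ and decompose an arbitrary $m$ as $m=kn+r$ with $0\leq r<n$. Iterating (i) gives
\[
X_{0,m} \leq \sum_{i=0}^{k-1} X_{in,(i+1)n} + X_{kn,m}.
\]
By (iii) the sequence $\{X_{in,(i+1)n}\}_{i\geq 0}$ is stationary, so Birkhoff's theorem gives almost sure convergence of its Ces\`aro averages to $\Exp[X_{0,n}\mid \mathcal{I}_n]$, where $\mathcal{I}_n$ denotes the shift-invariant $\sigma$-field of the $n$-block process. The remainder $X_{kn,m}/m$ vanishes in the limit by (iv) combined with the symmetric upper bound coming from extending the last block. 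Hence $\limsup_m m^{-1}X_{0,m} \leq n^{-1}\Exp[X_{0,n}\mid \mathcal{I}_n]$ a.s.; taking expectations and the infimum over $n$ yields $\Exp[\limsup_m m^{-1}X_{0,m}] \leq \gamma$.

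For the lower bound, set $\underline{X} := \liminf_n n^{-1}X_{0,n}$, which is a.s. at least $-c$; the task is to show $\Exp\underline{X} \geq \gamma$. Given $\varepsilon>0$ and a truncation level $N$, define the stopping time $\tau := \min\{n\geq 1 : X_{0,n}/n \leq \underline{X} + \varepsilon\} \wedge N$, form the renewal sequence $T_0=0$, $T_{j+1}=T_j+\tau\circ\theta^{T_j}$, and use (i) to get $X_{0,T_k} \leq \sum_{j<k} X_{T_j,T_{j+1}}$. On blocks where $\tau\circ\theta^{T_j}<N$ each summand is bounded by $(T_{j+1}-T_j)(\underline{X}\circ\theta^{T_j}+\varepsilon)$; on saturated blocks a crude $-cN$ bound from (iv) suffices. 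Stationarity of the induced block structure and Birkhoff applied to the shift-invariant random variable $\underline{X}$ then deliver $\gamma \leq \Exp \underline{X} + \varepsilon + o_N(1)$; letting $N\to\infty$ and then $\varepsilon\to 0$ completes the step. Combining the two bounds forces almost sure convergence to a limit $X$ with $\Exp X = \gamma$; $L^1$ convergence follows from the pointwise lower bound $X_{0,n}/n \geq -c$ via Scheff\'e's lemma, and under the ergodicity hypothesis each $\mathcal{I}_n$ is trivial, so $X$ equals $\gamma$ deterministically.

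The main obstacle is the lower-bound step, specifically the bookkeeping for the truncation parameter $N$ and the verification that $\underline{X}$ is shift-invariant. Kingman's original proof sidestepped this by imposing full joint stationarity of the two-parameter array $(X_{m,n})$, which is strictly stronger than (ii)--(iii). Liggett's contribution is to show that the split hypotheses above suffice, provided the stopping-time decomposition is organized so that the saturated-block event $\{\tau=N\}$ contributes negligibly as $N\to\infty$; making this precise requires combining Birkhoff applied to the $\tau$-block process with a dominated convergence argument on $N$, which is the delicate technical core of the proof.
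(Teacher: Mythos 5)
First, a point of reference: the paper does not prove this statement. It is quoted verbatim as Liggett's improvement of Kingman's subadditive ergodic theorem, with a citation to \cite{liggett1985improved}, and is then used as a black box in Appendix~\ref{sec:appendix}. So there is no in-paper argument to compare against; what you have written is an attempt to reprove a known result from the literature. Your outline is indeed the standard one for exactly this version of the theorem: Fekete's lemma for the means, Birkhoff's theorem along the stationary $n$-block processes from hypothesis (iii) to bound $\limsup_m m^{-1}X_{0,m}$ above by $n^{-1}\Exp[X_{0,n}\mid\mathcal I_n]$, and a truncated stopping-time decomposition to show $\Exp[\liminf_n n^{-1}X_{0,n}]\ge\gamma$. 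At that level the strategy is correct.

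There is, however, one concrete error that infects the second half of your sketch: hypothesis (iv) bounds the \emph{expectation}, $\Exp X_{0,n}\ge -cn$, not the random variable itself, so neither of your claims ``$\underline X\ge -c$ a.s.'' nor ``the pointwise lower bound $X_{0,n}/n\ge -c$'' is available. The first claim is what makes your stopping time $\tau=\min\{n\ge 1: X_{0,n}/n\le \underline X+\varepsilon\}\wedge N$ meaningful; without it one must first rule out $\underline X=-\infty$ and work with the truncation $\underline X\vee(-M)$, sending $M\to\infty$ only at the end --- this is an essential layer of Liggett's argument, not bookkeeping. The second claim is what you feed into Scheff\'e's lemma for the $L^1$ convergence; the correct route is to use $X_{0,n}/n\le n^{-1}\sum_{i=0}^{n-1}X_{i,i+1}$, whose right-hand side converges in $L^1$ by Birkhoff applied to the stationary integrable sequence $(X_{i,i+1})$, to get uniform integrability of the positive parts, and then combine almost sure convergence with $\Exp X_{0,n}/n\to\gamma=\Exp X$. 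A second gap: you write $\tau\circ\theta^{T_j}$ and $\underline X\circ\theta^{T_j}$, but hypotheses (ii)--(iii) are purely distributional and supply no measure-preserving shift $\theta$ on the underlying space; one has to argue with the distributional identities directly (e.g.\ showing $\liminf_n X_{m,m+n}/n$ is a.s.\ independent of $m$ by combining subadditivity with equality in law and integrability). You correctly flag this as the delicate core, but flagging it is not the same as resolving it, and together with the unjustified pointwise bound it leaves the lower-bound step incomplete as written. In the paper's actual application these issues are invisible because there $|X_{0,n}|\le cn$ holds pointwise, but the theorem as stated is strictly more general.
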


Define $X_{m,n} = \rectAction \left( (mt, mx), (nt, nx) \right)$ for $0 \leq m < n$.  
Then, by Theorem~\ref{Kingman}, we see that 
\[
n^{-1}X_{0,n} = n^{-1} \rectAction (tn, xn)
\]
tends almost surely to a limit, $\rectShape(t,x)$. The conditions of Theorem~\ref{Kingman} are easy check, only the ergodicity condition requires some explanation, which is given in the next lemma:

\begin{lemma}
Let $k\ge 1$. Then for all $h\ge 1$ and $m>2ht+1$, the random vectors
\[\left( X_{nk, (n+1)k}, \quad 1\le n\le h \right)\quad\text{ and }\quad\left( X_{nk, (n+1)k}, \quad m\le n\le m+h-1 \right)\]
are independent.

In particular, the process $\left( X_{nk, (n+1)k}, n\ge 1 \right)$  is strong mixing, thus, ergodic. 
\end{lemma}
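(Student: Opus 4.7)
The plan is to identify explicitly which coordinates of the random environment $(B,F)$ each block variable $X_{nk,(n+1)k}$ depends on, and to verify that these coordinate sets are disjoint for the two index blocks once $m>2ht+1$.

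First, I would observe that $X_{nk,(n+1)k}=\rectAction((nkt,nkx),((n+1)kt,(n+1)kx))$ is measurable with respect to the $\sigma$-algebra generated by $B$ restricted to the time window
\[
T_n=\{nkt+1,\ldots,(n+1)kt\}
\]
together with $F$ restricted to the spatial window
\[
S_n=\bigcup_{\gamma\in\Gamma((nkt,nkx);((n+1)kt,(n+1)kx))}\range(\gamma).
\]
A short calculation for lazy walks, combining the constraints $|y-nkx|\le s-nkt$ and $|y-(n+1)kx|\le (n+1)kt-s$ on any admissible path passing through height $y$ at time $s\in[nkt,(n+1)kt]$, shows that $S_n$ is contained in the closed interval of length $kt$ centered at $k(2n+1)x/2$.

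Next I would verify the disjointness of the two pairs of coordinate sets. The time-index union $\bigcup_{n=1}^{h}T_n=\{kt+1,\ldots,(h+1)kt\}$ is disjoint from $\bigcup_{n=m}^{m+h-1}T_n=\{mkt+1,\ldots,(m+h)kt\}$ whenever $m\ge h+2$. For the spatial indices, assume $x>0$ (the case $x<0$ is symmetric). Then $\bigcup_{n=1}^{h}S_n$ lies to the left of $k((2h+1)x+t)/2$, while $\bigcup_{n=m}^{m+h-1}S_n$ lies to the right of $k((2m+1)x-t)/2$, so spatial disjointness reduces to the inequality $(m-h)x>t$. Since $|x|\ge 1$ and $t\ge 1$, both $m\ge h+2$ and $(m-h)x>t$ follow comfortably from $m>2ht+1$.

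Finally, since $B=(B(i))_{i\in\Z}$ and $F=(F(y))_{y\in\Z}$ are two mutually independent families of i.i.d.\ random variables, disjoint restrictions of $B$ (and separately of $F$) generate independent $\sigma$-algebras, and this yields the stated independence of the two random vectors. The ``in particular'' claim then follows at once: for any pair of cylinder events in the process $(X_{nk,(n+1)k})_{n\ge 1}$, the independence above applies for all sufficiently large shifts, giving exact factorization of the probabilities, which is stronger than strong mixing; ergodicity is then immediate. The only delicate point is the geometric bookkeeping for $S_n$, but once a clean interval enclosing all lazy walks between the two endpoints is in hand, the rest is a straightforward inequality check and a standard independence argument.
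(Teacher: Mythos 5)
Your proof is correct and follows essentially the same route as the paper: identify the time window of $B$ and the spatial window of $F$ on which each block variable $X_{nk,(n+1)k}$ depends, check that the two unions of windows are disjoint under $m>2ht+1$ (using $|x|\ge 1$, $t\ge 1$), and conclude independence from the mutual independence of the i.i.d.\ families, with strong mixing and ergodicity following for cylinder events. Your spatial bookkeeping (the interval of length $kt$ centered at $k(2n+1)x/2$) is in fact slightly sharper than the paper's cruder one-sided bounds, but the argument is the same.
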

\begin{proof}
By symmetry, we may assume that $x>0$. 

On the one hand, 
$\left( X_{nk, (n+1)k},\quad 1\le n\le h \right)$ only depends on the random variables $B(i)$, where $ i\le (h+1)kt$ and $F(y)$, where $ y\le xk+hkt$. 

On the other hand, $\left( X_{nk, (n+1)k},\quad m\le n\le m+h-1 \right)$  only depends on the random variables $B(i)$ where $ i\ge mkt$ and $F(y)$ where $ y\ge mkx-hkt$. 

Thus, by the choice of $m$,  $(\left( X_{nk, (n+1)k}, 1\le n\le h \right))$ and $\left( X_{nk, (n+1)k}, m\le n\le m+h-1 \right)$ depend on disjoint sets of independent random variables, thus, they are independent.
\end{proof}

For any $(t,x)\in \cone\cap \mathbb{Z}^2$ and $k \in \N$, the homogeneity property~\eqref{eq:homogeneity} is immediate. 

Subadditivity of $\Psi$ in $\cone \cap \Z^2$ follows by taking limits in~\eqref{eqsuperadditivity}.
Given an arbitrary path from the origin to $(nt,nx) \in \cone \cap \Z^2$, the expectation of the action of the path is $0$, implying that the minimum of action over all such paths, $X_{0,n}$, has expectation smaller than or equal to $0$. Thus, $\rectShape(t,x) = \lim_{n \to \infty} n^{-1} \Exp X_{0,n} \le 0$.

Combining this with subadditivity gives a form of monotonicity for $(t_1, x_1), (t_2, x_2) \in \cone \cap \Z^2$ for $\rectShape$: 
\begin{align}
	\label{eq:shape_bound_superadditive}
	\rectShape(t_1, x_1) \geq \rectShape(t_1 + t_2, x_1 + x_2).  
\end{align}

Now, suppose $(t,x) \in \cone \cap \Q^2$.  We can take any choose $q \in \N$ such that $(qt, qx) \in \Z^2$, and define
\[
	\rectShape(t,x) = q^{-1} \rectShape(qt,qx).
\]
This definition is independent of the choice of $q$ by homogeneity for integers.  With this definition, we have homogeneity for rational $k > 0$, subadditivity for points in $\cone \cap \Q^2$, as well as the monotonicty property \eqref{eq:shape_bound_superadditive} for points in $\cone \cap \Q^2$.  

Next, we prove relation \eqref{eq:LLN_shape_function} for rational points $(t,x) \in \cone \cap \Q^2$. Without loss of generality, let us assume that $x\ge 0$. Given any $n \in \N$, write $n = mq + r$, $r \in \{0, \ldots, q-1 \}$.  

An elementary argument gives that
\[(\round{nt} - mqt,\round{nx} - mqx)\in \cone\text{ and }(  (m+1)qt-\round{nt},  (m+1)qx-\round{nx})\in \cone.\]
Then by \eqref{eq:action_bound}, we have 
\[
	 \rectAction(mqt, mqx) + c(\round{nt} - mqt)\ge \rectAction( \round{nt}, \round{nx} )  \ge  \rectAction((m+1)qt, (m+1)qx) - c((m+1)qt-\round{nt}) 
\]
and thus, dividing by $n$ and taking $ n \to \infty$, we obtain  \eqref{eq:LLN_shape_function} for rational points.

Finally, as in \cite{seppalainen2009lecture}, for all $(t,x) \in \cone$, extend  
\begin{align}\label{extendrecshape}
	\rectShape(t,x) = \inf\big\{\rectShape(u,v)\,:\,(u,v) \in \cone \cap \Q^2, (t-u, x-v) \in \cone\big\}.
\end{align}
This agrees with the case of rational $(t,x)$ due to \eqref{eq:shape_bound_superadditive}.  
  
We already established subadditivity for rational points in $\cone$, and it straightforwardly follows from the definition~\eqref{extendrecshape} that it extends to all points of $\cone$. Since $\rectShape$ is at most $0$, we also obtain \eqref{eq:shape_bound_superadditive}  using subadditivity. 

For arbitrary $(t,x) \in \cone$, it is easy to see that we have homogeneity for rational $k > 0$. For general $k>0$, we choose arbitrary $k_1,k_2\in\Q$ satisfying $k_1<k<k_2$, use \eqref{eq:shape_bound_superadditive} to write
\[
	\rectShape(k_1 t, k_1 x ) \geq \rectShape(kt, kx) \geq \rectShape(k_2 t, k_2 x).
\]
Then we use homogeneity for $k_1$ and $k_2$ finally take limits as $k_1, k_2 \to k$.  

  Convexity follows from superadditivity and homogeneity. 

A finite convex function on an open set is continuous, so that we have continuity in $\cone^\circ$, the interior of $\cone$.

To compute the limit \eqref{eq:LLN_shape_function} for general $(t,x) \in \cone^\circ$, we use rational approximations.  Let $t_1,t_2,x_1,x_2$ be rationals such that
\begin{align*}
&0 < t_1 < t < t_2,
\\
&x_1 < x < x_2,
\\
&(t_1, x_1),(t_2, x_2) \in \cone,
\\
&2(x - x_1) \leq (t - t_1),
\\
&2(x_2 - x) \leq (t_2 - t).
\end{align*}
For large enough $n$, we have $(\round{nt} - \round{nt_1}, \round{nx} - \round{nx_1}) \in \cone,\, (\round{nt_2} - \round{nt}, \round{nx_2} - \round{nx})\in \cone$.  
Then we can use subadditivity to write
\begin{align*}
	\rectAction(\round{nx_1}, \round{ny_1}) + c(\round{nx} - \round{nx_1})\ge \rectAction(\round{nx}, \round{ny})  \geq \rectAction(\round{nx_2}, \round{ny_2}) - c(\round{nx_2} - \round{nx}).
\end{align*}

Dividing by $n$ and taking the limit, we obtain 
\begin{align*}
	\rectShape(t_1, x_1) + c(t-t_1)\geq \limsup_{n \to \infty} \rectAction(\round{nt}, \round{nx})\geq \liminf_{n \to \infty} \rectAction(\round{nt}, \round{nx})  \geq \rectShape(t_2, x_2) - c(t_2 - t).
\end{align*}

Continuity of $\rectShape$ on $\cone^\circ$ implies that we can take $(t_1, x_1)$ and $(t_2, x_2)$ to $(t, x)$ to obtain \eqref{eq:LLN_shape_function} for general $(t,x) \in \cone^\circ$.  

Finally consider a boundary point  $(t,\pm t)$ of $\cone$. Then $\Gamma(\round{nt},\pm\round{nt})$ contains a single path, with expected action $0$. Thus, it is easy to see that \eqref{eq:LLN_shape_function} holds in this case too. 
Therefore, we have \eqref{eq:LLN_shape_function} on all of $\cone$. \epf

\end{appendix}

\bigskip

\textbf{Acknowledgements}

 The authors are grateful to the anonymous referees for their useful comments.  Yuri Bakhtin is grateful to NSF for partial support through grants  DMS-1811444, DMS-2243505, DMS-1460595. Konstantin Khanin was partially supported by the NSERC Discovery Grant RGPIN-2018-04510. Andr\'as M\'esz\'aros was supported by the NSERC discovery grant of B\'alint Vir\'ag, the KKP 139502 project, the Dynasnet European Research Council Synergy project -- grant number ERC-2018-SYG 810115, and the NKKP-STARTING 150955 project.


\bibliographystyle{plain}
\bibliography{references}       
\end{document}